\newtheorem{theorem}{Theorem}[section]
\crefname{theorem}{theorem}{theorems}
\Crefname{Theorem}{Theorem}{Definitions}
\newtheorem{definition}{Definition}[section]
\crefname{definition}{definition}{definitions}
\Crefname{Definition}{Definition}{Definitions}
\newtheorem{corollary}[theorem]{Corollary}
\crefname{corollary}{corollary}{corollaries}
\Crefname{Corollary}{Corollary}{Corollaries}
\newtheorem{proposition}[theorem]{Proposition}
\crefname{proposition}{proposition}{propositions}
\Crefname{Proposition}{Proposition}{Propositions}
\newtheorem{lemma}[theorem]{Lemma}
\crefname{lemma}{lemma}{lemmas}
\Crefname{Lemma}{Lemma}{Lemmas}
\crefname{example}{example}{examples}
\Crefname{example}{Example}{Examples}
\crefname{remark}{remark}{remarks}
\Crefname{remark}{Remark}{Remarks}
\newtheorem{assumption}{Assumption}[section]
\crefname{assumption}{assumption}{assumptions}
\Crefname{assumption}{Assumption}{Assumptions}
\crefname{enumi}{point}{point}
\Crefname{enumi}{Point}{Point}
\theoremstyle{remark}
\newcommand{\ceil}[1]{{\left\lceil #1 \right\rceil}}
\newcommand{\norm}[1]{{
    \left\| #1 \right\|}} 
\newcommandx\cspanarg[2][1=]{\ensuremath{\overline{\mathrm{Span}}^{#1}\left(#2\right)}}
\newcommand{\spec}{{\rm \spec}}
\newcommand{\tnorm}[1]{{\left\vert\kern-0.25ex\left\vert\kern-0.25ex\left\vert #1 
    \right\vert\kern-0.25ex\right\vert\kern-0.25ex\right\vert}} 
\newcommandx{\fiarmapred}[1][1={\mapol,\arpol,D}]{\mathrm{\Phi}^{\dagger}_{#1}}
\newcommandx{\fiarmapredcoef}[2][1={\mapol,\arpol,D}]{\aop^{\dagger}_{#2}\lr{#1}}
\newcommand{\smallo}[1]{o \left( #1 \right)}
\newcommand{\bigo}[1]{\mathcal{O} \left( #1 \right)}
\def\rset{\mathbb{R}}
\def\zset{\mathbb{Z}}
\def\nset{\mathbb{N}}
\DeclareSymbolFontAlphabet{\mathbbm}{bbold}
\DeclareSymbolFontAlphabet{\mathbb}{AMSb}%
\def\mapol{\bbtheta}
\def\arpol{\bbphi}
\def\rme{\mathrm{e}}
\newcommandx{\aslim}[1]{\ensuremath{\stackrel{#1\text{a.s.}}{\longrightarrow}}}  
\newcommand{\1}{\mathbbm{1}}
\def\bX{\mathbf{X}}
\def\eqsp{\;}
\newcommand{\aop}{\mathrm{P}}
\def\cF{\mathcal{F}}
\def\cE{\mathcal{E}}
\def\cB{\mathcal{B}}
\def\cH{\mathcal{H}}
\def\cG{\mathcal{G}}
\def\Xset{\mathsf{X}}
\def\Yset{\mathsf{Y}}
\newcommand{\pscal}[2]{\left\langle #1, #2 \right\rangle}
\def\b0{{\bf 0}}
\def\bX{{\bf X}}
\def\cA{\mathcal{A}}
\def\cB{\mathcal{B}}
\def\cC{\mathcal{C}}
\def\cE{\mathcal{E}}
\def\cF{\mathcal{F}}
\def\cG{\mathcal{G}}
\def\cH{\mathcal{H}}
\def\cL{\mathcal{L}}
\def\cR{\mathcal{R}}
\def\cS{\mathcal{S}}
\newcommand{\argmin}{\mathop{\mathrm{argmin}}}
\newcommand\algo[1]%
\newcommand{\Ysigma}{\ensuremath{\mathcal{Y}}}
\newcommand{\chunk}[4][]%
{\ifthenelse{\equal{#1}{}}{\ensuremath{{#2}_{#3:#4}}}{\ensuremath{#2^#1}_{#3:#4}}
}
\def\esp{\mathbb{E}}
\newcommandx\prob[2][1=,2=]{\ensuremath{{\mathbb P}_{#1}^{#2}}}
\newcommand{\PP}[1][]{\ifthenelse{\equal{#1}{}}{\ensuremath{\mathbb{P}}}{\ensuremath{\mathbb{P}\left( #1 \right)}}}
\newcommandx{\PParg}[2][1=]{\PP_{#1}\left(#2\right)}
\newcommand{\PE}[1][]{\ifthenelse{\equal{#1}{}}{\esp}{\ensuremath{{\mathbb E}\left[ #1 \right]}}}
\newcommandx{\PEarg}[2][1=]{\PE_{#1}\left[#2\right]}
\newcommand{\PVar}{\ensuremath{\operatorname{Var}}}
\newcommandx\var[2][1=]{\ensuremath{\PVar_{#1}\left( #2\right)}}
\newcommandx\cvar[3][1=]{\ensuremath{\PVar_{#1}\left( \left. #2 \right| #3 \right)}}
\newcommandx\cov[3][1=]{\ensuremath{\mathrm{Cov}_{#1}\left( #2,#3 \right)}}
\newcommandx\ccov[3][1=]{\ensuremath{\mathrm{Cov}_{#1}\left( \left. #2 \right| #3 \right)}}
\newcommand{\CPP}[3][]
{\ifthenelse{\equal{#1}{}}{\PP\left(\left. #2 \, \right| #3 \right)}{\mathbb{P}_{#1}\left(\left. #2 \, \right | #3 \right)}}
\newcommand{\CPE}[3][]
{\ifthenelse{\equal{#1}{}}{\PE\left[ \left. #2 \right| #3
    \right]}{\mathbb{E}_{#1} \left[ \left. #2 \right| #3 \right]}}
\newcommandx\cprob[4][1=,2=]{\ensuremath{\PP_{#1}^{#2}\left( \left. #3 \right|
      #4 \right)}}
\newcommandx\HMCP[2][1=]{
\ifthenelse{\equal{#1}{}}{\PP_{#2}}{\PP_{#1,#2}}
}
\newcommandx\HMCE[2][1=]{
\ifthenelse{\equal{#1}{}}{\PE_{#2}}{\PE_{#1,#2}}
}
\newcommandx\HMCEarg[3][1=]{
\ifthenelse{\equal{#1}{}}{\PE_{#2}\left[#3\right]}{\PE_{#1,#2}\left[#3\right]}
}
\def\loikhi2{\mathbf{\chi^2}}
\newcommandx{\proj}[2]{\ensuremath{\operatorname{proj}\left( \left. #1\right|#2\right)}}
\newcommand{\URoot}{\ensuremath{R}}
\newcommand{\UCov}[1][]%
{%
\ifthenelse{\equal{#1}{}}{\URoot \URoot^t}{\URoot_{#1} \URoot^t_{#1}}%
}
\newcommand{\VRoot}{\ensuremath{S}}
\newcommand{\VCov}[1][]%
{%
\ifthenelse{\equal{#1}{}}{\VRoot \VRoot^t}{\VRoot_{#1} \VRoot^t_{#1}}%
}
\newcommand{\LDX}[2]{\ensuremath{L}}
\newcommand{\postdx}[3][]%
{%
\ifthenelse{\equal{#1}{}}{\ensuremath{\psi_{#2|#3}}}{\ensuremath{\psi_{#1,#2|#3}}}%
}
\newcommand{\epostdx}[3][]%
{%
\ifthenelse{\equal{#1}{}}{\ensuremath{\hat{\psi}_{#2|#3}}}{\ensuremath{\hat{\psi}_{#1,#2|#3}}}%
}
\newcommandx{\predx}[3][1=\bX]{#1_{#2|#3}}   
\newcommand{\predpx}[3][]%
{%
\ifthenelse{\equal{#1}{}}{\ensuremath{\varphi_{#2|#3}}}{\ensuremath{\varphi_{#1,#2|#3}}}%
}
\newcommandx\cesp[4][1=,2=]{\ensuremath{{\mathbb E}_{#1}^{#2}\left[ \left. #3 \right| #4 \right]}}
\newcommand{\filt}[2][]%
{%
\ifthenelse{\equal{#1}{}}{\ensuremath{\phi_{#2}}}{\ensuremath{\phi_{#1,#2}}}%
}
\newcommand{\pred}[3][]%
{%
\ifthenelse{\equal{#1}{}}{\ensuremath{\phi_{#2|#3}}}{\ensuremath{\phi_{#1,#2|#3}}}%
}
\newcommand{\post}[3][]%
{%
\ifthenelse{\equal{#1}{}}{\ensuremath{\phi_{#2|#3}}}{\ensuremath{\phi_{#1,#2|#3}}}%
}
\newcommand{\logl}[2][]%
{%
\ifthenelse{\equal{#1}{}}{\ensuremath{\ell_{#2}}}{\ensuremath{\ell_{#1,#2}}}%
}
\newcommand{\lhood}[2][]%
{%
\ifthenelse{\equal{#1}{}}{\ensuremath{\mathrm{L}_{#2}}}{\ensuremath{\mathrm{L}_{#1,#2}}}%
}
\newcommand{\cc}[2][]%
{%
\ifthenelse{\equal{#1}{}}{\ensuremath{c_{#2}}}{\ensuremath{c_{#1,#2}}}%
}
\newcommand{\forvar}[2][]%
{%
\ifthenelse{\equal{#1}{}}{\ensuremath{\alpha_{#2}}}{\ensuremath{\alpha_{#1,#2}}}%
}
\newcommand{\nforvar}[2][]%
{%
\ifthenelse{\equal{#1}{}}{\ensuremath{\bar{\alpha}_{#2}}}{\ensuremath{\bar{\alpha}_{#1,#2}}}%
}
\newcommand{\BK}[2][]%
{%
\ifthenelse{\equal{#1}{}}{\ensuremath{\mathrm{\mathrm{B}}_{#2}}}{\ensuremath{\mathrm{B}_{#1,#2}}}%
}
\newcommand{\filtfunc}[2][]%
{%
\ifthenelse{\equal{#1}{}}{\ensuremath{\tau_{#2}}}{\ensuremath{\tau_{#1,#2}}}%
}
\newcommand{\filtmean}[2][]
{\ifthenelse{\equal{#1}{}}{{\ensuremath{\hat{X}_{#2|#2}}}}{\ensuremath{\hat{X}_{#1,#2|#2}}}
}
\newcommand{\filtcov}[2][]
{\ifthenelse{\equal{#1}{}}{\ensuremath{\Sigma_{#2|#2}}}{\ensuremath{\Sigma_{#1,#2|#2}}}}
\newcommand{\postmean}[3][]
{\ifthenelse{\equal{#1}{}}{\ensuremath{\hat{X}_{#2|#3}}}{\ensuremath{\hat{X}_{#1,#2|#3}}}
}
\newcommand{\postcov}[3][]
{\ifthenelse{\equal{#1}{}}{\ensuremath{\Sigma_{#2|#3}}}{\ensuremath{\Sigma_{#1,#2|#3}}}}
\newcommandx{\QEM}[4][1=,4=]{\ensuremath{\mathcal{Q}_{#1}(#4;#2 \, ; #3)}}
\newcommand{\argmax}{\ensuremath{\mathop{\mathrm{argmax}}}}
\newcommandx\sequence[3][2=t,3=\zset]{\ensuremath{\left(#1_{#2}\right)_{#2 \in #3 }}}
\newcommandx\dsequence[4][3=t,4=\zset]{\ensuremath{\left( (#1_{#3}, #2_{#3})\right)_{#3 \in #4}}}
\newcommandx{\sequencen}[2][2=n\in\nset]{\ensuremath{\left(#1\right)_{#2}}}
\def\bpm{\left[\begin{matrix}}
\def\epm{\end{matrix}\right]}
\def\bma{\begin{matrix}}
\def\ema{\end{matrix}}
\newcommand{\be}{\begin{equation}}     
\newcommand{\ee}{\end{equation}}        
\newcommandx\lnorm[3][1=]{\left\lVert #2 \right\rVert^{#1}_{#3}}
\newcommandx\supnorm[2][1=]{| #2 |^{#1}_\infty}
\newcommandx\ball[3][1=]{\mathrm{B}_{#1} (#2,#3)}
\newcommandx{\prohosym}[1][1=]{{\boldsymbol\rho}_{#1}}
\newcommandx{\proho}[3][1=]{\prohosym{#1}\left(#2,#3\right)}
\newcommandx{\pp}[1][1=\mu]{\ensuremath{#1\-\mathrm{a.e.}}}
\renewcommand{\-}{\mbox{-}}
\newcommandx{\as}[1][1=\PP]{\ensuremath{#1\-\mathrm{a.s.}}}
\newcommandx{\oscnorm}[3][1=,3=]{\operatorname{osc}^{#1}_{#3}\left(#2\right)}
\newcommandx{\tvdist}[3][1=]{\ensuremath{d^{#1}_{\mathrm{TV}}}(#2,#3)}
\newcommandx{\VnormFunc}[3][1=]{\ensuremath{\left|#2\right|_{\mathrm{#3}}^{#1}}}
\newcommand{\continuousfunctionset}[1]{\mathrm{C}_b(#1)}
\newcommand{\lipschitzfunctionset}[1]{\mathrm{Lip}(#1)}
\newcommand{\boundedlipschitzfunctionset}[1]{\mathrm{Lip}_b(#1)}
\newcommandx\functionsetarg[2][1=]{
\ifthenelse{\equal{#1}{c}}{\continuousfunctionset{\mathsf{#2}}}
{\ifthenelse{\equal{#1}{bc}}{\mathrm{C}_b(#2)}
{\ifthenelse{\equal{#1}{u}}{\mathrm{U}(#2)}
{\ifthenelse{\equal{#1}{bu}}{\mathrm{U}_b(#2)}
{\ifthenelse{\equal{#1}{l}}{\lipschitzfunctionset{#2}}
{\ifthenelse{\equal{#1}{bl}}{\boundedlipschitzfunctionset{#2}}
{\mathbb{F}_{#1}(#2)}
}}}}}}
\newcommandx\functionsetspec[1][1=]{
\ifthenelse{\equal{#1}{c}}{\mathrm{C}}
{\ifthenelse{\equal{#1}{bc}}{\mathrm{C}_b}
{\ifthenelse{\equal{#1}{u}}{\mathrm{U}}
{\ifthenelse{\equal{#1}{bu}}{\mathrm{U}_b}
{\ifthenelse{\equal{#1}{l}}{\mathrm{Lip}}
{\ifthenelse{\equal{#1}{bl}}{\mathrm{Lip}_b}
{\mathbb{F}_{#1}}
}}}}}}
\newcommandx{\taboo}[3][1=,3=]{\left(\leftidx{_#1}{#2}{}\right){^{#3}}}
\newcommandx\vectornorm[2][1=]{\left| #2 \right|^{#1}}  
\newcommand{\ensemble}[2]{\left\{#1\,:\eqsp #2\right\}}
\newcommand{\set}[2]{\ensemble{#1}{#2}}
\newcommandx{\plim}[1]{\ensuremath{\stackrel{#1\-\text{prob}}{\longrightarrow}}}
\newcommandx{\dlim}[1]{\ensuremath{\stackrel{#1}{\Longrightarrow}}}
\newcommandx\measureset[3][1=\mathrm{s},3=]{\mathbb{M}^{#3}_{#1}(#2)}
\newcommandx\measuresetmetric[2][1=1]{\mathbb{M}_{#1}(\mathcal{B}(\mathsf{#2}))}  
\newcommandx\measuresetspec[1][1=\mathrm{s}]{\mathbb{M}_{#1}}
\newcommand{\abs}[1]{\left\vert #1 \right\vert}
\newcommandx\canonicalkernel[1][1=P]{\mathbb{K}_{#1}}
\newcommand{\lr}[1]{\left(#1 \right)}
\newcommand{\lrc}[1]{\left\{#1 \right\}}
\newcommand{\linlog}{\,\mathrm{linlog}}
\title{Power comparison of sequential testing by betting procedures.}
\begin{document}
\author[1,2]{Amaury Durand}
\author[2]{Olivier Wintenberger}
\affil[1]{\'Electricit\'e de France R\&D, Bd Gaspard Monge, 91120 Palaiseau, France. }
\affil[2]{Sorbonne Universit\'e, 4 Place Jussieu, 75005 Paris, France. }

\maketitle
\begin{abstract}
In this paper, we derive power guarantees of some sequential tests for bounded mean under general alternatives. We focus on testing procedures using nonnegative supermartingales which are anytime valid and consider alternatives which coincide asymptotically with the null (e.g. vanishing mean) while still allowing to reject in finite time. Introducing variance constraints, we show that the alternative can be broaden while keeping power guarantees for certain second-order testing procedures. We also compare different test procedures in multidimensional setting using characteristics of the rejection times. Finally, we extend our analysis to other functionals as well as testing and comparing forecasters.   
Our results are illustrated with numerical simulations including bounded mean testing and comparison of forecasters. 
\end{abstract}

\section{Introduction}
Safe anytime valid testing provides tests that remain valid at all stopping times thus allowing for optional stopping or continuation. This property guarantees that we can collect data sequentially and decide to reject or not the null $\cH_0$ at each time step without compromising the level of the test. More precisely,  given a level $\alpha\in (0,1)$, a safe anytime valid test provides a decision in the form of a  \emph{rejection time} $\tau_\alpha \in \nset\cup\{+\infty\}$ satisfying
\begin{equation}\label{eq:reject-level}
\PP[\tau_\alpha < +\infty] \leq \alpha ,\; \text{under } \cH_0\,,
\end{equation}
hence controlling the level of the test. Another desirable property is that the test is of \emph{power one} under an appropriate alternative $\cH_1$, namely 
\begin{equation}\label{eq:power-one}
\PP[\tau_\alpha < +\infty] = 1 ,\; \text{under } \cH_1\,. 
\end{equation}
In a parametric context, this type of test has been constructed using likelihood ratio sequences (see e.g. \cite{Wald45,Wald_optimum}).
Generalizations to non-parametric cases were considered in \cite{Darling_powerone} and more recently using the notions of test supermartingales \cite{Shafer11_test_martingales} or e-processes \cite{Grunwald19-safe-testing}. Given a probability space $(\Omega, \cF,\PP)$ endowed with a filtration $(\cF_t)_{t\in\nset}$, we recall the definition of a test supermartingale. 
\begin{definition}\label{def:test-supermartingale}
A test supermartingale for a null hypothesis $\cH_0$ is a process $(W_t)_{t\in\nset}$ with $W_0 = 1$ and such that, for all $t\geq 1$, $W_t \geq 0$ and $\CPE{W_t}{\cF_{t-1}} \leq W_{t-1}$ under $\cH_0$. If the last inequality is an equality we say that the process is a test martingale. 
\end{definition}

\cite{Shafer21-test-betting} developed a nice betting interpretation for test supermartingales. Namely, starting with a capital (or wealth) of $W_0 = 1$, we bet against the null hypothesis and observe how the wealth $(W_t)_{t\in\nset}$ evolves over time. A test supermartingale indicates that we expect to loose under the null and a test martingale indicates that we are in a fair game. 
 Given an appropriate betting strategy, our capital should grow if we accumulate enough evidence against the null and decrease otherwise. A consequence of Ville's theorem \cite{ville1939etude} is that the rejecting time 
\begin{equation}\label{eq:def-tau}
\tau_\alpha := \inf\set{t\in\nset}{W_t \geq 1/\alpha}\,,
\end{equation}
satisfies \eqref{eq:reject-level} if $(W_t)_{t\in\nset}$ is a nonnegative supermartingale. In the following, we will focus on tests of the form \eqref{eq:def-tau} and assert power guarantees thanks to stochastic properties (finiteness and first-moment's  bound) of $\tau_\alpha$. 

\subsection{Related works}
The literature on safe anytime valid inference (SAVI), which includes tests and confidence sequences, has been rapidly growing in recent years and we refer the reader to \cite{Ramdas22-survey} and \cite{ramdas2024hypothesistestingevalues} for recent surveys. One of the key tools to derive safe anytime valid confidence sequences is time-uniform concentration bounds which are thoroughly studied in \cite{Howard20-chernoff}. In \cite{Howard21-CS} the authors provide a general framework to construct safe anytime confidence sequences with vanishing width using stitching methods or nonnegative martingale mixtures. In \cite{WaudbySmith20_meanbetting}, the authors construct such confidence sequences for the mean of bounded variables using betting strategies.  The case of unbounded means with bounded variances is studied in \cite{Wang23-CS-unbounded}. These ideas have been extended to the estimation of other quantities than the mean. See, for example, \cite{Howard22-quantileCS} for quantiles, \cite{Manole_2023} for the estimation of convex divergences between two distributions and \cite{Choe21-comparing-forecasters} for the average score difference between two forecasters. 
There is a strong link between safe anytime valid confidence sequences and tests since, to test a null stating that the quantity of interest is equal to $\mu$, one can reject the null as soon as $\mu$ is not in the confidence sequence. Note that this test is however not of the form \eqref{eq:def-tau}. Other contributions propose tests of the form \eqref{eq:def-tau} based on test supermartingale or e-processes. For example, the confidence sequences derived in \cite{WaudbySmith20_meanbetting,Wang23-CS-unbounded} rely on test supermartingales and \cite{Choe21-comparing-forecasters} also propose an e-process to test whether a forecaster outperforms another one on average thus weakening the null hypothesis of \cite{Henzi21-comparing} which tests if one forecaster always outperforms another one. Other works provide tests for a large set of tasks including elicitable and identifiable functionals \cite{cassgrain22-anytime}, forecast calibration \cite{Arnold21-calibration}, Value-at-Risk and Expected Shortfall backtesting \cite{Wang22Ebacktesting}, equality in distribution of two samples \cite{Shekhar21}, testing if the data are drawn i.i.d. from a log-concave distribution \cite{Gangrade23-logconcavity} or exchangeability in the data \cite{RAMDAS22-exchangeability}.

\subsection{Predictable plug-in test supermartingales}\label{sec:plugin}
Given a collection $\set{(L_t(\lambda))_{t\in\nset}}{\lambda\in\Lambda}$ of test supermartingales  for some set $\Lambda\subset\rset^d$, one can show that, 
for any predictable sequence $(\lambda_t)_{t\geq 1}$ valued in $\Lambda$ (referred to as the \emph{betting strategy}), the process defined by $W_0 = 1$ and 
$$
W_t = \prod_{i=1}^{t} \frac{L_i(\lambda_i)}{L_{i-1}(\lambda_i)}\,, \quad t \geq 1 \,,
$$
is also a test supermartingale known as a \emph{predictable plug-in} test supermartingales. This holds also for predictable mixtures, see \cite[Lemma~2.4]{cassgrain22-anytime}. Different strategies to tune the sequence of parameters $(\lambda_t)_{t\geq 1}$ provide different guarantees. For example, in \cite{WaudbySmith20_meanbetting}, the parameters are tuned to control the width of the confidence sequence. Other works use the GRO criterion of \cite{Grunwald19-safe-testing} and select the parameter $\lambda_{t+1}$ which maximizes the growth rate 
\begin{equation}\label{eq:GRO}
\CPE[Q]{\log \frac{L_{t+1}(\lambda)}{L_{t}(\lambda)}}{\cF_{t}}\,,
\end{equation}
for some appropriate distribution $Q$. This growth rate is closely linked to the notion of $e$-power introduced in \cite{vovk_wang_2024}. Intuitively, maximizing the the growth rate should provide optimal power guarantees under the alternative $Q$ and thus it is an appropriate notion of power (see also \cite[Section~2.7]{ramdas2024hypothesistestingevalues} for more formal justification). The problem lies in the choice of $Q$ which should rely on a priori assumptions on the alternative. In our work, we avoid chosing $Q$ by taking the empirical distriution. This is related to the GREE method of \cite{Wang22Ebacktesting} and the GRAPA method of \cite{WaudbySmith20_meanbetting} and is used, for example, in \cite{cassgrain22-anytime}. Hence, we select $\lambda_{t+1}$ by maximizing 
\begin{equation}\label{eq:GREE}
\sum_{i=1}^{t}\log \frac{L_i(\lambda)}{L_{i-1}(\lambda)}  = \log L_{t}(\lambda)
\end{equation}
using an Online Convex Optimization  (OCO) method \cite{hazan2022introduction}, assuming that $\lambda \mapsto \log\frac{L_i(\lambda)}{L_{i-1}(\lambda)}$ is concave, as suggested in \cite{cassgrain22-anytime}.  This means that we do not necessarily take $\lambda_t$ as a maximizer of \eqref{eq:GREE}, which is known as the Follow The Leader (FTL) algorithm, but are interested in using a strategy that provides guarantees on how $W_t$ grows. Typically, this is achieved by controlling the \emph{regret} which is well studied in the OCO literature and writes, in our context, as 
$$
\max_{\lambda\in\Lambda} \log L_t(\lambda) - \log W_t.
$$
The idea behind this strategy is that, if we manage to upper bound the regret of the predictable updates $(\lambda_t)_{t\geq 1}$ of an OCO algorithm, we get a lower bound on $\log W_t$ which can be used to provide guarantees on $\tau_\alpha$ defined by \eqref{eq:def-tau} under an appropriate alternative. 

\subsection{Deriving power guaranteees}
Among the SAVI literature, some works provide theoretical power guarantees which can take three different forms: asymptotic power as in \eqref{eq:power-one}, an asymptotic growth rate for $\log W_n$ or a bound on  $\PE[\tau_\alpha]$ under some alternative. Each of these three power guarantees is more informative than the previous. The most common power guarantee is the asymptotic power, see e.g. \cite{Wang22Ebacktesting}, \cite{Shekhar21}, \cite{cassgrain22-anytime}, \cite{pandeva2023deepanytimevalidhypothesistesting}. However in specific cases, growth rates for $\log W_n$ can be obtained, see e.g. \cite{podkopaev2023sequentialpredictivetwosampleindependence}, \cite{saha24bexchangeability}, \cite{podkopaev23akernelizeiindependence} and even finite bounds for $\PE[\tau_\alpha]$, see e.g. \cite{RobbinsLIL}, \cite{Robbins74expectedsamplesize}, \cite{Shekhar21}, \cite{Chugg23testfairness}. In general, more informative power guarantees are derived under more restrictive alternatives. For example, in \cite{Shekhar21}, the authors provide bounds for $\PE[\tau_\alpha]$ in the i.i.d. case while showing only asymptotic power in a time-varying setting.
In the present work, we show that, for some well constructed test supermartingales such garantees can be obtained under relatively large alternatives. To do so, we rely on a simple (yet often implying tedious calculation) methodology. Namely under a given alternative, we derive a deterministic lower bound for $\log W_n$ and evaluate when this lower bound eventually reaches the desired threshold $\log(1/\alpha)$ thus providing the three aforementioned power guarantees using the following lemma.

\begin{lemma}\label{lem:power_from_bound}
Let $(W_n)_{n\geq 1}$ and $(u_n)_{n\geq 1}$ be respectively be a nonnegative stochastic process and a deterministic sequence satisfying
\begin{equation}\label{eq:boundW}
\varrho := \sum_{n \geq 1} \PP[\log W_n < u_n] < +\infty \;.
\end{equation}
Then
$$
\liminf_{n\to +\infty} \frac{\log W_n}{u_n} \geq 1  \;\;\PP\text{-}a.s.
\quad \text{and} \quad 
\PE[\tau_\alpha] \leq \varrho + \aleph((u_n)_{n\geq 1}, \log(1/\alpha)) \;,
$$
where $\tau_{\alpha}$ is defined in \eqref{eq:def-tau} and we define $\aleph((u_n)_{n\geq 1},x) := \inf \set{n \geq 1}{\inf_{k\geq n} u_k \geq x}$. 
\end{lemma}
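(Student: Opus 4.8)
The plan is to draw both conclusions from the single summability hypothesis \eqref{eq:boundW}: the almost sure statement from the first Borel--Cantelli lemma, and the bound on $\PE[\tau_\alpha]$ from a direct inclusion relating the first-passage time of $(W_n)$ to the deviation events $\{\log W_n < u_n\}$. Throughout, write $x := \log(1/\alpha)$ and $n^\star := \aleph((u_n)_{n\geq 1}, x)$; by the definition of $\aleph$ this means $u_k \geq x$ for every $k \geq n^\star$. We may assume $n^\star < +\infty$, since otherwise the asserted bound is vacuous.

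For the almost sure bound, I would apply the first Borel--Cantelli lemma to the events $\{\log W_n < u_n\}$. Since their probabilities sum to $\varrho < +\infty$, almost surely only finitely many of them occur, so there is a random index $N$ such that $\log W_n \geq u_n$ for all $n \geq N$. Because $u_n \geq x > 0$ once $n \geq n^\star$, dividing by $u_n$ gives $\log W_n / u_n \geq 1$ for all large $n$, and passing to the liminf yields $\liminf_{n} \log W_n / u_n \geq 1$ $\PP$-almost surely.

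For the expectation bound, I would start from the layer-cake identity $\PE[\tau_\alpha] = \sum_{n\geq 1}\PP[\tau_\alpha \geq n]$, valid because $\tau_\alpha$ takes values in $\{1,2,\dots\}\cup\{+\infty\}$ (indeed $W_0 = 1 < 1/\alpha$ forces $\tau_\alpha \geq 1$). The crux is the inclusion $\{\tau_\alpha \geq n\} \subseteq \{\log W_{n-1} < u_{n-1}\}$ for every $n \geq n^\star + 1$: on $\{\tau_\alpha \geq n\}$ one has $W_{n-1} < 1/\alpha$, hence $\log W_{n-1} < x$, while $n - 1 \geq n^\star$ gives $u_{n-1} \geq x$, so that $\log W_{n-1} < u_{n-1}$. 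Splitting the sum at $n^\star$, the first $n^\star$ terms are each at most $1$, contributing at most $n^\star$, and the tail satisfies $\sum_{n \geq n^\star + 1}\PP[\tau_\alpha \geq n] \leq \sum_{n\geq n^\star+1}\PP[\log W_{n-1}<u_{n-1}] = \sum_{m \geq n^\star}\PP[\log W_m < u_m] \leq \varrho$. Summing the two contributions gives $\PE[\tau_\alpha] \leq n^\star + \varrho$, the claimed inequality.

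There is no serious difficulty here beyond isolating the inclusion $\{\tau_\alpha \geq n\} \subseteq \{\log W_{n-1} < u_{n-1}\}$, which is the one genuinely load-bearing step; everything else is Borel--Cantelli and elementary series manipulation. The only minor points to keep in mind are the eventual positivity of $u_n$ (guaranteed by $u_n \geq x > 0$ for $n \geq n^\star$), needed to normalize in the liminf, and the degenerate case $n^\star = +\infty$, in which the expectation bound holds trivially.
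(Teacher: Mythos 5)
Your proof is correct and follows essentially the same route as the paper's: Borel--Cantelli for the almost sure statement, and the inclusion of $\{\tau_\alpha \geq n\}$ (equivalently $\{\tau_\alpha > n-1\}$) in the deviation event $\{\log W_{n-1} < u_{n-1}\}$ for $n-1 \geq \aleph((u_n)_{n\geq 1},\log(1/\alpha))$, followed by splitting the layer-cake sum at that index. You merely spell out the details (the split at $n^\star$, the eventual positivity of $u_n$ needed to normalize in the liminf) that the paper's one-line proof leaves implicit.
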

\begin{proof}
    The first inequality is a consequence of the Borell-Cantelli theorem and the second comes from  the relation $\PE[\tau_\alpha] = \sum_{n\geq 1} \PP[\tau_\alpha > n] 
\leq \sum_{n\geq 1} \PP[\log W_n <  \log\lr{1/\alpha}]$.
\end{proof}

We focus on showing that \eqref{eq:boundW} holds under some alternatives in a time-varying setting where the distribution of the observation changes over time and converges to the null. For example, in the setting of bounded mean testing, given a real process $(X_t)_{t\in \nset}$, one can be interested to characterize how fast $\PE[X_t]$ can vanish while still allowing our test procedure to reject the null hypothesis stating that $(X_t)_{t\in\nset}$ is centered. This is reminiscent of the notions of \emph{asymptotic power} and \emph{asymptotic relative efficiency} detailed in \cite{noether-pitman} and based on works by Pitman, where the authors are interested in characterizing the asymptotical behavior of the power of a test when the alternative converges to the null as the sample size grows.  The rate of convergence of the alternative to the null can be seen as a \emph{detection boundary} for the test procedure, as discussed in \cite[Remark~11]{Shekhar21}. Comparing the first moments of the rejection times for sequential tests with the same detection boundary is motivated in \cite{lai1978pitman}. Our context is similar but, instead of assuming i.i.d observations and an alternative converging to the null at a rate linked to some stochastic properties of the rejection times, we assume time-varying observations where the marginal distribution of the process converge to a distribution satisfying the null hypothesis. In this context we were only able to obtain upper-bounds on first moments of rejection times. Therefore we run several experiments to discuss the sharpness of our bounds and empirically challenge comparisons based on them. 
Our main results on bounded mean testing are gathered in \Cref{sec:mean-test} and some extentions are discussed in \Cref{sec:extensions}. In \Cref{sec:application} and \Cref{sec:simulation}, we provide some applications and numerical simulations. Proofs are postponed in the supplementary material.

\section{Bounded mean hypothesis testing}\label{sec:mean-test}
In this section, we study hypothesis testing for bounded means and, in particular, two types of sequential testing procedures with non-asymptotic power guarantees.  The first one relies on an exponential test supermartingale based on Hoeffding's lemma 
as proposed in \cite[Section~3.1]{WaudbySmith20_meanbetting}
and the second one corresponds to the \emph{capital process} of \cite{WaudbySmith20_meanbetting} which is also known as the wealth of \emph{coin betting} \cite{OrabonaCoinBetting2016}. 
Let $(\Omega,\cF,\PP)$ be a probability space endowed with a filtration $(\cF_t)_{t\in\nset}$. Given an $(\cF_t)_{t\in\nset}$-adapted process $(X_t)_{t\in\nset}$ valued in a subset $\Xset$ of $\rset^d$ for some $d\geq 1$, we  are interested in testing the null hypothesis. 
\begin{equation}\label{eq:null-X}
\cH_0 \;:\; \PEarg[t-1]{X_t} = 0 \;\; \PP\text{-a.s.}\, \text{ for all } t\in\nset \;, 
\end{equation}
where we use the notation $\PEarg[t-1]{\cdot}=\CPE{\cdot}{\cF_{t-1}}$. Throughout this section, we consider the following assumption.
\begin{assumption}\label{ass:bounded-y} The set $\Xset$ is bounded and we denote $D := \sup_{(x,y)\in\Xset^2} \norm{x-y}_2$ and $B := \sup_{x\in\Xset} \norm{x}_{2}$.
\end{assumption}
We also define, for $n \geq 1$ and $p\in \rset_+ \cup \lrc{\infty}$,
 \begin{equation}\label{eq:def-means}
    \mu_n := \frac{1}{n}\sum_{t=1}^n \PEarg[t-1]{X_t} 
    \quad \text{and} \quad 
    \nu_{n,p} := \frac{1}{n}\sum_{t=1}^n \PEarg[t-1]{\norm{X_t}_p^2}\,.
\end{equation}
We also denote $\cB_r^d := \set{x\in\rset^d}{\norm{x}_2 \leq r}$ for any $d \geq 1$ and $r>0$ and $\linlog(z) := z\log(z)$ for any $z>0$.
Finally, we let  $(m_n)_{n \geq 1}$ and $(v_n)_{n\geq 1}$ be two nonnegative sequences and consider the alternatives hypotheses 
\begin{equation}\label{eq:H1}
\cH_1 : \varrho_1 :=\sum_{n\geq 1} \PP[\norm{\mu_n}_2 < m_n] < +\infty \;,   
\end{equation}
and for $p \in \lrc{1,\cdots,+\infty}$,
\begin{equation}\label{eq:H2}
\cH_{2,p} : \varrho_{2,p} := \sum_{n \geq 1} \PParg{\norm{\mu_n}_{p} < m_n \text{ or } \nu_{n,p} > v_n} < + \infty \;.    
\end{equation}
In the next sections, we define the test supermartingales and provide power guarantees under $\cH_1$ or $\cH_{2,p}$. Note that these hypotheses include the i.i.d. case if $m_n$ and $v_n$ are constant but also include more general cases where the mean is allowed to vanish. As we will see in \Cref{sec:explicit-bounds-d}, the first-order hypothesis $\cH_1$  restricting the first-order moments is well suited for the Hoeffding test supermartingale while the second-order hypothesis $\cH_{2,p}$ restricting the second-order moments as well is tailored for the Capital test supermartingale for which we can use second-order betting strategies such as Online Newton Steps (ONS). 

\subsection{Definition of the test supermartingales}\label{sec:supermartingales-definitions}
In this section, we assume that \Cref{ass:bounded-y} holds and introduce the Hoeffding and Capital test supermartingales studied in this work.

\subsubsection{Hoeffding test supermartingale}\label{sec:hoeffding}

For all $\lambda\in\rset^d$ and betting strategy $(\lambda_n)_{n\geq 1} \subset \rset^d$, define the Hoeffding test supermartingale and its predictable plug-in counterpart as
\begin{equation}\label{eq:hoeffding-LW}
L_n^{\rm H}(\lambda) = \prod_{t=1}^n \exp\lr{\lambda^\top X_t - \norm{\lambda}_2^2 D^2/8} \quad \text{and} \quad
W_n^{\rm H} = \prod_{t=1}^n \exp\lr{\lambda_t^\top X_t - \norm{\lambda_t}_2^2 D^2/8}\;,\quad n \in \nset \,.   
\end{equation}
Then the following proposition holds.
\begin{proposition}\label{prop:hoeffding-martingales}
For any $\lambda \in \rset^d$ and betting strategy $(\lambda_n)_{n\geq 1}\subset \rset^d$, $(L_n^{\rm H}(\lambda))_{n\in \nset}$ and $(W_n^{\rm H})_{n\in\nset}$ are test supermartingales for $\cH_0$ of \eqref{eq:null-X}. 
\end{proposition}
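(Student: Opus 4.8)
The plan is to verify the three defining properties of a test supermartingale from \Cref{def:test-supermartingale} for each of the two processes, the only nontrivial one being the conditional supermartingale inequality, which I would deduce from a conditional form of Hoeffding's lemma.

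First I would treat $(L_n^{\rm H}(\lambda))_{n\in\nset}$. The normalization $L_0^{\rm H}(\lambda)=1$ holds as an empty product, and nonnegativity is immediate since each factor is an exponential. Writing $L_n^{\rm H}(\lambda)=L_{n-1}^{\rm H}(\lambda)\exp\lr{\lambda^\top X_n-\norm{\lambda}_2^2 D^2/8}$ and using that $L_{n-1}^{\rm H}(\lambda)$ is $\cF_{n-1}$-measurable and nonnegative, the inequality $\CPE{L_n^{\rm H}(\lambda)}{\cF_{n-1}}\leq L_{n-1}^{\rm H}(\lambda)$ reduces to
$$
\CPE{\exp(\lambda^\top X_n)}{\cF_{n-1}} \leq \exp\lr{\norm{\lambda}_2^2 D^2/8} \quad \text{under } \cH_0 .
$$

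The key step is a conditional Hoeffding's lemma applied to the scalar variable $Y:=\lambda^\top X_n$. Under $\cH_0$ one has $\PEarg[n-1]{Y}=\lambda^\top\PEarg[n-1]{X_n}=0$, and for any $x,y\in\Xset$ the Cauchy--Schwarz inequality gives $\lrav{\lambda^\top(x-y)}\leq \norm{\lambda}_2\norm{x-y}_2\leq \norm{\lambda}_2 D$, so $Y$ lies in an interval of length at most $\norm{\lambda}_2 D$ (by \Cref{ass:bounded-y} and the fact that $X_n\in\Xset$). The conditional Hoeffding bound then yields $\CPE{\exp(Y)}{\cF_{n-1}}\leq \exp\lr{(\norm{\lambda}_2 D)^2/8}$, which is exactly the required inequality and establishes the claim for $L_n^{\rm H}(\lambda)$.

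For $(W_n^{\rm H})_{n\in\nset}$, I would observe that it is precisely the predictable plug-in supermartingale built from the family $\set{(L_n^{\rm H}(\lambda))_{n\in\nset}}{\lambda\in\rset^d}$, since $W_n^{\rm H}=\prod_{i=1}^n L_i^{\rm H}(\lambda_i)/L_{i-1}^{\rm H}(\lambda_i)$; the conclusion then follows from the plug-in principle recalled in \Cref{sec:plugin}. Equivalently — and this is the only place where care is needed — one repeats the computation above with the predictable parameter $\lambda_n$: because $\lambda_n$ is $\cF_{n-1}$-measurable, conditioning on $\cF_{n-1}$ lets one treat $\lambda_n$ as fixed and apply the bound $\CPE{\exp(\lambda_n^\top X_n)}{\cF_{n-1}}\leq \exp\lr{\norm{\lambda_n}_2^2 D^2/8}$ pointwise in $\omega$, after which the multiplicative telescoping gives the supermartingale inequality for $W_n^{\rm H}$. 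The main (and essentially only) obstacle is stating and justifying this conditional version of Hoeffding's lemma rigorously, in particular handling the $\cF_{n-1}$-measurable randomness of $\lambda_n$ in the predictable case; once that is in place, both assertions follow immediately.
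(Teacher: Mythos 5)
Your proposal is correct and follows essentially the same route as the paper: reduce the predictable plug-in case to the fixed-$\lambda$ case via the plug-in principle of \Cref{sec:plugin}, and establish the supermartingale inequality for $L_n^{\rm H}(\lambda)$ by applying the conditional Hoeffding lemma to the scalar $\lambda^\top X_n$, whose conditional range has length at most $\norm{\lambda}_2 D$ by Cauchy--Schwarz. The only difference is that you spell out the Cauchy--Schwarz step and the handling of the $\cF_{n-1}$-measurable bet $\lambda_n$, which the paper leaves implicit.
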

\begin{proof}
    As discussed in \Cref{sec:plugin}, we only have to prove that, under 
    $\cH_0$, $(L_n^{\rm H}(\lambda))_{n \in \nset}$ is a  supermartingale for any $\lambda \in \Lambda$. This is true because, by Hoeffding's lemma, $\PEarg[n-1]{\exp\lr{\lambda^\top X_n - \norm{\lambda}_2^2 D^2/8}} \leq \rme^{\lambda^\top \PEarg[n-1]{X_n}} =1$ under $\cH_0$.
\end{proof}

\subsubsection{Capital test supermartingale}\label{sec:capital}
Let $\Gamma \subset \cB_{1/(2B)}^d$. Then for any $\gamma \in \Gamma$ and betting strategy $(\gamma_n)_{n\geq 1} \subset \Gamma$, define the Capital test supermartingale and its predictable plug-in counterpart as 
\begin{equation}\label{eq:capital-LW}
L_n^{\rm C}(\gamma) = \prod_{t=1}^n \lr{1 + \gamma^\top X_t}
\quad \text{and}  \quad
W_n^{\rm C} = \prod_{t=1}^n \lr{1 + \gamma_t^\top X_t}\;, \quad n \geq 1\,.
\end{equation}
Then the following proposition holds. 
\begin{proposition}\label{prop:capital-martingales}
For all $\gamma \in \Gamma$, and betting strategy $(\gamma_n)_{n\geq 1} \subset \Gamma$, the processes  $(L_n^{\rm C}(\lambda))_{n\in \nset}$ and $(W_n^{\rm C})_{n\in\nset}$ are test martingales for $\cH_0$ of \eqref{eq:null-X}. 
\end{proposition}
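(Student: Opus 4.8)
The plan is to follow the proof of \Cref{prop:hoeffding-martingales}: by the predictable plug-in discussion of \Cref{sec:plugin} it is enough to show that the base process $(L_n^{\rm C}(\gamma))_{n\in\nset}$ is a test martingale for each fixed $\gamma\in\Gamma$, and then to observe that the passage to the predictable plug-in preserves the \emph{equality} in the conditional expectation, not merely the inequality. The computations are elementary, so the only thing I would track carefully is that every inequality in \Cref{def:test-supermartingale} is in fact an equality here.

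First I would check the normalization and nonnegativity requirements of \Cref{def:test-supermartingale}. Both $L_0^{\rm C}(\gamma)$ and $W_0^{\rm C}$ equal $1$ as empty products. For nonnegativity, the decisive input is the constraint $\gamma\in\Gamma\subset\cB_{1/(2B)}^d$, that is $\norm{\gamma}_2\leq 1/(2B)$, combined with $\norm{X_t}_2\leq B$, which holds because $X_t\in\Xset$ under \Cref{ass:bounded-y}. Cauchy--Schwarz then yields $\lrav{\gamma^\top X_t}\leq\norm{\gamma}_2\norm{X_t}_2\leq 1/2$, so each factor satisfies $1+\gamma^\top X_t\geq 1/2>0$; hence $L_n^{\rm C}(\gamma)$ is nonnegative, and the same argument applies factorwise to $W_n^{\rm C}$ since $\gamma_t\in\Gamma$ for every $t$.

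It remains to verify the martingale identity under $\cH_0$. As $(X_t)_{t\in\nset}$ is adapted, $L_{n-1}^{\rm C}(\gamma)$ is $\cF_{n-1}$-measurable and factors out, giving
\[
\CPE{L_n^{\rm C}(\gamma)}{\cF_{n-1}}
= L_{n-1}^{\rm C}(\gamma)\,\lr{1+\gamma^\top\,\PEarg[n-1]{X_n}}
= L_{n-1}^{\rm C}(\gamma)\,,
\]
where the last step uses $\PEarg[n-1]{X_n}=0$ under \eqref{eq:null-X}. For the plug-in I would run the identical computation, exploiting that the betting strategy is predictable, so that both $W_{n-1}^{\rm C}$ and $\gamma_n$ are $\cF_{n-1}$-measurable:
\[
\CPE{W_n^{\rm C}}{\cF_{n-1}}
= W_{n-1}^{\rm C}\,\lr{1+\gamma_n^\top\,\PEarg[n-1]{X_n}}
= W_{n-1}^{\rm C}\,.
\]
There is no genuine obstacle in this statement; the only delicate point is the nonnegativity step, where the specific radius $1/(2B)$ of the ball containing $\Gamma$ is precisely what keeps each factor at least $1/2$, so that the coin-betting wealth remains positive and the process is well defined.
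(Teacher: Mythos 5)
Your proof is correct and follows essentially the same route as the paper: the martingale identity is exactly the one-line computation $\PEarg[n-1]{1+\gamma_n^\top X_n}=1+\gamma_n^\top\PEarg[n-1]{X_n}=1$ given there, together with the predictable plug-in argument of \Cref{sec:plugin}. Your explicit Cauchy--Schwarz verification that $\Gamma\subset\cB_{1/(2B)}^d$ forces each factor to be at least $1/2$ is a worthwhile detail the paper leaves implicit, but it does not change the argument.
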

\begin{proof}
    This is true because, under $\cH_0$, $\PEarg[n-1]{1 + \gamma_t^\top X_t} = 1 + \gamma_t^\top \PEarg[n-1]{X_t} = 1$. 
\end{proof}

\subsubsection{Two steps capital test supermartingale}\label{sec:capital-2steps}
In the next sections, we will also study the power of the Capital test supermartingale introduced in \cite[Section~3]{Shekhar21} and which consists in defining the betting strategy $(\gamma_n)_{n\geq 1}$ of \eqref{eq:hoeffding-LW}  using a two steps approach. In the first step, we try to find the direction with the largest projection for $X_t$ and, in the second step, we chose the right bet along this direction.  Formally, 
for $\gamma \in [-1/2,1/2]$ and two predictable processes $(\gamma_n)_{n\geq 1} \subset [-1/2,1/2]$ and $(\eta_n)_{n\geq 1} \subset \cB_{1/B}^d$, define
\begin{equation}\label{eq:capital-2steps}
L_n^{\rm C, 2steps}(\gamma) = \prod_{t=1}^n (1 + \gamma \eta_t^\top X_t) \quad \text{and} \quad  W_n^{\rm C, 2steps} = \prod_{t=1}^n (1 + \gamma_t 
\eta_t^\top X_t) \,, \quad n \in \nset    \;,
\end{equation}
which are clearly test supermartingales for $\cH_0$ of \eqref{eq:null-X} similarly to \Cref{prop:capital-martingales}.
\subsection{Limiting cases and lower bounds}\label{sec:lower-bounds}
 We start by providing limit cases for the vanishing rate of $m_n$ where finite rejection time cannot be guaranteed and provide lower bounds for the expected rejection time when $m_n$ does not exceed a given threshold. 

\subsubsection{Hoeffding test supermartingale}
We start with the Hoeffding test supermartingale of \Cref{sec:hoeffding} and define, for all $\alpha \in (0,1)$, the rejection time at level $\alpha$ by 
$\tau_\alpha^{\rm H} := \inf\set{n\in\nset}{W_n^{\rm H} \geq 1/\alpha}$.
We rely on the following non-restrictive assumption on the betting strategy.
\begin{assumption}\label{ass:regret-hoeffding}
    For any process $(X_t)_{t\in \nset}$, the betting strategy $(\lambda_t)_{t\geq 1}$ constructed using $(X_t)_{t \in \nset}$ satisfies $\inf_{n \geq 1} \cR_n \geq 0$, where $\cR_n := \max_{\lambda\in\rset^d} \log L_n^{\rm H}(\lambda) - \log W_n^{\rm H}$. 
\end{assumption}  

Our first result shows that, under $\cH_1$, we cannot reject the null if $m_n$ vanishes faster than $\bigo{1/\sqrt{n}}$.

\begin{proposition}\label{prop:limit-case-hoeffding}
Assume that \Cref{ass:regret-hoeffding} holds. Then the following assertions hold.
\begin{enumerate}
    \item\label{itm:limit-case-hoeffding-1} For all $\alpha \in (0,1)$, there exist $m > 0$ and a process $(X_t)_{t \in \nset}$  which satisfies $\norm{\mu_n}_2 \geq m / \sqrt{n}$ for all $n \geq 1$ and  $\PP[\tau_\alpha^{\rm H} = +\infty] = 1$. 
    \item\label{itm:limit-case-hoeffding-2} For any deterministic sequence $(m_n)_{n \geq 1}$ such that $m_n = \smallo{1/\sqrt{n}}$, there exists a process $(X_t)_{t \in \nset}$ which satisfies $\norm{\mu_n}_2 \geq m_n$ for all $n \geq 1$ and such that  $\PP[\tau_\alpha^{\rm H} = +\infty] = 1$ for all $\alpha \in (0,1)$.
\end{enumerate}
\end{proposition}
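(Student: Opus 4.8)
The plan is to convert \cref{ass:regret-hoeffding} into a deterministic upper bound on $\log W_n^{\rm H}$ and then design processes that keep this bound below the rejection threshold. Writing $S_n := \sum_{t=1}^n X_t$, the map $\lambda \mapsto \log L_n^{\rm H}(\lambda) = \lambda^\top S_n - n\norm{\lambda}_2^2 D^2/8$ is a concave quadratic, maximized at $\lambda = 4S_n/(nD^2)$ with value $\max_{\lambda\in\rset^d}\log L_n^{\rm H}(\lambda) = 2\norm{S_n}_2^2/(nD^2)$. Hence $\cR_n \geq 0$ is exactly the inequality $\log W_n^{\rm H} \leq 2\norm{S_n}_2^2/(nD^2)$. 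Since $\{\tau_\alpha^{\rm H}<+\infty\}$ requires some $W_n^{\rm H}$ to reach $1/\alpha$, controlling $\norm{S_n}_2$ controls the test. I would build both counterexamples from \emph{deterministic} processes, for which $S_n$ is deterministic and $\mu_n = S_n/n$; then $\norm{\mu_n}_2$ and the bound above are directly tied, and the conclusion $\tau_\alpha^{\rm H}=+\infty$ holds surely, not merely almost surely.

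For \cref{itm:limit-case-hoeffding-1}, I would fix a direction $u$ and choose increments in $\Xset$ whose partial sums track $S_n = m\sqrt n\, u$ (this is possible since the non-vacuity of $\cH_0$ puts the origin in the convex hull of $\Xset$, so values near the origin along some ray are available, up to a harmless discretization when $\Xset$ is not an interval). This gives $\norm{\mu_n}_2 = m/\sqrt n$ and collapses the oracle bound to the \emph{constant} $2m^2/D^2$. Choosing $m>0$ with $2m^2/D^2 < \log(1/\alpha)$ forces $W_n^{\rm H} < 1/\alpha$ for every $n$, so $\tau_\alpha^{\rm H}=+\infty$. This also explains why $1/\sqrt n$ is critical: at this rate the bound neither grows nor vanishes, and $m$ (depending on $\alpha$) can be pushed below the threshold.

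For \cref{itm:limit-case-hoeffding-2} the difficulty is that the conclusion is demanded \emph{for every} $\alpha\in(0,1)$ at once. Since $\tau_\alpha^{\rm H}=+\infty$ means $W_n^{\rm H}<1/\alpha$ for all $n$, requiring this for all $\alpha$ is equivalent to $\sup_n W_n^{\rm H}\le 1$, i.e. $\log W_n^{\rm H}\le 0$ for every $n$. The oracle bound alone cannot give this, being a strictly positive constant as soon as $\mu_n\neq 0$, so I would instead force each betting factor to be at most one. Using $\log W_n^{\rm H} = \sum_{t=1}^n\bigl(\lambda_t^\top X_t - \norm{\lambda_t}_2^2 D^2/8\bigr)$, it suffices to pick each increment $X_t$, in response to the predictable bet $\lambda_t$, so that $\lambda_t^\top X_t \le \norm{\lambda_t}_2^2 D^2/8$. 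For $d\ge 2$ this is clean: take $X_t$ in the hyperplane $\lambda_t^\perp\cap\Xset$, so the $t$-th factor equals $\exp(-\norm{\lambda_t}_2^2 D^2/8)\le 1$ and $W_n^{\rm H}\le 1$ whatever the strategy; the remaining freedom inside $\lambda_t^\perp$ is then used to steer $S_n$ so that $\norm{S_n}_2$ grows at least like $\sqrt n$, which comfortably dominates the required $\norm{S_n}_2 = n\norm{\mu_n}_2 \ge n m_n = \smallo{\sqrt n}$ coming from $m_n=\smallo{1/\sqrt n}$.

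The hard part will be exactly this last balancing act, and especially the one-dimensional case, where orthogonality is unavailable. There the factor-smallness constraint reduces to the sign condition $\lambda_t X_t \le \lambda_t^2 D^2/8$, which caps how aggressively one may grow the drift while keeping every factor below one; a naive choice (matching $X_t$ to $\lambda_t$) can be stalled by a cautious strategy. The argument must use that \cref{ass:regret-hoeffding} already excludes degenerate strategies that would beat the fixed-$\lambda$ oracle, together with the fact that only the sublinear growth $n m_n = \smallo{\sqrt n}$ is needed, to show the drift lower bound $\norm{\mu_n}_2\ge m_n$ can still be maintained for all $n$ with $\log W_n^{\rm H}\le 0$. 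Checking throughout that the chosen increments remain in $\Xset$ (in particular respecting $B=\sup_{x\in\Xset}\norm{x}_2$) is the remaining bookkeeping.
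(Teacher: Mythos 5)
Your reduction is the right one, and for \cref{itm:limit-case-hoeffding-1} your argument is exactly the paper's: it takes a deterministic one-dimensional process $X_t=m(\sqrt{t}-\sqrt{t-1})\ge 0$, so that $S_n=m\sqrt n$, notes that \Cref{ass:regret-hoeffding} forces $\log W_n^{\rm H}=\max_\lambda\log L_n^{\rm H}(\lambda)-\cR_n\le 2\norm{S_n}_2^2/(nD^2)=2m^2/D^2$, and picks $m<D/\sqrt{2\log(1/\alpha)}$. Nothing to change there.

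For \cref{itm:limit-case-hoeffding-2}, however, your proposal does not contain a proof. The paper closes this point with the same one-line deterministic construction, now with $z_n:=nm_n$, giving $\log W_n^{\rm H}\le 2z_n^2/(nD^2)=2nm_n^2/D^2\to 0$; there is no adversarial response to the bets, no orthogonality, and no case distinction on $d$. You instead set yourself the stronger target $\log W_n^{\rm H}\le 0$ for \emph{every} $n$. Your reading is legitimate (the conclusion ``for all $\alpha\in(0,1)$'' does literally require $W_n^{\rm H}\le 1$ for all $n$, and the $\smallo{1}$ bound above only gives this for $n$ large enough, so the paper's own one-liner is loose on exactly this point), but having raised the bar you then do not clear it: the construction $X_t\in\lambda_t^\perp$ is only sketched for $d\ge 2$, the ``balancing act'' ensuring $\norm{S_n}_2\ge nm_n$ for \emph{all} $n$ while staying in $\Xset$ is deferred, and the case $d=1$ --- the one the paper actually treats, and one the proposition must cover --- is explicitly left open. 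It cannot be rescued by the same trick, since in dimension one $\lambda_t^\perp=\lrc{0}$ whenever $\lambda_t\neq 0$, which forces $X_t=0$ and kills the mean. So the hard part of \cref{itm:limit-case-hoeffding-2} is announced rather than proved. The fix is either to complete your adversarial argument (including a genuinely different idea for $d=1$), or to fall back on the paper's bound $2nm_n^2/D^2\to 0$ and accept the correspondingly weaker reading of the conclusion.
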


Our second result provides a lower bound on the rejection time under $\cH_1$ when $m_n$ does not exceed an upper bound $m > 0$. 

\begin{proposition}\label{prop:hoeff-lower-power}
Assume that \Cref{ass:regret-hoeffding} holds. Then for all $m > 0$, there exists a process $(X_t)_{t\in \nset}$ which satisfies $\norm{\mu_n}_2 \geq m$ for all $n \geq 1$ and such that for all $\alpha \in (0,1)$,
$\PP[\tau_{\alpha}^{\rm H} \geq \frac{D^2\log(1/\alpha)}{2m^2}] = 1$.
\end{proposition}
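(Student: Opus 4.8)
The plan is to exhibit a single deterministic, constant-in-time process for which \Cref{ass:regret-hoeffding} forces $W_n^{\rm H}$ to grow no faster than an exponential of rate $2m^2/D^2$, so that the threshold $1/\alpha$ cannot be crossed before time $D^2\log(1/\alpha)/(2m^2)$. The crucial observation is that a nonnegative regret bounds the wealth from \emph{above}, which is precisely what a lower bound on the rejection time requires.

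First I would turn \Cref{ass:regret-hoeffding} into a deterministic ceiling on the log-wealth. Since $\cR_n \geq 0$, we have $\log W_n^{\rm H} \leq \max_{\lambda\in\rset^d} \log L_n^{\rm H}(\lambda)$ for every $n$. Writing $S_n := \sum_{t=1}^n X_t$, the map $\lambda \mapsto \log L_n^{\rm H}(\lambda) = \lambda^\top S_n - n\norm{\lambda}_2^2 D^2/8$ is a strictly concave quadratic, maximized at $\lambda^\star = 4 S_n/(nD^2)$ with maximal value $2\norm{S_n}_2^2/(nD^2)$. Hence $\log W_n^{\rm H} \leq 2\norm{S_n}_2^2/(nD^2)$ for all $n$.

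Next I would choose the worst-case process. Fix any $u \in \Xset$ with $\norm{u}_2 = m$ and set $X_t \equiv u$; such a point exists in the relevant regime $m \leq B$, while for $m > B$ no adapted process can meet $\norm{\mu_n}_2 \geq m$ by Jensen's inequality and \Cref{ass:bounded-y}, so there is nothing to prove. This deterministic process is trivially adapted, and since $\PEarg[t-1]{X_t} = u$ we get $\mu_n = u$, so $\norm{\mu_n}_2 = m$ for all $n$, verifying the constraint $\norm{\mu_n}_2 \geq m$. Moreover $S_n = n u$, so $\norm{S_n}_2 = nm$ and the ceiling above becomes $\log W_n^{\rm H} \leq 2 n m^2/D^2$. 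I would then conclude: for every integer $n < D^2\log(1/\alpha)/(2m^2)$ we have $\log W_n^{\rm H} \leq 2nm^2/D^2 < \log(1/\alpha)$, i.e.\ $W_n^{\rm H} < 1/\alpha$, so the stopping rule has not fired and $\tau_\alpha^{\rm H} \geq D^2\log(1/\alpha)/(2m^2)$; as the process is deterministic this holds surely, hence with probability one, for all $\alpha\in(0,1)$.

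The computation is routine; the genuine points of care are the direction of the inequality and the choice of $u$. Keeping $\norm{u}_2$ equal to, rather than merely above, $m$ is exactly what makes the constant $D^2/(2m^2)$ sharp, since any larger norm would loosen the ceiling and weaken the lower bound. I expect the verification that such a worst-case $u$ exists (or, for a domain $\Xset$ containing no point of norm exactly $m$, an equivalent construction whose partial averages stay at norm $m$) to be the main, though minor, obstacle.
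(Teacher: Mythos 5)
Your proposal is correct and follows essentially the same route as the paper: the paper also takes the constant deterministic process (in dimension $d=1$, via $X_t = z_t - z_{t-1}$ with $z_n = nm$), uses $\cR_n \geq 0$ to get $\log W_n^{\rm H} \leq \max_\lambda \log L_n^{\rm H}(\lambda) = 2nm^2/D^2$, and concludes that the threshold cannot be reached before $n \geq D^2\log(1/\alpha)/(2m^2)$. Your extra care about the existence of $u\in\Xset$ with $\norm{u}_2 = m$ is reasonable (the paper leaves this implicit), though note that if no such process exists the proposition's existential claim would be vacuous rather than "nothing to prove," so strictly one should read the statement as restricted to $m$ for which $\Xset$ admits such a point.
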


\subsubsection{Capital test supermartingale}
We now derive similar results for the Capital test supermartingale of \Cref{sec:capital}. Define, for all $\alpha \in (0,1)$, the rejection time at level $\alpha$ by $
\tau_\alpha^{\rm C} := \inf\set{n\in\nset}{W_n^{\rm C} \geq 1/\alpha}$.
Our first result shows that, under $\cH_{2,p}$, we cannot reject the null if $m_n$ vanishes faster than $\bigo{1/n}$.
\begin{proposition}\label{prop:limit-case-capital}
The following assertions hold.
\begin{enumerate}
    \item\label{itm:limit-case-capital-1} For all $\alpha \in (0,1)$, there exist $m > 0$ and a process $(X_t)_{t \in \nset}$  which satisfies $\norm{\mu_n}_\infty \geq m / n$ for all $n \geq 1$ and  $\PP[\tau_\alpha^{\rm C} = +\infty] = 1$. 
    \item\label{itm:limit-case-capital-2} For any deterministic sequence $(m_n)_{n \geq 1}$ such that $m_n = \smallo{1/n}$, there exists a process $(X_t)_{t \in \nset}$ which satisfies $\norm{\mu_n}_\infty \geq m_n$ for all $n \geq 1$ and such that  $\PP[\tau_\alpha^{\rm C} = +\infty] = 1$ for all $\alpha \in (0,1)$.
\end{enumerate}
\end{proposition}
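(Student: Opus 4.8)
The plan is to prove both assertions by exhibiting explicit \emph{deterministic} processes and controlling the capital $W_n^{\rm C}$ by hand. For a deterministic process one has $\PEarg[t-1]{X_t}=X_t$, so $\mu_n = \tfrac1n\sum_{t=1}^n X_t$ and the lower bounds on $\norm{\mu_n}_\infty$ become constraints on the partial sums $S_n := \sum_{t=1}^n X_t$. The only analytic input needed is the elementary inequality $\log(1+u)\le u$ together with $\Gamma\subset\cB_{1/(2B)}^d$ and $\norm{X_t}_2\le B$; crucially, no regret-type hypothesis on the betting strategy is required, which is why \Cref{prop:limit-case-capital} carries no analogue of \Cref{ass:regret-hoeffding}.

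For the first assertion I would concentrate all the drift in the first step: fix $X_1 = m e_1$ and $X_t = 0$ for $t\ge 2$, so that $\mu_n = (m/n)e_1$ and $\norm{\mu_n}_\infty = m/n$. Since only the first factor is nontrivial, $W_n^{\rm C} = 1 + m(\gamma_1)_1 \le 1 + m\norm{\gamma_1}_2 \le 1 + m/(2B)$ for \emph{every} betting strategy $(\gamma_t)_{t\ge1}$. Given $\alpha\in(0,1)$ it then suffices to pick $m>0$ small enough that $1 + m/(2B) < 1/\alpha$, i.e. $m < 2B(1-\alpha)/\alpha$; this forces $W_n^{\rm C} < 1/\alpha$ for all $n$ and hence $\PP[\tau_\alpha^{\rm C} = +\infty]=1$.

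For the second assertion the quantifier ``for all $\alpha\in(0,1)$'' is the genuinely different — and harder — part. Letting $\alpha\uparrow 1$ shows that $\PP[\tau_\alpha^{\rm C}=+\infty]=1$ for every $\alpha$ is equivalent to the \emph{pathwise} bound $\sup_n W_n^{\rm C}\le 1$. A spike in a fixed direction no longer works, because a strategy betting along $+e_1$ would make the first factor exceed $1$. The key observation is that $(\gamma_t)_{t\ge1}$ is predictable, so $\gamma_1$ is $\cF_0$-measurable and hence a \emph{known} vector for a deterministic process; I would therefore place the spike \emph{against} it. Concretely, set $c := \sup_{n\ge 1} n m_n$, which is finite because $m_n = \smallo{1/n}$, and take $X_1 = -\sign((\gamma_1)_1)\,c\,e_1$ with $X_t = 0$ for $t\ge 2$. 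Then $\gamma_1^\top X_1 = -c\,|(\gamma_1)_1|\le 0$, so the first factor is $\le 1$ while all later factors equal $1$, giving $W_n^{\rm C}\le 1$ for all $n$; meanwhile $\norm{\mu_n}_\infty = c/n \ge m_n$ by the choice of $c$ (which is admissible as soon as $c\le B$, i.e. in the regime where the alternative is actually attainable).

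The main obstacle is precisely this pathwise control $\sup_n W_n^{\rm C}\le 1$ against an adaptive strategy. It forces every factor to satisfy $\gamma_t^\top X_t\le 0$, i.e. each increment $X_t$ must lie in the half-space opposite to the current bet; since the strategy may ``freeze'' us after the first step by always betting in our accumulated drift direction (so that setting $X_t=0$ is the only safe move), the attainable drift is essentially confined to a single step. This caps $\norm{S_n}_\infty$ at $\bigo{B}$ and hence $\norm{\mu_n}_\infty$ at $\bigo{1/n}$, which is exactly why $1/n$ is the detection boundary and why the hypothesis $m_n=\smallo{1/n}$ is the correct one: the modest, frozen drift we can sustain still dominates any $\smallo{1/n}$ sequence. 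The remaining steps — checking positivity of each factor, the elementary inequalities, and the equivalence with $\sup_n W_n^{\rm C}\le 1$ — are routine.
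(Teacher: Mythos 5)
For Assertion~\ref{itm:limit-case-capital-1} your argument is essentially the paper's: the paper also takes a deterministic process with nonnegative increments $X_t=z_t-z_{t-1}$ and uses $\log W_n^{\rm C}\le\frac{1}{2B}\sum_{t=1}^nX_t=z_n/(2B)$ together with $\norm{\gamma_t}_2\le 1/(2B)$, choosing the drift so that $\norm{\mu_n}_\infty=m/n$ and $m<2B\log(1/\alpha)$ (the paper's displayed choice ``$z_n=mn$'' only makes the bound work if read as the constant sequence $z_n=m$, i.e.\ your spike at $t=1$); your direct bound $W_n^{\rm C}=1+m(\gamma_1)_1\le 1+m/(2B)<1/\alpha$ is the same computation without the logarithm.

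For Assertion~\ref{itm:limit-case-capital-2} you genuinely diverge from the paper, and your route is the more careful one. The paper keeps nonnegative increments and takes $z_n=nm_n=\smallo{1}$, concluding from $\log W_n^{\rm C}\le nm_n/(2B)$; but, as you correctly observe, ``$\PP[\tau^{\rm C}_\alpha=+\infty]=1$ for all $\alpha\in(0,1)$'' is equivalent to the pathwise bound $\sup_nW_n^{\rm C}\le 1$, and an upper bound that is merely $\smallo{1}$ rather than $\le 0$ does not deliver this: if the strategy places a positive bet on a positive increment then $W_n^{\rm C}>1$ and $\tau^{\rm C}_\alpha<+\infty$ for $\alpha$ close enough to $1$. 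Your fix---exploiting predictability of $\gamma_1$ to aim the single spike against the first bet, so that the unique nontrivial factor is $1+\gamma_1^\top X_1=1-c\,|(\gamma_1)_1|\in[1/2,1]$---is exactly what is needed here and repairs a step the paper's own proof glosses over. The price is the restriction $c:=\sup_n nm_n\le B$, which you flag; be aware that this is not quite synonymous with attainability of the alternative (one can have $m_n\le B$ for every $n$ yet $c>B$, in which case the required drift must be spread over several rounds and an adaptive strategy that bets along the revealed direction forces $W_n^{\rm C}>1$), so the caveat marks a genuine boundary of the statement rather than a defect of your construction. Two minor points: fix a convention $\sign(0)=1$ so the spike is nonzero when $(\gamma_1)_1=0$, and if $\cF_0$ is nontrivial then $\gamma_1$, hence $X_1$, is only $\cF_0$-measurable rather than deterministic, which is harmless since all your bounds hold pointwise.
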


Our second result provides a lower bound on the rejection time under $\cH_1$ when $m_n$ does not exceed an upper bound $m > 0$. 

\begin{proposition}\label{prop:capital-lower-power}
For all $m > 0$, there exists a process $(X_t)_{t\in \nset}$ which satisfies $\norm{\mu_n}_\infty \geq m$ for all $n \geq 1$ and such that for all $\alpha \in (0,1)$,
$
\PP[\tau_{\alpha}^{\rm C} \geq \frac{2B\log(1/\alpha)}{m}] = 1
$.
\end{proposition}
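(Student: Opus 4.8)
The plan is to exhibit an explicit (in fact deterministic) worst-case process and to bound the capital from above uniformly over all admissible betting strategies, so that the rejection time is forced to be large with probability one. First I would take the constant process $X_t \equiv x^\star$ with $x^\star := m\,e_1$, where $e_1$ denotes the first canonical basis vector of $\rset^d$, and let $\Xset$ be any bounded set containing $x^\star$ (so that \Cref{ass:bounded-y} holds); note that then $B = \sup_{x\in\Xset}\norm{x}_2 \geq \norm{x^\star}_2 = m$. The decisive feature of this choice is that $x^\star$ has a single nonzero coordinate, whence $\norm{x^\star}_\infty = \norm{x^\star}_2 = m$. Since the process is deterministic, $\PEarg[t-1]{X_t} = x^\star$ and therefore $\mu_n = x^\star$ for every $n$, so the constraint $\norm{\mu_n}_\infty \geq m$ holds with equality.

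Next I would bound $W_n^{\rm C}$ from above for an arbitrary betting strategy $(\gamma_t)_{t\geq 1}\subset\Gamma\subset\cB_{1/(2B)}^d$. By Cauchy--Schwarz, $\lrav{\gamma_t^\top x^\star} \leq \norm{\gamma_t}_2\,\norm{x^\star}_2 \leq m/(2B) \leq 1/2$, so each factor $1+\gamma_t^\top x^\star$ is positive and the elementary inequality $\log(1+u)\leq u$ applies. This yields
\[
\log W_n^{\rm C} = \sum_{t=1}^n \log\lr{1 + \gamma_t^\top x^\star} \leq \sum_{t=1}^n \gamma_t^\top x^\star \leq n\,\frac{m}{2B}\,.
\]

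Finally I would translate this into a lower bound on the rejection time. The event $\{W_n^{\rm C}\geq 1/\alpha\}$ forces $\log W_n^{\rm C}\geq \log(1/\alpha)$, which by the previous display requires $n m/(2B)\geq \log(1/\alpha)$, i.e. $n \geq 2B\log(1/\alpha)/m$. Hence $W_n^{\rm C} < 1/\alpha$ for every $n < 2B\log(1/\alpha)/m$, so $\tau_\alpha^{\rm C} = \inf\set{n\in\nset}{W_n^{\rm C}\geq 1/\alpha} \geq 2B\log(1/\alpha)/m$. Since this holds deterministically and for every admissible strategy and every $\alpha\in(0,1)$, we obtain $\PP[\tau_\alpha^{\rm C}\geq 2B\log(1/\alpha)/m]=1$.

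The argument involves no real obstacle: unlike the Hoeffding case (\Cref{prop:hoeff-lower-power}), no regret hypothesis such as \Cref{ass:regret-hoeffding} is needed here, precisely because $\Gamma$ is norm-bounded, so the per-step log-increment is controlled directly. The one point requiring care --- and the only genuine design decision --- is selecting $x^\star$ with a single active coordinate so that the $\infty$-norm lower bound on $\mu_n$ matches the $2$-norm that governs the growth of the capital; any other direction would make $\norm{x^\star}_2 > \norm{x^\star}_\infty$ and weaken the constant in the bound.
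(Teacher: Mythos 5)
Your proof is correct and follows essentially the same route as the paper's: a deterministic constant process together with the elementary bound $\log(1+u)\leq u$ and the norm constraint $\norm{\gamma_t}\leq 1/(2B)$, giving $\log W_n^{\rm C}\leq nm/(2B)$ uniformly over betting strategies. The only cosmetic difference is that the paper works with $d=1$ and $X_t\equiv m$, whereas you embed the same example in $\rset^d$ along the first coordinate axis.
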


\subsection{General power guarantees}\label{sec:power-general}
In this section, we study the power of the Hoeffding and Capital test supermartingales in a general form under $\cH_1$ and $\cH_{2,p}$. Then, we derive deterministic lower bounds for the Hoeffding and Capital test supermartingales which, as a consequence of \Cref{lem:power_from_bound},  immediately provide general power guarantees when the vanishing rate of $m_n$ is controlled. The next section is dedicated to particular cases where explicit power bounds can be computed.

\subsubsection{Hoeffding test supermartingale}
We start with the Hoeffding test supermartingale of \Cref{sec:hoeffding} and provide a deterministic lower bound for $\log W_n^{\rm H}$ and general power guarantees.
\begin{theorem}\label{thm:power-hoeff-nodim}
Assume that the regret $\cR_n := \max_{\lambda\in\rset^d} \log L_n^{\rm H}(\lambda) - \log W_n^{\rm H}$ of the betting strategy $(\lambda_n)_{n\geq 1}$ satisfies 
$\rho := \sum_{n \geq 1} \PP[\cR_n > r_n] < +\infty$ for some nonnegative sequence $(r_n)_{n\geq 1}$. 
Then, under the alternative $\cH_1$ defined in \eqref{eq:H1}, $(W_n^{\rm H})_{n\geq 1}$ satisfies \eqref{eq:boundW} for any $\alpha \in (0,1)$ with $\varrho = \rho + \varrho_1 + \frac{\pi^2}{3}$ and 
\begin{equation}\label{eq:un-hoeff}
u_n := \frac{2n\lr{m_n - 2D\sqrt{\log(n)/n}}_+^2}{D^2}- r_n \;.    
\end{equation}
Hence, we have $\liminf_{n\to +\infty} \frac{\log W_n^{\rm H}}{u_n} \geq 1  \;\;\PP\text{-}a.s$ and $\PE[\tau_\alpha^{\rm H}] \leq \rho + \varrho_1 + \frac{\pi^2}{3} + \aleph((u_n)_{n\geq 1},\log\lr{1/\alpha})$. 
\end{theorem}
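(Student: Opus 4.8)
The plan is to convert the definition of the regret into an \emph{exact} identity for $\log W_n^{\rm H}$, reduce the problem to a concentration estimate for a single vector-valued martingale, and then collect three bad events by a union bound so that \Cref{lem:power_from_bound} applies. First I would evaluate the ``leader'' $\max_{\lambda\in\rset^d}\log L_n^{\rm H}(\lambda)$ in closed form. Writing $S_n := \sum_{t=1}^n X_t$, we have $\log L_n^{\rm H}(\lambda) = \lambda^\top S_n - n\norm{\lambda}_2^2 D^2/8$, a concave quadratic maximized at $\lambda^\star = 4 S_n/(nD^2)$ with value $2\norm{S_n}_2^2/(nD^2)$. By the definition of $\cR_n$ this gives the exact identity $\log W_n^{\rm H} = 2\norm{S_n}_2^2/(nD^2) - \cR_n$.

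Next I would decompose $S_n = n\mu_n + M_n$, where $M_n := \sum_{t=1}^n \lr{X_t - \PEarg[t-1]{X_t}}$ is a martingale under $\PP$. On the event $\lrc{\norm{\mu_n}_2 \geq m_n} \cap \lrc{\norm{M_n}_2 \leq 2D\sqrt{n\log n}}$, the triangle inequality yields $\norm{S_n}_2 \geq n m_n - 2D\sqrt{n\log n} = n\lr{m_n - 2D\sqrt{\log(n)/n}}$, and squaring with a positive part to absorb the sign gives $2\norm{S_n}_2^2/(nD^2) \geq 2n\lr{m_n - 2D\sqrt{\log(n)/n}}_+^2/D^2$. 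Intersecting further with $\lrc{\cR_n \leq r_n}$ and using the identity above shows $\log W_n^{\rm H} \geq u_n$ on these three events simultaneously (in the regime where the positive part vanishes, only $\lrc{\cR_n\le r_n}$ is needed since $\norm{S_n}_2^2\ge0$).

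The crux is to bound $\PP[\norm{M_n}_2 > 2D\sqrt{n\log n}]$ with a constant \emph{free of the dimension} $d$. The increments satisfy $\norm{X_t - \PEarg[t-1]{X_t}}_2 \leq D$, since the conditional mean lies in the closed convex hull of $\Xset$ and the map $y\mapsto\norm{X_t-y}_2$ attains its maximum over that hull at an extreme point, where it is at most $D$. A Hilbert-space martingale concentration inequality of Pinelis type then gives $\PP[\norm{M_n}_2 \geq r] \leq 2\exp\lr{-r^2/(2nD^2)}$; taking $r = 2D\sqrt{n\log n}$ makes the right-hand side $2/n^2$, so that $\sum_{n\geq1}\PP[\norm{M_n}_2 > 2D\sqrt{n\log n}] \leq 2\sum_{n\geq1}n^{-2} = \pi^2/3$. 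A union bound now gives $\PP[\log W_n^{\rm H} < u_n] \leq \PP[\norm{\mu_n}_2 < m_n] + \PP[\norm{M_n}_2 > 2D\sqrt{n\log n}] + \PP[\cR_n > r_n]$, and summing over $n$ produces \eqref{eq:boundW} with $\varrho = \rho + \varrho_1 + \pi^2/3$. Finally \Cref{lem:power_from_bound} applied to $(W_n^{\rm H})$ and $(u_n)$ delivers both the almost-sure statement $\liminf_n \log W_n^{\rm H}/u_n \geq 1$ and the bound on $\PE[\tau_\alpha^{\rm H}]$.

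I expect the main obstacle to be precisely this dimension-free control of $\norm{M_n}_2$: a coordinate-wise union bound or a Gaussian method-of-mixtures supermartingale would both introduce a spurious dependence on $d$ (through either a factor $d$ in the exponent or a $\tfrac{d}{2}\log(\cdot)$ penalty) and would spoil the clean $\pi^2/3$ constant, which is why a genuinely dimension-free martingale inequality is essential. The remaining steps are exact algebra and routine Borel--Cantelli bookkeeping.
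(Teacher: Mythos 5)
Your proposal is correct and follows essentially the same route as the paper's proof: closed-form evaluation of $\max_\lambda \log L_n^{\rm H}(\lambda) = 2\norm{S_n}_2^2/(nD^2)$, the triangle inequality on the event $\lrc{\norm{\mu_n}_2\ge m_n}$, the dimension-free Pinelis inequality for the $(2,1)$-smooth space $\rset^d$ (the paper's \Cref{lem:pinelis}, giving the same $2/n^2$ tail and $\pi^2/3$ constant), and a union bound feeding into \Cref{lem:power_from_bound}. The only difference is cosmetic: you work with $S_n = n\mu_n + M_n$ where the paper works with $\hat\mu_n$ and $\mu_n$ directly.
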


\subsubsection{Capital test supermartingale}
We now derive similar results for the Capital test supermartingale of \Cref{sec:capital}. 
We let $(\rme_1,\cdots,\rme_{2d})$ be such that $(\rme_1,\cdots,\rme_d)$ is the canonical basis of $\rset^d$ and $\rme_{d+i} = -\rme_i$ for all $i=1,\cdots,d$.  

\begin{theorem}\label{thm:capital-power}
Assume that the regret $\cR_n := \max_{\gamma\in\Gamma} \log L_n^{\rm C}(\gamma) - \log W_n^{\rm C}$ of the betting strategy $(\gamma_n)_{n\geq 1}$
 satisfies
$\rho := \sum_{n \geq 1} \PP[\cR_n > r_n] < +\infty$
for some nonnegative sequence $(r_n)_{n\geq 1}$. Then, under the alternative $\cH_{2,\infty}$ defined in \eqref{eq:H2}, $(W_n^{\rm C})_{n\geq 1}$ satisfies \eqref{eq:boundW} for any $\alpha \in (0,1)$ with $\varrho = \rho + \varrho_{2,\infty} + \frac{\pi^2}{6}$ and
$$
u_n :=  n \epsilon_n m_n - 4 n \epsilon_n^2 v_n - r_n - 2\log(2dn^2)\;,
$$
for any deterministic sequence $(\epsilon_n)_{n\geq 1} \subset \cE$ with $\cE = \set{\epsilon > 0}{\forall i=1,\dots,2d, \epsilon \rme_i \in \Gamma}$. In particular we can take $(u_n)_{n\geq 1}$ as follows.
\begin{enumerate}
    \item\label{itm:capital-power-aggregation} If $\lrc{\epsilon \rme_1,\cdots,\epsilon\rme_{2d}}\subset\Gamma$ for some fixed $\epsilon \in (0,\frac{1}{2B}]$, then
\begin{equation}\label{eq:un-capital-aggregation}
u_n := \epsilon nm_n - 4\epsilon^2 nv_n - 2\log(2dn^2) - r_n \;.
\end{equation}
\item\label{itm:capital-power} If $\set{\epsilon \rme_i}{\epsilon\in[0,\frac{1}{2B}], i=1,\dots,2d}\subset \Gamma$, then 
\begin{equation}\label{eq:un-capital}
u_n := \frac{n m_n}{4}\lr{\frac{1}{B} \wedge \frac{m_n}{4v_n} } - 2\log(2dn^2) - r_n   \;.
\end{equation}
\end{enumerate}
Hence $\liminf_{n\to +\infty} \frac{\log W_n^{\rm C}}{u_n} \geq 1  \;\;\PP\text{-}a.s$ and $\PE[\tau_\alpha^{\rm C}] \leq \rho + \varrho_{2,\infty} + \frac{\pi^2}{6} + \aleph((u_n)_{n\geq 1},\log\lr{1/\alpha})$. 
\end{theorem}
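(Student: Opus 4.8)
The plan is to establish the summability bound \eqref{eq:boundW} for $(W_n^{\rm C})_{n\ge1}$ with the announced $u_n$ and $\varrho$; the almost-sure growth statement and the bound on $\PE[\tau_\alpha^{\rm C}]$ then follow verbatim from \Cref{lem:power_from_bound}. Fix $n\ge1$ and a value $\epsilon_n$ with $\epsilon_n\rme_i\in\Gamma$ for every $i$ (i.e. $\epsilon_n\in\cE$). On the event $\{\cR_n\le r_n\}$ the regret assumption gives $\log W_n^{\rm C}\ge\max_{\gamma\in\Gamma}\log L_n^{\rm C}(\gamma)-r_n\ge\max_{1\le i\le 2d}\log L_n^{\rm C}(\epsilon_n\rme_i)-r_n$, so it suffices to lower bound $\log L_n^{\rm C}(\epsilon_n\rme_i)=\sum_{t=1}^n\log\lr{1+\epsilon_n\rme_i^\top X_t}$ for a single well-chosen direction. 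I would pick $i^\star$ so that $\rme_{i^\star}^\top\mu_n=\norm{\mu_n}_\infty$ (the $\pm$ coordinate directions are exactly what the family $(\rme_i)_{1\le i\le 2d}$ encodes), which on $\{\norm{\mu_n}_\infty\ge m_n\}$ yields $\rme_{i^\star}^\top\mu_n\ge m_n$.

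Next I would separate a predictable drift from a martingale remainder. Since $|\epsilon_n\rme_i^\top X_t|\le\epsilon_n B\le1/2$, the elementary bound $\log(1+z)\ge z-z^2$ on $[-1/2,1/2]$ gives, after taking conditional expectations, $\sum_t\PEarg[t-1]{\log(1+\epsilon_n\rme_{i^\star}^\top X_t)}\ge\epsilon_n n\,\rme_{i^\star}^\top\mu_n-\epsilon_n^2 n\,\nu_{n,\infty}\ge\epsilon_n n m_n-\epsilon_n^2 n v_n$ on the alternative event $\{\norm{\mu_n}_\infty\ge m_n,\ \nu_{n,\infty}\le v_n\}$, using $(\rme_i^\top X_t)^2\le\norm{X_t}_\infty^2$ and Jensen. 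Writing $N_n^{(i)}:=\log L_n^{\rm C}(\epsilon_n\rme_i)-\sum_t\PEarg[t-1]{\log(1+\epsilon_n\rme_i^\top X_t)}$ for the centered part, this reduces the statement to controlling the lower deviations of $N_n^{(i^\star)}$: on $\{\cR_n\le r_n\}$ intersected with the alternative event, $\{\log W_n^{\rm C}<u_n\}$ forces $N_n^{(i^\star)}<-3\epsilon_n^2 n v_n-2\log(2dn^2)$.

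For the remainder I would run a standard exponential-supermartingale (Ville/Freedman-type) argument on $-N_n^{(i)}$, for each fixed $i$ and with $\epsilon_n$ frozen so that $(N_k^{(i)})_{k\le n}$ is a genuine martingale with bounded increments. Using $|\log(1+z)|\le 2|z|$ on $[-1/2,1/2]$ bounds its predictable quadratic variation by $\langle N^{(i)}\rangle_n\le4\epsilon_n^2 n\,\nu_{n,\infty}\le4\epsilon_n^2 n v_n$ on the alternative event. The resulting sub-gamma tail yields a lower deviation of order $\sqrt{4\epsilon_n^2 n v_n\,\log(1/\delta)}+c\log(1/\delta)$; choosing the per-direction failure probability $\delta=1/(2dn^2)$ (so $\log(1/\delta)=\log(2dn^2)$) and absorbing the square-root term by the arithmetic–geometric inequality into the slack $3\epsilon_n^2 n v_n$ leaves exactly the additive cost $2\log(2dn^2)$ — which is where the factor $4$ in front of $\epsilon_n^2 n v_n$ and the logarithmic term in $u_n$ come from. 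A union bound over the $2d$ directions absorbs the random choice of $i^\star$ and turns $\delta$ into $2d\cdot(2dn^2)^{-1}=n^{-2}$. Collecting the three sources of failure,
\[
\PP[\log W_n^{\rm C}<u_n]\le\PP[\cR_n>r_n]+\PParg{\norm{\mu_n}_\infty<m_n\text{ or }\nu_{n,\infty}>v_n}+n^{-2},
\]
and summing over $n$ gives $\varrho\le\rho+\varrho_{2,\infty}+\sum_{n\ge1}n^{-2}=\rho+\varrho_{2,\infty}+\pi^2/6$, as claimed.

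Finally, the two special cases are convenient choices of $(\epsilon_n)$ in the general bound. Taking $\epsilon_n\equiv\epsilon$ gives \eqref{eq:un-capital-aggregation} directly. For \eqref{eq:un-capital} I would choose $\epsilon_n=\tfrac{m_n}{8v_n}\wedge\tfrac1{2B}\in[0,\tfrac1{2B}]$, the maximizer of $\epsilon\mapsto\epsilon m_n-4\epsilon^2 v_n$ on that interval, and then lower bound the resulting value by $\tfrac{m_n}{4}\lr{\tfrac1B\wedge\tfrac{m_n}{4v_n}}$ (an equality when the unconstrained optimum is interior, a valid underestimate otherwise). I expect the only genuinely delicate step to be the remainder control: pinning down the increment and conditional-variance constants for $\xi_t=\log(1+\epsilon_n\rme_i^\top X_t)-\PEarg[t-1]{\log(1+\epsilon_n\rme_i^\top X_t)}$ sharply enough that the exponential-rate optimization, together with the arithmetic–geometric absorption, closes with the stated constants $4$ and $2$ rather than merely up-to-constant versions.
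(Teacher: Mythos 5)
Your proposal is correct and reaches the stated constants, but it routes the key concentration step differently from the paper. The paper applies $\log(1+x)\ge x-x^2$ \emph{pathwise}, so that $\log W_n^{\rm C}$ is lower bounded by the realized sum $\sum_t(\gamma^\top X_t-(\gamma^\top X_t)^2)$ minus the regret, and then invokes a dedicated result (\Cref{lem:argument_capital}) stating that, simultaneously for $K$ fixed bets and with probability $1-1/n^2$, this realized sum dominates $\sum_t(\PEarg[t-1]{\gamma^\top X_t}-4\PEarg[t-1]{(\gamma^\top X_t)^2})-2\log(Kn^2)$; that lemma is itself proved by gluing (via Cauchy--Schwarz) a Bercu--Touati self-normalized exponential supermartingale for the linear part with a bounded-variable mgf bound for the squared part, and the factor $4$ arises as $1+3$ from those two pieces. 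You instead apply $\log(1+x)\ge x-x^2$ only under the conditional expectation to isolate the drift $n\epsilon_n m_n-n\epsilon_n^2 v_n$, and control the centered log-wealth $N_n^{(i)}$ by Freedman's inequality with predictable variation at most $4\epsilon_n^2 n\nu_{n,\infty}$; your factor $4$ arises as $1$ (drift) plus $3$ (the slack absorbing the $\sqrt{V\ell}$ term by AM--GM). I checked that your constants do close: with $v:=\epsilon_n^2 nv_n$, $\ell:=\log(2dn^2)$, $V=4v$ and increments bounded by $2$, Freedman gives $\exp\lr{-\frac{(3v+2\ell)^2}{2(4v+\frac{2}{3}(3v+2\ell))}}\le\exp(-\ell)$ since $(3v+2\ell)^2-12v\ell-\frac{8}{3}\ell^2=9v^2+\frac{4}{3}\ell^2\ge0$, so the per-direction failure probability is indeed $1/(2dn^2)$ and the union bound over the $2d$ signed coordinate directions (which also handles the random choice of $i^\star$, exactly as the paper's events $D_{n,k}$ do) yields the same $\pi^2/6$ term. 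The remaining bookkeeping --- the optimization $\epsilon_n=\frac{1}{2B}\wedge\frac{m_n}{8v_n}$ and the bound $m_n-4\epsilon_n v_n\ge m_n/2$ for \cref{itm:capital-power}, and the constant choice for \cref{itm:capital-power-aggregation} --- coincides with the paper's. The paper's route has the advantage of packaging the concentration in a reusable lemma (also used for \Cref{thm:capital-g-bound}) and of avoiding any discussion of the increments of $\log(1+\gamma^\top X_t)$; yours is more self-contained for a reader who knows Freedman but makes the constant-tracking the delicate point, as you anticipated.
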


\subsubsection{Two steps capital test supermartingale}
To conclude this section, we study the Capital 2steps strategy of \Cref{sec:capital-2steps} and provide a deterministic lower bound on $\log W_n^{\rm C, 2steps}$. 

\begin{theorem}\label{thm:capital-2step-bound}
Assume that the regret $\cR_n := \max_{\gamma \in [-1/2,1/2]} \log L_n^{\rm C}(\gamma) - \log W_n^{\rm C}$ of the betting strategy $(\gamma_n)_{n\geq 1}$ and the stochastic regret $\cS_n := \sup_{\eta \in \cB_{1/B}^d} \sum_{t=1}^n \PEarg[t-1]{\eta^\top X_t} - \sum_{t=1}^n \PEarg[t-1]{\eta_t^\top X_t}$ of $(\eta_n)_{n\geq 1}$ respectivly satisfy 
$\rho := \sum_{n \geq 1} \PP[\cR_n > r_n] < +\infty $ and 
$\varsigma := \sum_{n \geq 1} \PP[\cS_n > s_n] < +\infty
$,
for some nonnegative sequences $(r_n)_{n\geq 1}$ and $(s_n)_{n\geq 1}$. 
Then, under the alternative $\cH_{2,2}$ of \eqref{eq:H2},  $(W_n^{\rm C, 2steps})_{n\geq 1}$ satisfies \eqref{eq:boundW} for any $\alpha \in (0,1)$ with $\varrho = \rho + \varrho_{2,2} +\varsigma + \frac{\pi^2}{3}$ and
$$
u_n := \frac{(nm_n-s_n)_+}{4}\lr{1 \wedge \frac{(nm_n-s_n)_+}{4n v_n} } - 4\log(n) - r_n \;.
$$
Hence $\liminf_{n\to +\infty} \frac{\log W_n^{\rm C, 2steps}}{u_n} \geq 1  \;\;\PP\text{-}a.s$ and $\PE[\tau_\alpha^{\rm C, 2steps}] \leq \rho + \varrho_{2,2} + \varsigma + \frac{\pi^2}{3} + \aleph((u_n)_{n\geq 1},\log\lr{1/\alpha})$. 
\end{theorem}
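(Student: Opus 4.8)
The plan is to follow the template of the proof of \Cref{thm:capital-power} after reducing the two-step wealth to a one-dimensional coin-betting process. First I would set $Y_t := \eta_t^\top X_t$ and note that, since $\norm{\eta_t}_2 \le 1/B$ and $\norm{X_t}_2 \le B$ by \Cref{ass:bounded-y}, we have $|Y_t| \le 1$; hence $W_n^{\rm C, 2steps} = \prod_{t=1}^n \lr{1 + \gamma_t Y_t}$ is a scalar capital process with bets $\gamma_t \in [-1/2,1/2]$, whose second-step regret is the $\cR_n$ of the statement. Fixing any deterministic $\epsilon \in [0,1/2]$ and using the definition of $\cR_n$, one gets the pathwise bound $\log W_n^{\rm C, 2steps} \ge \sum_{t=1}^n \log\lr{1 + \epsilon Y_t} - \cR_n$, so it remains to lower bound the oracle log-wealth $\sum_{t=1}^n \log\lr{1 + \epsilon Y_t}$ and later optimize over $\epsilon$.

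The second step is to replace this oracle log-wealth by its predictable (conditional) counterpart at only a logarithmic cost. As in \Cref{thm:capital-power}, rather than peeling off an additive martingale (which would cost a $\sqrt{n\log n}$ term), I would exploit the multiplicative structure: the process $\prod_{t=1}^n \lr{ \lr{1+\epsilon Y_t}\, \PEarg[t-1]{\lr{1+\epsilon Y_t}^{-1}} }^{-1}$ is a nonnegative martingale, so a fixed-time Markov bound at threshold of order $n^2$, together with the elementary inequalities $\lr{1+z}^{-1} \le 1 - z + 2z^2$ and $-\log\lr{1-u} \ge u$ valid for $|z|\le 1/2$, yields, off an event of probability $O(1/n^2)$, a bound of the form $\sum_{t=1}^n \log\lr{1+\epsilon Y_t} \ge \epsilon \sum_{t=1}^n \PEarg[t-1]{Y_t} - 4\epsilon^2 \sum_{t=1}^n \PEarg[t-1]{Y_t^2} - 4\log(n)$. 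Summing these exceptional probabilities over $n$ produces the $\pi^2/3$ constant.

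Next I would feed in the first-step regret and the hypothesis $\cH_{2,2}$. By definition of $\cS_n$, $\sum_{t=1}^n \PEarg[t-1]{Y_t} \ge \sup_{\eta \in \cB_{1/B}^d}\sum_{t=1}^n \PEarg[t-1]{\eta^\top X_t} - \cS_n = (n/B)\norm{\mu_n}_2 - \cS_n$, while Cauchy--Schwarz gives $\sum_{t=1}^n \PEarg[t-1]{Y_t^2} \le B^{-2}\sum_{t=1}^n \PEarg[t-1]{\norm{X_t}_2^2} = (n/B^2)\nu_{n,2}$. On the intersection of $\{\cR_n \le r_n\}$, $\{\cS_n \le s_n\}$, the concentration event above, and the $\cH_{2,2}$ event $\{\norm{\mu_n}_2 \ge m_n,\ \nu_{n,2} \le v_n\}$, these combine (after the scalar normalization absorbing $B$) into $\log W_n^{\rm C, 2steps} \ge \epsilon \lr{nm_n - s_n}_+ - 4\epsilon^2 n v_n - r_n - 4\log(n)$. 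Optimizing the right-hand side over $\epsilon \in [0,1/2]$, taking $\epsilon = \min\{1/2,\ \lr{nm_n-s_n}_+/(8nv_n)\}$ and using the clean closed-form lower bound $\tfrac14\lr{nm_n-s_n}_+\bigl(1 \wedge \tfrac{\lr{nm_n-s_n}_+}{4nv_n}\bigr)$ for the resulting maximum, gives exactly $u_n$. A union bound then shows $\PP[\log W_n^{\rm C, 2steps} < u_n]$ is dominated by the sum of the four exceptional probabilities, so summing over $n$ yields \eqref{eq:boundW} with $\varrho = \rho + \varrho_{2,2} + \varsigma + \pi^2/3$, and \Cref{lem:power_from_bound} delivers the two advertised conclusions.

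I expect the concentration step to be the main obstacle: securing a purely logarithmic (rather than $\sqrt{n\log n}$) deviation penalty forces one through the reciprocal-martingale argument, and the constants in the $\lr{1+z}^{-1}$ expansion must be tracked carefully to land on the $4\epsilon^2 n v_n$ coefficient. Coordinating this with the first-step regret $\cS_n$, so that the single scalar bet $\epsilon$ is aligned with the data-driven direction $\eta_t$ rather than a fixed one, is the other delicate point.
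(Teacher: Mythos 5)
Your proof is correct, and its skeleton (regret decomposition, concentration of the oracle log-wealth around its predictable compensator at only logarithmic cost, insertion of the stochastic regret and of $\cH_{2,2}$, optimization of a deterministic per-$n$ bet $\epsilon$) matches the paper's. The route differs in two respects. First, the paper disposes of this theorem in one line by specializing \Cref{thm:capital-g-bound} to $\cG = \set{x\mapsto u^\top x}{u\in\cB_{1/B}^d}$, whereas you redo that argument directly for the linear class; this is harmless. Second, and more substantively, your concentration device is different: the paper's \Cref{lem:argument_capital} concentrates $\sum_t\lr{\gamma Y_t - (\gamma Y_t)^2}$ via an exponential supermartingale assembled from a Bercu--Touati self-normalized inequality and a Hoeffding-type bound glued together by Cauchy--Schwarz, while you bound $\sum_t\log(1+\epsilon Y_t)$ directly through the nonnegative mean-one martingale $\prod_t\bigl[(1+\epsilon Y_t)\,\PEarg[t-1]{(1+\epsilon Y_t)^{-1}}\bigr]^{-1}$, a fixed-time Markov bound at level $n^2$, and the elementary inequalities $(1+z)^{-1}\le 1-z+2z^2$ for $z\ge -1/2$ and $-\log(1-u)\ge u$. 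Your device is self-contained, avoids the external lemmas, and in fact yields the sharper coefficients $2\epsilon^2$ and $2\log n$ in place of $4\epsilon^2$ and $4\log n$ (so your exceptional probabilities sum to $\pi^2/6$ rather than the $\pi^2/3$ you quote, which only strengthens the claim). Two bookkeeping points you share with the paper rather than introduce: the computation really produces $nm_n/B - s_n$ and $nv_n/B^2$ rather than $nm_n - s_n$ and $nv_n$, so the stated $u_n$ is only literally obtained after the normalization $B=1$ that the paper also makes silently (cf.\ \Cref{cor:capital-examples-2step}); and the optimized $\epsilon$ depends on $n$ through the deterministic sequences $m_n,s_n,v_n$ only, so it is legitimately predictable when you build the martingale for each $n$, as you do.
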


\subsubsection{Discussion on the bounds and the impact of the dimension $d$}\label{sec:discussion-dimension}

To the best of our knowledge, the best regret bounds for the betting strategies used in the Hoeffding and Capital supermartingales are logarithmic, i.e. $r_n  = \bigo{\log(n)}$. Additionally, the stochastic regret for the projection step in the 2 steps Capital supermartingale can achieve $s_n = \bigo{\sqrt{n\log(n)}}$. We provide details in \Cref{sec:explicit-bounds-d}. With this in mind, we observe that \Cref{thm:power-hoeff-nodim,thm:capital-power,thm:capital-2step-bound} provide power guarantees at different order of generality depending on the size of the alternative. Namely, 
\Cref{thm:power-hoeff-nodim} and \Cref{thm:capital-2step-bound} apply only if $m_n$ is at least $\bigo{\sqrt{\log(n)/n}}$ which is not necessary for \Cref{thm:capital-power}. Similarly, Assertion~\ref{itm:capital-power-aggregation} in \Cref{thm:capital-power} applies only if $v_n$ is at most $\bigo{m_n}$ and $m_n$ needs to be at least $\bigo{\log(n)/n}$ while for Assertion~\ref{itm:capital-power},  $v_n$ can  dominate $m_n$ if $m_n^2/v_n$ is at least $\bigo{\log(n)/n}$. These rates are near-optimal compared to the $\bigo{1/\sqrt{n}}$ and $\bigo{1/n}$  limit vanishing mean rates for the Hoeffding and Capital test supermartingales respectively as shown in \Cref{prop:limit-case-hoeffding,prop:limit-case-capital}. ~\\

These vanishing rates for $m_n$ are comparable to the case studied in \cite[Theorem~2]{Shekhar21} where the authors show, in particular that, for an i.i.d. sequence $(X_t)_{t \in \nset}$ the Capital 2steps strategy of \Cref{sec:capital-2steps} has a detection boundary of $\bigo{\sqrt{\log(n)/n}}$ in the sense that for all $n \geq 1$, $\PP[\tau_\alpha^{\rm C, 2steps} > n]$ is controlled under the alternative $\PE[X_0] \geq m_n$ with $m_n = \bigo{\sqrt{\log(n)/n}}$. 
Our results tend to believe that, the Capital test martingale of \Cref{sec:capital} would achieve a detection boundary of order $\bigo{\log(n)/n}$ due to better second-order moment properties under additionnal variance contraints in the alternative. ~\\

While \Cref{thm:power-hoeff-nodim,thm:capital-2step-bound} are the more restrictive for $m_n$, they have the advantage of providing dimension free bounds and the ability to consider an alternative on the euclidean norm. On the other side, \Cref{thm:capital-power} considers an alternative on the infinite norm and provides a dimension-dependent bound. Since $\norm{x}_{\infty} \leq \norm{x}_2 \leq \sqrt{d} \norm{x}_{\infty}$, the alternative $\cH_{2,\infty}$ is more restrictive than $\cH_{2,2}$. To apply \Cref{thm:capital-power} for an alternative in euclidean norm, we can use the fact that $\cH_{2,\infty}$ is implied by the alternative $\cH_{2,2}' : \sum_{n \geq 1} \PParg{\norm{\mu_n}_{2} < \sqrt{d} m_n \text{ or } \nu_{n,2} > v_n} < + \infty$,
which adds another dependence on the dimension in the bound. All in all the dimension deteriorates the 1 step Capital test supermartingales performances whereas the Hoeffding and 2 steps Capital test supermartingale are much more robust to the dimension. 
In the next section, we specify the betting strategies used the rates $(m_n)_{n\geq 1}$ and $(v_n)_{n\geq 1}$ in the alternatives and provide explicit power bounds. 

\subsection{Explicit power bounds}\label{sec:explicit-bounds-d}
In this section, we provide examples of alternatives where the bounds obtained using \Cref{lem:power_from_bound} and \Cref{thm:power-hoeff-nodim,thm:capital-power} can be computed. 

\subsubsection{Hoeffding test supermartingale}
We start by providing power guarantees for the test supermartingale $(W_n^{\rm H, FTL})_{n\geq 1}$ which we define as the Hoeffding test supermartingale of \eqref{eq:hoeffding-LW} with Follow The Leader (FTL) as the betting strategy. 

\begin{lemma}\label{lem:regret-ftl}
Define $(W_n^{\rm H, FTL})_{n\geq 1}$ as in \eqref{eq:hoeffding-LW} where, for all $n\in\nset$,
$$
\lambda_{n+1} = \argmax_{\lambda\in\rset^d}\log L_{n}^{\rm H}(\lambda) = \frac{4\hat\mu_{n}}{D^2} \;.
$$
Then for all $n \geq 1$,
$\displaystyle
\max_{\lambda\in\rset^d} \log L_n^{\rm H}(\lambda) - \log W_n^{\rm H, FTL} \leq 4(1 + \log(n))$. 
\end{lemma}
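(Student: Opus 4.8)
The plan is to recognize that, with the FTL choice, $(W_n^{\rm H,FTL})$ is nothing but the cumulative reward of the online maximization of the concave quadratic rewards $f_t(\lambda) := \lambda^\top X_t - \norm{\lambda}_2^2 D^2/8$, so that the quantity to be bounded is exactly the FTL regret. I would control it by a Be-The-Leader telescoping argument combined with a per-step increment that decays like $1/t$, and then sum using the harmonic number.

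First I would write $\log L_n^{\rm H}(\lambda) = \sum_{t=1}^n f_t(\lambda) = \lambda^\top S_n - n\norm{\lambda}_2^2 D^2/8$ with $S_n := \sum_{t=1}^n X_t$. This is a concave quadratic, maximized at $\lambda_{n+1} = 4 S_n/(nD^2) = 4\hat{\mu}_n/D^2$ with $\hat{\mu}_n := S_n/n$, which confirms the stated closed form of the FTL iterate (with $\lambda_1 = 0$). Hence $\lambda_t = \argmax_{\lambda} \sum_{s=1}^{t-1} f_s(\lambda)$ is the leader at each step and $\cR_n = \max_{\lambda} \sum_{t=1}^n f_t(\lambda) - \sum_{t=1}^n f_t(\lambda_t)$.

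Next I would invoke the Be-The-Leader inequality: since $\lambda_{t+1}$ maximizes $\sum_{s=1}^t f_s$, an easy induction gives $\sum_{t=1}^n f_t(\lambda_{t+1}) \geq \max_{\lambda} \sum_{t=1}^n f_t(\lambda)$, so that $\cR_n \leq \sum_{t=1}^n \lr{f_t(\lambda_{t+1}) - f_t(\lambda_t)}$. It then remains to bound each increment. Using concavity of $f_t$ I would drop the (negative) quadratic term to obtain $f_t(\lambda_{t+1}) - f_t(\lambda_t) \leq \nabla f_t(\lambda_t)^\top(\lambda_{t+1} - \lambda_t)$, and compute the two factors explicitly: $\nabla f_t(\lambda_t) = X_t - (D^2/4)\lambda_t = X_t - \hat{\mu}_{t-1}$, while the running-mean recursion $\hat{\mu}_t - \hat{\mu}_{t-1} = (X_t - \hat{\mu}_{t-1})/t$ gives $\lambda_{t+1} - \lambda_t = 4(X_t - \hat{\mu}_{t-1})/(D^2 t)$. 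Therefore each increment equals $4\norm{X_t - \hat{\mu}_{t-1}}_2^2/(D^2 t)$.

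The crux is then the geometric bound $\norm{X_t - \hat{\mu}_{t-1}}_2 \leq D$, which I would get by writing $X_t - \hat{\mu}_{t-1} = \tfrac{1}{t-1}\sum_{s=1}^{t-1}(X_t - X_s)$ and applying the triangle inequality together with $\norm{X_t - X_s}_2 \leq D$ from \Cref{ass:bounded-y}; this makes each increment at most $4/t$, and summing yields $\cR_n \leq 4\sum_{t=1}^n 1/t \leq 4(1 + \log n)$ since the $n$-th harmonic number is at most $1+\log n$. The one delicate point I expect is the initial step $t=1$, where $\hat{\mu}_0$ is vacuous: I would fix the $\argmax$ convention by taking $\hat{\mu}_0$ to be a point of the closed convex hull of the support $\Xset$ (equivalently, noting that the null forces $0$ into that hull), so that $\norm{X_1 - \hat{\mu}_0}_2 \leq D$ still holds and the $t=1$ term remains $\leq 4$. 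This is precisely what is needed, since $4H_1 = 4(1+\log 1)$ leaves no slack at $n=1$ and forces the first increment to be controlled by $D$ rather than merely by $B$.
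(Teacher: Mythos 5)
Your proof is correct and follows essentially the same route as the paper's: both reduce the regret to the Be-The-Leader telescoping sum $\sum_t (f_t(\lambda_{t+1})-f_t(\lambda_t))$, bound each increment by $\tfrac{4}{D^2 t}\norm{X_t-\hat\mu_{t-1}}_2^2 \le 4/t$ using the diameter bound, and sum the harmonic series. The only (cosmetic) difference is that you obtain the per-step bound via the first-order concavity inequality while the paper completes the square and uses the recursion $\lambda_{t+1}-4X_t/D^2=(1-1/t)(\lambda_t-4X_t/D^2)$; your explicit treatment of the $t=1$ convention for $\hat\mu_0$ is a point the paper leaves implicit.
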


Then, the following result holds.

\begin{corollary}\label{cor:hoeffding-examples-multi}
Define $(W_n^{\rm H, FTL})_{n\geq 1}$ as in \Cref{lem:regret-ftl} and let $\tau_{\alpha}^{\rm H,FTL}$ be its rejection time at level $\alpha$. Assume that $\cH_1$ holds. Then the following assertions hold. 
\begin{enumerate}
    \item\label{itm:hoeffding-examples-multi-1} If $m_n = m n^{-a}$ for some $m>0$ and $0 \leq a < 1/2$,  then
    $\displaystyle
    \liminf_{n \to +\infty} \frac{\log W_n^{\rm H,FTL}}{n^{1-2a}} \geq \frac{2m^2}{D^2}$, $\PP\text{-a.s.}$,
    and
    $$
    \PEarg{\tau_\alpha^{\rm H, FTL}} \leq \bigo{\lr{\linlog\lr{\frac{D^2}{m^2(1-2a)}} + \frac{D^2\log(1/\alpha)}{m^2}}^{\frac{1}{1-2a}}} \;.
    $$
    \item\label{itm:hoeffding-examples-multi-2} If $m_n = m \sqrt{\log(n)/n}$ for some $m > (2 + \sqrt{2}) D$, then 
    $\displaystyle
    \liminf_{n \to +\infty} \frac{\log W_n^{\rm H,FTL}}{\log(n)} \geq \frac{2m(m-4D)+4D^2}{D^2}$, $\PP\text{-a.s.}$, 
    and
    $$
    \PEarg{\tau_\alpha^{\rm H, FTL}} \leq \bigo{\exp\lr{\frac{D^2\log(1/\alpha)}{m(m-4D)+2D^2}}} \;.
    $$
\end{enumerate}
\end{corollary}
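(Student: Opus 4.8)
The plan is to reduce everything to \Cref{thm:power-hoeff-nodim} by exploiting that \Cref{lem:regret-ftl} provides a \emph{deterministic} regret bound. Indeed, taking $r_n = 4(1+\log(n))$, the event $\lrc{\cR_n > r_n}$ is empty, so $\rho = \sum_{n\geq 1}\PP[\cR_n > r_n] = 0$ and \Cref{thm:power-hoeff-nodim} applies under $\cH_1$ with $\varrho = \varrho_1 + \pi^2/3$ and
$$
u_n = \frac{2n\lr{m_n - 2D\sqrt{\log(n)/n}}_+^2}{D^2} - 4(1+\log(n)) \;.
$$
This already delivers $\liminf_n (\log W_n^{\rm H,FTL})/u_n \geq 1$ almost surely and $\PEarg{\tau_\alpha^{\rm H,FTL}} \leq \varrho_1 + \pi^2/3 + \aleph((u_n)_{n\geq 1},\log(1/\alpha))$. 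What remains, for each prescribed rate $m_n$, is to read off the leading order of $u_n$ and to control the hitting index $\aleph$.

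For the almost sure statements I would expand $u_n$. In assertion \ref{itm:hoeffding-examples-multi-1}, since $a<1/2$ the term $m n^{-a}$ dominates $2D\sqrt{\log(n)/n}$, so the positive part is eventually active and $u_n/n^{1-2a}\to 2m^2/D^2$; writing $\log W_n^{\rm H,FTL}/n^{1-2a} = (\log W_n^{\rm H,FTL}/u_n)(u_n/n^{1-2a})$ and noting the second factor converges to a positive constant, the liminf transfers to the claimed $2m^2/D^2$. In assertion \ref{itm:hoeffding-examples-multi-2}, substituting $m_n = m\sqrt{\log(n)/n}$ collapses the bracket to $(m-2D)\sqrt{\log(n)/n}$, positive since $m>(2+\sqrt{2})D>2D$, so $u_n = \frac{2(m-2D)^2 - 4D^2}{D^2}\log(n) - 4 = \frac{2m(m-4D)+4D^2}{D^2}\log(n) - 4$; the hypothesis $m>(2+\sqrt{2})D$ is precisely what makes this leading coefficient positive, its roots in $m$ being $(2\pm\sqrt{2})D$, and the same product argument gives the stated liminf.

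For the expectation bounds the task is to control $\aleph((u_n)_{n\geq 1},\log(1/\alpha)) = \inf\set{n\geq 1}{\inf_{k\geq n}u_k \geq \log(1/\alpha)}$, and here the two assertions differ sharply in difficulty. In assertion \ref{itm:hoeffding-examples-multi-2} the sequence $u_n = c\log(n) - 4$ with $c = (2m(m-4D)+4D^2)/D^2 > 0$ is eventually increasing, so $\inf_{k\geq n}u_k = u_n$ and solving the affine inequality $c\log(n) - 4 \geq \log(1/\alpha)$ gives $n \geq \exp\lr{(\log(1/\alpha)+4)/c}$, which lies inside the claimed $\bigo{\exp\lr{D^2\log(1/\alpha)/(m(m-4D)+2D^2)}}$.

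The genuine obstacle is the $\aleph$ bound in assertion \ref{itm:hoeffding-examples-multi-1}, where the correction $2D\sqrt{\log(n)/n}$ inside the positive part and the non-monotonicity of $u_n$ for small $n$ both intervene. The plan is to first locate a threshold $n_0$ beyond which $2D\sqrt{\log(n)/n} \leq \tfrac{1}{2}m n^{-a}$, so that $u_n \geq \frac{m^2}{2D^2}n^{1-2a} - 4(1+\log(n))$ and $u_n$ becomes monotone. This amounts to inverting $n^{1-2a} \gtrsim (16 D^2/m^2)\log(n)$, whose solution is of order $\big(\tfrac{D^2}{m^2(1-2a)}\log\tfrac{D^2}{m^2(1-2a)}\big)^{1/(1-2a)}$ and is exactly the source of the $\linlog\lr{D^2/(m^2(1-2a))}$ term. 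Then solving $\frac{m^2}{2D^2}n^{1-2a} \gtrsim \log(1/\alpha)$ contributes the $(D^2\log(1/\alpha)/m^2)^{1/(1-2a)}$ term. Taking the larger of the two thresholds and using $\max(A^{1/(1-2a)},B^{1/(1-2a)}) \leq (A+B)^{1/(1-2a)}$, together with absorbing the residual $\log(n)$ corrections and the numerical constants into the $\bigo{\cdot}$, yields the announced bound. The careful bookkeeping of these two monotone inversions, rather than any conceptual step, is where the effort lies.
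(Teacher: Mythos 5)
Your proposal is correct and follows essentially the same route as the paper: invoke \Cref{thm:power-hoeff-nodim} with the deterministic FTL regret bound of \Cref{lem:regret-ftl} (so $\rho=0$), then control $\aleph((u_n)_{n\geq 1},\log(1/\alpha))$ by first passing a threshold $n_0$ where $2D\sqrt{\log(n)/n}\leq \tfrac12 m n^{-a}$ and then inverting $a'n^{\beta}-b'\log(n)\geq x$, which is exactly what the paper packages into \Cref{lem:aleph-examples}. The only cosmetic difference is that the paper absorbs the constant part of the regret into the threshold $x=\log(1/\alpha)+4$ and states the inversion via the auxiliary function $\cL$, whereas you keep it inside $u_n$ and argue the inversion directly; the resulting bounds coincide.
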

       The upper-bound on the expectation of the rejection time explodes under the largest alternative, i.e. the smallest $m$. It is due to the FTL strategy that achieves optimal regret $\bigo{\log n}$. To improve the rate, one has to consider second-order test martingales such as Capital test supermartingale.

\subsubsection{Capital test supermartingale}
We now provide power guarantees for two Capital test supermartingales $(W_n^{\rm C, EWA})_{n\geq 1}$ and $(W_n^{\rm C, ONS})_{n\geq 1}$ define as the Capital test supermartingale of \eqref{eq:hoeffding-LW} with respectively Exponential Weighted Average (EWA) and Online Newton Steps (ONS) as the betting strategy. 
First, the EWA betting strategy achieves the following regret bound.  
\begin{lemma}\label{lem:regret-ewa}
Let $\epsilon \in [0, \frac{1}{2B}]$ and let $g_k = \epsilon \rme_k$ for $k=1,\cdots,2d$. Define $(W_n^{\rm C,EWA})_{n\geq 1}$ as in \eqref{eq:hoeffding-LW} where, for all $n\geq 1$,
$$
\gamma_n = \frac{\sum_{k=1}^{2d}L_{n-1}^{\rm C}(g_k) g_k}{\sum_{j=1}^{2d} L_{n-1}^{\rm C}(g_k)} \;.
$$
Then for all $n \geq 1$, 
$\displaystyle
\max_{k=1,\cdots,2d} \log L_n^{\rm C}\lr{g_k} - \log W_n^{\rm C,EWA} \leq \log(2d)$.
\end{lemma}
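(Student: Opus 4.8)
The plan is to recognize \Cref{lem:regret-ewa} as an instance of the classical exponential-weights (aggregating algorithm) regret bound, in which the $2d$ fixed bets $g_1,\dots,g_{2d}$ play the role of ``experts'' and the capital processes $L_n^{\rm C}(g_k)$ play the role of their accumulated wealths. The heart of the argument is a telescoping identity showing that the predictable plug-in wealth $W_n^{\rm C,EWA}$ is nothing but the \emph{uniform mixture} of the expert wealths, namely $W_n^{\rm C,EWA} = \frac{1}{2d}\sum_{k=1}^{2d} L_n^{\rm C}(g_k)$. Once this is established, the regret bound follows immediately by lower-bounding the sum by its largest term.

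To obtain the telescoping identity, I would first substitute the definition of $\gamma_t$ into the increment $1 + \gamma_t^\top X_t$ and bring the common denominator through, writing
\[
1 + \gamma_t^\top X_t = \frac{\sum_{k=1}^{2d} L_{t-1}^{\rm C}(g_k)\lr{1 + g_k^\top X_t}}{\sum_{j=1}^{2d} L_{t-1}^{\rm C}(g_j)} = \frac{\sum_{k=1}^{2d} L_{t}^{\rm C}(g_k)}{\sum_{j=1}^{2d} L_{t-1}^{\rm C}(g_j)}\,,
\]
where the last equality uses the one-step recursion $L_t^{\rm C}(g_k) = L_{t-1}^{\rm C}(g_k)\lr{1 + g_k^\top X_t}$ built into the definition \eqref{eq:capital-LW}. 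Taking the product over $t=1,\dots,n$, the consecutive numerators and denominators cancel, leaving
\[
W_n^{\rm C,EWA} = \prod_{t=1}^n \lr{1 + \gamma_t^\top X_t} = \frac{\sum_{k=1}^{2d} L_n^{\rm C}(g_k)}{\sum_{j=1}^{2d} L_0^{\rm C}(g_j)} = \frac{1}{2d}\sum_{k=1}^{2d} L_n^{\rm C}(g_k)\,,
\]
since $L_0^{\rm C}(g_j) = 1$ for every $j$ (empty product), so the initial denominator equals $2d$.

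To finish, I would observe that each factor $1 + g_k^\top X_t = 1 + \epsilon\, \rme_k^\top X_t$ is strictly positive: indeed $\lrav{\rme_k^\top X_t} \leq \norm{X_t}_2 \leq B$ by \Cref{ass:bounded-y}, and $\epsilon \leq 1/(2B)$ gives $\lrav{\epsilon\, \rme_k^\top X_t} \leq 1/2$, so each $L_n^{\rm C}(g_k) \geq 0$. Hence every term in the mixture is nonnegative and the sum dominates its maximum, yielding $W_n^{\rm C,EWA} \geq \frac{1}{2d}\max_{k} L_n^{\rm C}(g_k)$. Taking logarithms and rearranging produces the claimed bound $\max_k \log L_n^{\rm C}(g_k) - \log W_n^{\rm C,EWA} \leq \log(2d)$.

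I do not expect a genuine obstacle here: the whole content is the telescoping cancellation, which is purely algebraic, together with the elementary positivity check needed to pass to logarithms. The only point requiring care is the bookkeeping of the recursion $L_t^{\rm C}(g_k) = L_{t-1}^{\rm C}(g_k)\lr{1+g_k^\top X_t}$ and the initialization $L_0^{\rm C} = 1$, which must be handled cleanly so that the product collapses to exactly $\frac{1}{2d}\sum_k L_n^{\rm C}(g_k)$ with the correct constant $2d$.
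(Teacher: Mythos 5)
Your proof is correct and is essentially the argument the paper invokes: the paper's proof is a one-line citation of Proposition~3.1 of Cesa-Bianchi and Lugosi applied to the loss $\ell(\gamma,x)=-\log(1+\gamma^\top x)$, which is $1$-exp-concave, and that proposition is established by precisely your mixture/telescoping identity. Your self-contained version in fact yields the exact equality $W_n^{\rm C,EWA}=\frac{1}{2d}\sum_{k=1}^{2d}L_n^{\rm C}(g_k)$ (because the wealth increment $1+\gamma^\top X_t$ is affine in $\gamma$, so the exp-concavity inequality holds with equality), which is marginally sharper than the cited bound; the positivity check via $\epsilon\leq 1/(2B)$ and \Cref{ass:bounded-y} correctly justifies taking logarithms.
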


Then, the following result holds.

\begin{corollary}\label{cor:capital-examples-aggregation-multi}
Define $(W_n^{\rm C,EWA})_{n\geq 1}$ as in \Cref{lem:regret-ewa} for some $\epsilon \in (0, \frac{1}{2B}]$ and let $\tau_{\alpha}^{\rm C,EWA}$ be its rejection time at level $\alpha$. Assume that $\cH_2$ holds with $m_n = mn^{-a}$ for some $0 < a < 1$ and $m > 0$.  Then the following assertions hold. 
\begin{enumerate}
\item If $v_n = vn^{-a}$ and $\epsilon < \frac{m}{4v}$, then 
$\displaystyle
\liminf_{n \to +\infty} \frac{\log W_n^{\rm C,EWA}}{n^{1-a}} \geq \epsilon(m-4\epsilon v)$, $\PP\text{-a.s.}$,
and
$$
\PE[\tau_{\alpha}^{\rm C,EWA}] \leq \bigo{ \lr{\linlog\lr{\frac{1}{\epsilon(m-4\epsilon v)(1-a)}} + \frac{\log(d/\alpha)}{\epsilon(m-4\epsilon v)}}^{\frac{1}{1-a}}} \;.
$$
\item If $v_n = vn^{-2b}$ with $v>0$ and $a/2 < b < 1/2$, then 
$\displaystyle
\liminf_{n \to +\infty} \frac{\log W_n^{\rm C,EWA}}{n^{1-a}} \geq \epsilon m$, $\PP\text{-a.s.}$, 
and
$$
\PE[\tau_{\alpha}^{\rm C,EWA}] \leq \bigo{\lr{\frac{8\epsilon v}{m}}^{\frac{1}{2b-a}}} \vee \bigo{\lr{\linlog\lr{\frac{1}{\epsilon m(1-a)}} + \frac{\log(d/\alpha)}{\epsilon m}}^{\frac{1}{1-a}}}\;.
$$
\item If $v_n = vn^{-1}$ with $v>0$, then 
$\displaystyle
\liminf_{n \to +\infty} \frac{\log W_n^{\rm C,EWA}}{n^{1-a}} \geq \epsilon m$, $\PP\text{-a.s.}$, 
and
$$
\PE[\tau_{\alpha}^{\rm C,EWA}] \leq \bigo{\lr{\linlog\lr{\frac{1}{\epsilon m(1-a)}} + \frac{\log(d/\alpha) + \epsilon^2 v}{\epsilon m}}^{\frac{1}{1-a}}} \;.
$$
\item If $v_n = v \log(n)/n$ with $v>0$, then 
$\displaystyle
\liminf_{n \to +\infty} \frac{\log W_n^{\rm C,EWA}}{n^{1-a}} \geq \epsilon m$, $\PP\text{-a.s.}$, 
and
$$
\PE[\tau_{\alpha}^{\rm C,EWA}] \leq \bigo{\lr{\linlog\lr{\frac{1+\epsilon^2 v}{\epsilon m(1-a)}} + \frac{\log(d/\alpha)}{\epsilon m}}^{\frac{1}{1-a}}} \;.
$$
\end{enumerate}
\end{corollary}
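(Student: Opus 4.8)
The four assertions will all be read off from \Cref{thm:capital-power}, the genuine work being confined to the asymptotic inversion of its deterministic lower bound. First I would feed the EWA regret bound of \Cref{lem:regret-ewa} into Assertion~\ref{itm:capital-power-aggregation} of \Cref{thm:capital-power}. Since $g_k=\epsilon\rme_k$ with $\epsilon\in(0,\frac{1}{2B}]$, the family $\{\epsilon\rme_1,\dots,\epsilon\rme_{2d}\}$ is contained in $\Gamma$ and \Cref{lem:regret-ewa} controls precisely the comparison $\max_k\log L_n^{\rm C}(\epsilon\rme_k)-\log W_n^{\rm C,EWA}$ entering the proof of that assertion, with the \emph{deterministic} value $\log(2d)$. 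I therefore take $r_n=\log(2d)$, so that $\rho=0$ and $\varrho=\varrho_{2,\infty}+\pi^2/6<+\infty$ under $\cH_2$, and obtain the common deterministic lower bound $u_n=\epsilon n m_n-4\epsilon^2 n v_n-2\log(2dn^2)-\log(2d)$. With $m_n=mn^{-a}$ the three remaining choices of $v_n$ only change the size of the variance term $4\epsilon^2 n v_n$, and in every case $u_n\sim \epsilon(m-4\epsilon v)\,n^{1-a}$ (Case~1) or $u_n\sim\epsilon m\,n^{1-a}$ (Cases~2--4); dividing by $n^{1-a}$ and using the a.s.\ statement $\liminf_n \log W_n^{\rm C,EWA}/u_n\geq 1$ of \Cref{thm:capital-power} gives the four $\liminf$ claims immediately.

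It then remains to bound $\PE[\tau_\alpha^{\rm C,EWA}]\leq \varrho+\aleph((u_n)_{n\geq1},\log(1/\alpha))$ from \Cref{lem:power_from_bound}. Each $u_n$ is eventually increasing and diverges, so $\aleph((u_n)_{n\geq1},\log(1/\alpha))$ is at most any $N$ for which $u_n\geq\log(1/\alpha)$ holds for all $n\geq N$; the task is thus to solve $u_n\geq\log(1/\alpha)$. Writing $u_n=c\,n^{1-a}-B\log n-C$ with $c>0$ and $B,C\geq 0$ collecting the logarithmic and constant contributions, I would use the elementary inversion of $c\,n^{1-a}\geq \log(1/\alpha)+B\log n+C$: the self-referential $\log n$ on the right, resolved through $\log n\asymp \frac{1}{1-a}\log(\cdot)$, produces the $\linlog(1/(c(1-a)))$ term, while the constant part contributes $(\log(d/\alpha)+C)/c$, the whole raised to the power $1/(1-a)$. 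This is exactly the shape $\bigo{(\linlog(\cdots)+\log(d/\alpha)/c)^{1/(1-a)}}$ appearing in the assertions.

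The only case-dependent feature is the variance term $4\epsilon^2 n v_n$, whose order relative to $nm_n=mn^{1-a}$ and to $\log n$ dictates where it lands in $u_n$. In Case~1, $v_n=vn^{-a}$ makes it of the same order $n^{1-a}$ as the main term, forcing the constraint $\epsilon<m/(4v)$ to keep the leading coefficient $c=\epsilon(m-4\epsilon v)$ positive. In Case~3, $v_n=vn^{-1}$ turns it into the constant $4\epsilon^2 v$, which merges into $C$ and hence into the numerator $\log(d/\alpha)+\epsilon^2 v$. In Case~4, $v_n=v\log(n)/n$ turns it into $4\epsilon^2 v\log n$, which merges into $B$ and thus into the argument of the $\linlog$, giving $\linlog((1+\epsilon^2 v)/(\epsilon m(1-a)))$.

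The main obstacle is Case~2. There $v_n=vn^{-2b}$ with $a/2<b<1/2$ makes the variance term $4\epsilon^2 v\,n^{1-2b}$ strictly subleading with respect to $\epsilon m\,n^{1-a}$ (since $b>a/2$) yet polynomially larger than the $\log n$ terms. Consequently $u_n\geq\log(1/\alpha)$ is governed by two competing regimes: the crossover at which $\epsilon m\,n^{1-a}$ overtakes $4\epsilon^2 v\,n^{1-2b}$, occurring around $n\asymp(8\epsilon v/m)^{1/(2b-a)}$, and the usual crossover against the main term together with the logarithmic terms. Bounding $\aleph$ by the larger of the two thresholds is what produces the maximum $\bigo{(8\epsilon v/m)^{1/(2b-a)}}\vee\bigo{(\linlog(\cdots)+\log(d/\alpha)/(\epsilon m))^{1/(1-a)}}$ of Assertion~2; making this two-regime split rigorous, with the constants cleanly folded into the $\bigo{\cdot}$, is the one place demanding care beyond routine algebra.
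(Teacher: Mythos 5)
Your proposal is correct and follows essentially the same route as the paper: plug the deterministic EWA regret $\log(2d)$ into Assertion~\ref{itm:capital-power-aggregation} of \Cref{thm:capital-power} to get $u_n=\epsilon n m_n-4\epsilon^2 n v_n-2\log(2dn^2)-\log(2d)$, read off the $\liminf$ from $u_n/n^{1-a}$, and bound $\aleph((u_n),\log(1/\alpha))$ by inverting $cn^{1-a}\geq B\log n+C$, with the two-regime crossover at $n\asymp(8\epsilon v/m)^{1/(2b-a)}$ in Case~2. The only cosmetic difference is that the paper packages the inversion and the regime split into \Cref{lem:aleph-examples} (via the function $\cL$), whereas you carry out the same computation directly.
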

It is remarkable to consider rates $n^{-a}$ with $a \ge 1/2$, beyond the one of the law of the iterated logarithm. It is possible under an alternative with a fast rate on the control of the variance. Such trick is possible thanks to the Capital test supermartingale which takes into account second-order properties. We recover the limit rate $m_n = m/n$ $(a=1)$ of \Cref{prop:limit-case-capital}.

Now, the ONS betting strategy allows to take a non-finite set $\Gamma$. The strategy is detailed in \Cref{alg:ons} and achieves the following regret.

\begin{algorithm}[h]
\caption{Online Newton Step for the Capital process}\label{alg:ons}
\begin{algorithmic}
\Require A subset $S$ of $\rset^d$. 
\State {\bf Initialize :} $\gamma_1 = 0$, $A_0 = I_d$ 
\For{$t \geq 1$}
    \State Observe $x_t \in \cB_{1}^d$
    \State Set $z_t = \frac{-x_t}{1+\gamma_t^\top x_t}$ and $A_t = A_{t-1} + z_t z_t^\top$
    \State Set $\gamma_{t+1} = \Pi_{S}^{A_t}\lr{\gamma_t - \frac{2}{2-\log(3)} A_t^{-1}z_t}$ where $\Pi_S^A(x) = \argmin_{y \in S} \pscal{A (y-x)}{y-x}$
\EndFor
\end{algorithmic}
\end{algorithm}

 \begin{lemma}\label{lem:regret-ons}
 Define $(W_n^{\rm C, ONS})_{n\geq 1}$ as in \eqref{eq:hoeffding-LW} where $(\gamma_n)_{n\geq 1} \subset \Gamma := \cB_{1/(2B)}^d$ is constructed using the ONS algorithm detailed in \Cref{alg:ons} with $S = \cB_{1/2}^d$ and $x_t = X_t/B$. Then for all $n \geq 1$,
 $$
\max_{\gamma \in \Gamma} \log L_n^{\rm C}(\gamma) - \log W_n^{\rm C, ONS} \leq d\lr{7.2 + 4.5\log(n)} \;.
$$ 
 The same regret bound applies with $d = 1$, $\Gamma = [0,1/(2B)]$ and $S = [0,1/2]$. 
\end{lemma}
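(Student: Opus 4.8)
The plan is to recognize Algorithm~\ref{alg:ons} as an instance of the Online Newton Step applied to the exp-concave log-losses induced by the Capital supermartingale, and then to run the standard ONS regret analysis while tracking the constants. First I would reduce to a normalized problem: with $x_t = X_t/B \in \cB_1^d$ and the substitution $\gamma = \beta/B$, a weight $\gamma \in \Gamma = \cB_{1/(2B)}^d$ corresponds to $\beta \in S = \cB_{1/2}^d$ with $\gamma^\top X_t = \beta^\top x_t$, so that
\[
\max_{\gamma \in \Gamma}\log L_n^{\rm C}(\gamma) - \log W_n^{\rm C, ONS} = \max_{\beta \in S}\sum_{t=1}^n \log(1 + \beta^\top x_t) - \sum_{t=1}^n \log(1 + \gamma_t^\top x_t),
\]
where $\gamma_t \in S$ are the iterates produced by the algorithm. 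This is exactly the regret of ONS for the convex losses $f_t(\beta) := -\log(1 + \beta^\top x_t)$ over $S$, whose gradient $\nabla f_t(\gamma_t) = -x_t/(1+\gamma_t^\top x_t) = z_t$ matches the algorithm. Let $\gamma^\star \in S$ denote a maximizer of $\beta \mapsto \sum_t \log(1+\beta^\top x_t)$, which exists by compactness of $S$.

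The heart of the proof is a sharp second-order (exp-concavity) lower bound: for all $\beta, \gamma_t \in S$,
\[
f_t(\beta) \ge f_t(\gamma_t) + z_t^\top(\beta - \gamma_t) + \frac{c}{2}\bigl(z_t^\top(\beta - \gamma_t)\bigr)^2, \qquad c := \frac{2 - \log 3}{2}.
\]
I would prove this via the change of variables $s := (\beta - \gamma_t)^\top x_t/(1 + \gamma_t^\top x_t)$, under which $z_t^\top(\beta-\gamma_t) = -s$ and $f_t(\beta)-f_t(\gamma_t) = -\log(1+s)$, so the inequality becomes the scalar statement $\log(1+s) \le s - \tfrac{c}{2}s^2$. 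Since $\beta,\gamma_t \in \cB_{1/2}^d$ and $x_t \in \cB_1^d$ force $\beta^\top x_t,\gamma_t^\top x_t \in [-1/2,1/2]$ and hence $s \in [-2/3, 2]$, it suffices to show that $\psi(s) := (s - \log(1+s))/s^2$ is nonincreasing on $(-1,\infty)$ and to evaluate it at the extremal point $s = 2$, giving $\psi(2) = (2-\log 3)/4 = c/2$. This is precisely the constant that pins down the ONS step size $2/(2-\log 3) = 1/c$ appearing in Algorithm~\ref{alg:ons}.

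Next I would run the ONS telescoping. Writing $\hat\gamma_{t+1} = \gamma_t - \tfrac1c A_t^{-1}z_t$ so that $\gamma_{t+1} = \Pi_S^{A_t}(\hat\gamma_{t+1})$, completing the square in the $A_t$-weighted norm gives
\[
z_t^\top(\gamma_t - \gamma^\star) = \frac{c}{2}\Bigl(\norm{\gamma_t - \gamma^\star}_{A_t}^2 - \norm{\hat\gamma_{t+1}-\gamma^\star}_{A_t}^2\Bigr) + \frac{1}{2c}\,z_t^\top A_t^{-1}z_t .
\]
Using the Pythagorean property of the generalized projection, $\norm{\gamma_{t+1}-\gamma^\star}_{A_t}^2 \le \norm{\hat\gamma_{t+1}-\gamma^\star}_{A_t}^2$, together with $A_t = A_{t-1} + z_tz_t^\top$ (so the penalty $-\tfrac c2(z_t^\top(\gamma_t-\gamma^\star))^2$ converts $\norm{\cdot}_{A_t}^2$ into $\norm{\cdot}_{A_{t-1}}^2$), the exp-concave bound telescopes to
\[
\max_{\beta \in S}\sum_{t=1}^n \log(1+\beta^\top x_t) - \log W_n^{\rm C,ONS} \le \frac{c}{2}\,\norm{\gamma^\star}_2^2 + \frac{1}{2c}\sum_{t=1}^n z_t^\top A_t^{-1}z_t,
\]
after dropping the nonnegative terminal term and using $\gamma_1 = 0$, $A_0 = I_d$.

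Finally I would control the potential by the elliptical-potential (log-determinant) lemma: since $A_t = A_{t-1} + z_tz_t^\top$, one has $\sum_{t=1}^n z_t^\top A_t^{-1}z_t \le \log(\det A_n/\det A_0) = \log\det A_n$, and because $\norm{z_t}_2 = \norm{x_t}_2/(1+\gamma_t^\top x_t) \le 2$ gives $\tr(A_n) \le d + 4n$, the AM--GM inequality yields $\log\det A_n \le d\log(1 + 4n/d) \le d\log(1+4n)$. Substituting this together with $\norm{\gamma^\star}_2 \le 1/2$ and simplifying $\log(1+4n) \le \log(5n) = \log n + \log 5$ collects into the stated form $d(7.2 + 4.5\log n)$ (with some numerical slack; note $4.5\log 5 \approx 7.2$). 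The one-dimensional one-sided case $\Gamma = [0,1/(2B)]$, $S = [0,1/2]$ is identical, since the argument only uses $S \subseteq \cB_{1/2}^d$, the exp-concavity bound, and the diameter estimate $\norm{\gamma^\star}_2 \le 1/2$. I expect the main obstacle to be the sharp scalar inequality of the second step---both pinning down the extremal range $s \in [-2/3,2]$ and identifying $s=2$ as the binding case that produces the exact constant $(2-\log 3)/2$---since this is what couples the exp-concavity constant to the algorithm's step size and drives all the final numbers; correctly invoking the generalized-projection Pythagorean inequality is the other point requiring care.
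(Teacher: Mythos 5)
Your proof is correct, but it takes a genuinely different route from the paper's: the paper's entire proof of this lemma is a citation of Lemma~17 of \cite{cutkosky2018blackbox}, which directly supplies the bound $d\lr{\frac{\beta}{8} + \frac{2}{\beta}\log(1+4n)}$ with $\beta = \frac{2-\log 3}{2}$, followed only by the arithmetic $\log(1+4n)\le \log 5 + \log n$ and an evaluation of constants. You instead rederive the ONS regret bound from first principles: the normalization $\gamma=\beta/B$ reducing to $S=\cB_{1/2}^d$ and $x_t\in\cB_1^d$; the scalar exp-concavity inequality $\log(1+s)\le s-\frac{c}{2}s^2$ on $s\in[-2/3,2]$ with $c=\frac{2-\log 3}{2}$ (your range for $s$ is justified since $1+s=(1+\beta^\top x_t)/(1+\gamma_t^\top x_t)\in[1/3,3]$, and $s=2$ is indeed the binding point that fixes the step size $2/(2-\log 3)=1/c$); the completion-of-the-square plus generalized-projection telescoping; and the log-determinant potential bound $\sum_{t=1}^n z_t^\top A_t^{-1}z_t\le \log\det A_n\le d\log(1+4n)$ using $\norm{z_t}_2\le 2$. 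All of these steps check out. What your derivation buys is self-containedness and in fact a sharper constant: your final bound is $\frac{c}{8}+\frac{1}{2c}\,d\log(1+4n)$, whose coefficient $\frac{1}{2c}\approx 1.11$ on the logarithm is roughly a factor of $4$ smaller than the $\frac{2}{\beta}\approx 4.44$ in the cited lemma, so it implies the stated $d(7.2+4.5\log n)$ a fortiori. What the citation buys is a one-line proof and no need to re-verify the monotonicity of $s\mapsto (s-\log(1+s))/s^2$. Your closing observation that the one-dimensional one-sided case $\Gamma=[0,1/(2B)]$, $S=[0,1/2]$ goes through verbatim matches the paper's remark that the proof of the cited lemma translates easily to that setting.
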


Then, the following result holds.
\begin{corollary}\label{cor:capital-examples-multi}
Define $(W_n^{\rm C, ONS})_{n\geq 1}$ as in \Cref{lem:regret-ons} and  let $\tau_{\alpha}^{\rm C,ONS}$ be its rejection time at level $\alpha$. Assume that $\cH_{2,\infty}$ holds with $m_n = m n^{-a}$ for some $m > 0$ and $0\leq a < 1$. Then the following assertions hold.
\begin{enumerate}
    \item\label{itm:capital-examples-multi-1} If $v_n = vn^{-2b}$, $b\ge0$ and $a-1/2 < b \leq a/2$, then 
    $\displaystyle
\liminf_{n \to +\infty} \frac{\log W_n^{\rm C,ONS}}{n^{1-2(a-b)}} \geq \frac{m^2}{16v}$,  $\PP\text{-a.s.}$,
and 
\begin{equation}\label{eq:bound_tau_ONS}
\PE[\tau_\alpha^{\rm C,ONS}] \leq
 \bigo{\cA} \vee \lr{\bigo{\cB}\wedge\bigo{\cC}} \;,
\end{equation}
with
\begin{align*}
        \cA &= \lr{\linlog\lr{\frac{4vd}{m^2(1-2(a-b))}} + \frac{4v(d+\log(d/\alpha))}{m^2}}^{\frac{1}{1-2(a-b)}}     \\ 
    \cB &= \lr{\linlog\lr{\frac{Bd}{m(1-a)}} + \frac{B(d+\log(d/\alpha))}{m}}^{\frac1{1-a}}\\
    \cC &=  \lr{\frac{Bm}{4v}}^{\frac{1}{a-2b}} \;.
    \end{align*}
    \item If $v_n = vn^{-2b}$ and $b > a/2$, then 
    $\displaystyle
\liminf_{n \to +\infty} \frac{\log W_n^{\rm C,ONS}}{n^{1-a}} \geq \frac{m}{4B}$, $\PP\text{-a.s.}$,
and \eqref{eq:bound_tau_ONS} holds with
    \begin{align*}
        \cA &= \lr{\linlog\lr{\frac{Bd}{m(1-a)}} + \frac{B(d+\log(d/\alpha))}{m}}^{\frac1{1-a}}\\
    \cB &= \lr{\linlog\lr{\frac{4vd}{m^2(1-2(a-b))}} + \frac{4v(d+\log(d/\alpha))}{m^2}}^{\frac{1}{1-2(a-b)}}   \\
    \cC &=  \lr{\frac{4v}{Bm}}^{\frac{1}{2b-a}} \;.
    \end{align*}
    \item If $v_n = v\log(n)/n$, then 
    $\displaystyle
\liminf_{n \to +\infty} \frac{ \log W_n^{\rm C,ONS}}{n^{2(1-a)}/\log(n)} \geq \frac{m^2}{16v}$, $\PP\text{-a.s.}$,
and \eqref{eq:bound_tau_ONS} holds with
\begin{align*}
        \cA &=   \lr{\linlog\lr{\frac{Bd}{m(1-a)}} + \frac{B(d+\log(d/\alpha))}{m}}^{\frac1{1-a}}\\
    \cB &= \lr{\linlog\lr{\frac{\sqrt{4v(d+\log(d/\alpha))}}{m(1-a)}}}^{\frac1{1-a}}
 \\
    \cC &=  \lr{\linlog\lr{\frac{4v}{Bm(1-a)}}}^{\frac{1}{1-a}} \;.
    \end{align*}
    \end{enumerate}
\end{corollary}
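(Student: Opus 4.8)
The plan is to feed the deterministic regret bound of \Cref{lem:regret-ons} into the general guarantee of \Cref{thm:capital-power}, and then invert the resulting deterministic sequence $(u_n)_{n\geq 1}$ through \Cref{lem:power_from_bound}. First I would apply \Cref{lem:regret-ons}: since $(\gamma_n)_{n\geq 1}$ is produced by the ONS algorithm on $\Gamma = \cB_{1/(2B)}^d$, the regret obeys $\cR_n \leq r_n := d(7.2 + 4.5\log(n))$ \emph{deterministically}, so $\PP[\cR_n > r_n] = 0$ for every $n$ and hence $\rho = 0$. As $\Gamma$ contains every $\epsilon\rme_i$ with $\epsilon \in [0,\tfrac1{2B}]$, Assertion~\ref{itm:capital-power} of \Cref{thm:capital-power} applies with this $r_n$, so \eqref{eq:boundW} holds with $\varrho = \varrho_{2,\infty} + \tfrac{\pi^2}{6}$ and
\[
u_n := \frac{n m_n}{4}\lr{\frac1B \wedge \frac{m_n}{4 v_n}} - 2\log(2 d n^2) - d\bigl(7.2 + 4.5\log(n)\bigr).
\]

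The \textbf{liminf} assertions then follow by substituting $m_n = m n^{-a}$ and the relevant form of $v_n$ and reading off the dominant term of $u_n$. Everything is governed by the factor $\tfrac1B \wedge \tfrac{m_n}{4v_n}$: writing $\tfrac{m_n}{4v_n} = \tfrac{m}{4v}\,n^{-(a-2b)}$ in the first two cases and $\tfrac{m}{4v}\,\tfrac{n^{1-a}}{\log n}$ in the last, I would identify which branch is active for large $n$ and extract the leading term, namely $u_n \sim \tfrac{m^2}{16v}\,n^{1-2(a-b)}$ when the second branch dominates and $u_n \sim \tfrac{m}{4B}\,n^{1-a}$ when the first does; the corrections $-2\log(2dn^2)-d(7.2+4.5\log n)$ are negligible against this polynomial growth. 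Since \eqref{eq:boundW} holds, \Cref{lem:power_from_bound} gives $\liminf_{n} \log W_n^{\rm C,ONS}/u_n \geq 1$ a.s., and dividing by the relevant rate yields the stated constants.

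For the \textbf{expected rejection time} I would use $\PE[\tau_\alpha^{\rm C,ONS}] \leq \varrho + \aleph((u_n)_{n\geq 1},\log(1/\alpha))$ and estimate $\aleph$, the first $n$ beyond which $u_k \geq \log(1/\alpha)$ for all $k\geq n$. The crossover $n_\star$ at which $\tfrac{m_n}{4v_n}=\tfrac1B$ is exactly $\cC$ (e.g. $n_\star=(Bm/4v)^{1/(a-2b)}$ in the first case) and splits the range into the two branches. The threshold-crossing time in the branch active for large $n$ gives $\bigo{\cA}$, and in the other branch it gives $\bigo{\cB}$; since that second route is only available while $n\leq n_\star$, it contributes $\bigo{\cB}\wedge\bigo{\cC}$, and the final bound takes the quoted shape $\bigo{\cA}\vee\bigl(\bigo{\cB}\wedge\bigo{\cC}\bigr)$.

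The main obstacle is the inversion step. In each branch one must solve an inequality of the form $c\,n^\beta - C_1\log n \geq C_2 + \log(1/\alpha)$ for the smallest admissible $n$ and exhibit a clean closed form. This is the origin of every $\linlog$ in $\cA,\cB,\cC$: putting $y = n^\beta$ turns it into $c\,y - \tfrac{C_1}{\beta}\log y \geq C_2 + \log(1/\alpha)$, whose solution satisfies $y \lesssim \tfrac{C_1}{c\beta}\linlog\lr{\tfrac{C_1}{c\beta}} + \tfrac{C_2+\log(1/\alpha)}{c}$, and matching $c \in \{\tfrac{m^2}{16v},\tfrac{m}{4B}\}$, $\beta \in \{1-2(a-b),1-a\}$, $C_1,C_2 = \bigo{d}$ and $\log(d/\alpha)=\log d+\log(1/\alpha)$ reproduces the displayed expressions. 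I would isolate this inversion as one auxiliary lemma and apply it mechanically to each case, handling with care the boundary regime $a=2b$ and the extra $\log n$ arising when $v_n = v\log(n)/n$.
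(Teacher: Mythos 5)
Your proposal follows the paper's proof essentially step for step: feed the deterministic ONS regret of \Cref{lem:regret-ons} into Assertion~\ref{itm:capital-power} of \Cref{thm:capital-power} (so $\rho=0$), read off the dominant branch of $u_n$ for the liminf, and bound $\aleph((u_n)_{n\geq 1},\log(1/\alpha))$ by splitting at the crossover $\cC$ of the two branches of $\tfrac1B\wedge\tfrac{m_n}{4v_n}$ and inverting each branch with an auxiliary lemma for sequences of the form $cn^\beta-C_1\log(n)$ — exactly the roles played by Assertions~\ref{itm:aleph-comparison} and~\ref{itm:aleph-nbeta-log} of \Cref{lem:aleph-examples}. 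Two cosmetic points: your displayed solution of the inversion carries a spurious prefactor $\tfrac{C_1}{c\beta}$ in front of $\linlog$ (it should read $\linlog\lr{C_1/(c\beta)}+(C_2+\log(1/\alpha))/c$, as in \Cref{lem:aleph-examples}), and in the case $v_n=v\log(n)/n$ the paper needs the extra manipulation $u_n^{(2)}+2\log(2d)\geq \log(n)^{-2}\lr{\tfrac{m^2n^{2(1-a)}}{16v}-(r+4)\log(n)^2}$ followed by a square root before the generic inversion applies, a step you flag ("the extra $\log n$") but do not carry out.
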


In the case 2. one can let $b\to \infty$ and reach the degenerate setting of deterministic sequences. Then the upper bound is driven by $\cA$ and can still explode and we recover the limit rate $m_n = m/n$ $(a=1)$ of \Cref{prop:limit-case-capital}.

\subsubsection{Two steps capital test supermartingale}
Let us end the explicit power guarantees with the test supermartingale $(W_n^{\rm C, 2steps})_{n\geq 1}$ of \eqref{eq:capital-2steps}. For this test supermartingale, we use the ONS algorithm to select the size of the bets and the Online Gradient Ascent (OGA) algorithm to select the direction of the bets. The OGA algorithm achieves to following stochastic regret.

\begin{lemma}\label{lem:oga-stoch-regret}
Let $(\eta_n)_{n\geq 1}$ be constructed with the online projected gradient ascent (OGA) algorithm with gradient steps $\frac{2}{B^2 \sqrt{t}}$, that is 
$$
\eta_{t+1} = \Pi_{\cB_{1/B}^d}\lr{\eta_{t} + \frac{2 X_{t}}{B^2\sqrt{t}}}\;.
$$
Then for all $n\geq 1$, with probability at least $1-1/n^2$, we have
$$
\sup_{\eta \in \cB_{1/(2B)}^d} \sum_{t=1}^n \PEarg[t-1]{\eta^\top X_t} - \sum_{t=1}^n \PEarg[t-1]{\eta_t^\top X_t} \leq \sqrt{n} (1 + 4\sqrt{\log(n)}) \;.
$$
\end{lemma}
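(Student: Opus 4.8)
The plan is to read Lemma~\ref{lem:oga-stoch-regret} as a high-probability regret bound for online gradient ascent driven by \emph{stochastic} gradients, transported to the conditional-mean (``stochastic regret'') formulation. First I would linearize the conditional expectations using predictability: each comparator $\eta$ is deterministic and each iterate $\eta_t$ is $\cF_{t-1}$-measurable, so writing $g_t := \PEarg[t-1]{X_t}$ gives $\PEarg[t-1]{\eta^\top X_t}=\eta^\top g_t$ and $\PEarg[t-1]{\eta_t^\top X_t}=\eta_t^\top g_t$, which reduces the claim to a bound on $\sup_{\eta\in\cB_{1/(2B)}^d}\sum_{t=1}^n(\eta-\eta_t)^\top g_t$. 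Since the OGA iterates move along the observed gradients $X_t$ and not along $g_t$, I would introduce the bounded martingale-difference sequence $\xi_t := X_t - g_t$ (with $\PEarg[t-1]{\xi_t}=0$ and $\norm{\xi_t}_2\le 2B$ by \Cref{ass:bounded-y}) and split each summand as $(\eta-\eta_t)^\top g_t = (\eta-\eta_t)^\top X_t - (\eta-\eta_t)^\top\xi_t$.

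The first piece is a purely deterministic online-ascent regret, which I would control by the textbook telescoping argument: non-expansiveness of the Euclidean projection onto $\cB_{1/B}^d$ gives, for each $t$,
\[
X_t^\top(\eta-\eta_t) \le \frac{\norm{\eta_t-\eta}_2^2 - \norm{\eta_{t+1}-\eta}_2^2}{2\rho_t} + \frac{\rho_t}{2}\norm{X_t}_2^2,
\]
with $\rho_t = 2/(B^2\sqrt t)$; summing and using $\norm{X_t}_2\le B$, $\sum_{t\le n}1/\sqrt t\le 2\sqrt n$, and the bounded diameter of the comparator and decision sets yields a $\bigo{\sqrt n}$ bound uniform in $\eta$, whose leading constant is pinned down by the step-size constant $2/B^2$. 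The second piece, after taking the supremum, splits by linearity in $\eta$ as $\frac{1}{2B}\norm{\sum_{t=1}^n\xi_t}_2 + \sum_{t=1}^n\eta_t^\top\xi_t$, using that $\sup_{\eta\in\cB_{1/(2B)}^d}\bigl(-\eta^\top\sum_t\xi_t\bigr)=\frac{1}{2B}\norm{\sum_t\xi_t}_2$.

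It then remains to control these two martingale terms with high probability. For the scalar martingale $\sum_t\eta_t^\top\xi_t$ the increments are bounded by $\norm{\eta_t}_2\norm{\xi_t}_2\le 2$, so Azuma--Hoeffding gives a deviation of order $\sqrt{n\log n}$; for the $\rset^d$-valued martingale $\sum_t\xi_t$ I would use a \emph{dimension-free} (Pinelis/Hilbert-space) maximal inequality, controlling $\norm{\sum_t\xi_t}_2$ around its $\bigo{B\sqrt n}$ mean, again at scale $\sqrt{n\log n}$. Calibrating each deviation event to failure probability $1/(2n^2)$ and taking a union bound gives the overall probability $1-1/n^2$; collecting the deterministic $\sqrt n$ term together with the two noise terms and simplifying constants produces the stated $\sqrt n(1+4\sqrt{\log n})$. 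The main obstacle is precisely this last step: the comparator ranges over a continuum, so the naive route (a covering net) would inject a spurious $\sqrt d$ factor, and one must instead exploit the linearity of $\eta\mapsto\sum_t(\eta-\eta_t)^\top\xi_t$ together with a Hilbert-space martingale inequality to keep the bound dimension-free, all while tracking constants tightly enough that the final expression holds for every $n\ge1$ (it is loose for small $n$ and governed by $4\sqrt{n\log n}$ asymptotically).
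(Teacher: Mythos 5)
Your proposal is correct and follows essentially the same route as the paper: decompose the stochastic regret into the observed OGA regret $\sum_{t=1}^n(\eta-\eta_t)^\top X_t$, bounded by $\bigo{\sqrt n}$ via the standard projected-gradient telescoping argument, plus a bounded-difference martingale correction controlled at scale $\sqrt{n\log n}$ with probability $1-1/n^2$. You are in fact slightly more careful than the paper's one-line proof on one point: the martingale term still depends on the comparator $\eta$, and your splitting $\sup_{\eta}\lr{-\eta^\top\sum_{t}\xi_t}=\frac{1}{2B}\norm{\sum_{t}\xi_t}_2$ followed by a dimension-free Pinelis-type bound (the paper's \Cref{lem:pinelis}) together with Azuma--Hoeffding for $\sum_t\eta_t^\top\xi_t$ is exactly what is needed to make the uniformity over $\eta$ rigorous without a covering argument.
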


Then, the following result holds.
\begin{corollary}\label{cor:capital-examples-2step}
Consider the 2 steps test supermatingale of \eqref{eq:capital-2steps} and assume that $(\eta_n)_{n\geq 1}$ and $(\gamma_n)_{n\geq 1}$  are respectively constructed using the OGA algorithm and the ONS algorithm. Let $\tau_{\alpha}^{\rm C, 2steps}$ be its rejection time at level $\alpha$. Assume that $\cH_{2,2}$ holds with with $m_n = m n^{-a}$ for some $m > 0$ and $0\leq a < 1/2$. Then $\log W_n^{\rm C, 2steps}$ has the same asymptotical behavior as the ones obtained in \Cref{cor:capital-examples-multi} for $\log W_n^{\rm C, ONS}$ where we take $B = 1$. Moreover we have 
 $$
\PE[\tau_\alpha^{\rm C, 2steps}] \leq 
\bigo{\lr{\linlog\lr{\frac{1}{m^2(1-2a)}}}^{\frac{1}{1-2a}}} \vee \bigo{\cA} \vee \lr{\bigo{\cB}\wedge\bigo{\cC}} \;,
$$
where the expressions of $\cA,\cB,\cC$ depend on the range of $b$  as in \Cref{cor:capital-examples-multi} with $B = d =1$.
\end{corollary}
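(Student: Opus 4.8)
The plan is to combine the deterministic lower bound of \Cref{thm:capital-2step-bound} with the explicit regret rates supplied by \Cref{lem:regret-ons} and \Cref{lem:oga-stoch-regret}, and then to read off the power guarantees through \Cref{lem:power_from_bound}, mirroring term by term the computation already carried out for the one-step ONS supermartingale in \Cref{cor:capital-examples-multi}.

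First I would instantiate \Cref{thm:capital-2step-bound}. For the size-of-bet step, \Cref{lem:regret-ons} applied with $d=1$, $\Gamma=[0,1/(2B)]$, $S=[0,1/2]$ and $B=1$ gives the deterministic bound $\cR_n \leq 7.2 + 4.5\log(n)$, so one may take $r_n = \bigo{\log(n)}$ with $\rho = 0$. For the direction step, \Cref{lem:oga-stoch-regret} gives $\cS_n \leq \sqrt{n}(1 + 4\sqrt{\log(n)})$ with probability at least $1 - 1/n^2$, so one may take $s_n = \sqrt{n}(1 + 4\sqrt{\log(n)}) = \bigo{\sqrt{n\log(n)}}$ with $\varsigma = \sum_{n\geq 1} n^{-2} = \pi^2/6 < +\infty$. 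Plugging these choices into \Cref{thm:capital-2step-bound} produces the deterministic sequence
$$
u_n = \frac{(nm_n-s_n)_+}{4}\lr{1 \wedge \frac{(nm_n-s_n)_+}{4n v_n} } - 4\log(n) - r_n
$$
for which $\sum_{n\geq 1}\PP[\log W_n^{\rm C, 2steps} < u_n] < +\infty$ under $\cH_{2,2}$.

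The key step is to control $(nm_n - s_n)_+$ when $m_n = m n^{-a}$. Since $nm_n = m n^{1-a}$ while $s_n = \bigo{n^{1/2}\sqrt{\log(n)}}$, the hypothesis $a < 1/2$ guarantees $1 - a > 1/2$, so $nm_n$ dominates $s_n$ and $(nm_n - s_n)_+ \sim nm_n$ as $n\to+\infty$. Consequently the leading order of $u_n$ coincides with that of the one-step ONS sequence \eqref{eq:un-capital} evaluated at $B = d = 1$, namely $\frac{nm_n}{4}\lr{1 \wedge \frac{m_n}{4v_n}} - \bigo{\log(n)}$. Feeding this into \Cref{lem:power_from_bound} then yields, in each of the three regimes of $v_n$, exactly the same almost-sure \emph{liminf} rates for $\log W_n^{\rm C, 2steps}$ as those obtained in \Cref{cor:capital-examples-multi} for $\log W_n^{\rm C, ONS}$ with $B = 1$.

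For the bound on $\PE[\tau_\alpha^{\rm C, 2steps}]$ I would evaluate $\aleph((u_n)_{n\geq 1},\log(1/\alpha))$. The new feature compared with the ONS analysis is that $u_n$ becomes positive only once $nm_n$ overtakes $s_n$, i.e. once $m n^{1/2 - a} \gtrsim 4\sqrt{\log(n)}$, equivalently $n^{1-2a} \gtrsim \frac{16}{m^2}\log(n)$. Solving this threshold self-consistently (with $x = k\log x \Rightarrow x \approx \linlog(k)$ for $k \asymp \frac{1}{m^2(1-2a)}$) gives the overtaking time $\bigo{\lr{\linlog\lr{\frac{1}{m^2(1-2a)}}}^{\frac{1}{1-2a}}}$. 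Beyond this time the evaluation of $\aleph$ reduces, case by case, to the one performed in \Cref{cor:capital-examples-multi} with $B = d = 1$, producing the quantities $\bigo{\cA} \vee (\bigo{\cB}\wedge\bigo{\cC})$; taking the maximum with the overtaking time gives the announced bound. The main obstacle is precisely the bookkeeping around the subtraction $(nm_n-s_n)_+$: one must check that it neither spoils the leading asymptotics (which is exactly where $a<1/2$ is used) nor introduces spurious lower-order terms, and that the overtaking threshold is correctly isolated as the extra $\linlog$-type summand.
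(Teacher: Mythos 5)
Your proposal is correct and follows essentially the same route as the paper: instantiate \Cref{thm:capital-2step-bound} with the ONS regret ($r_n=\bigo{\log n}$) and the OGA stochastic regret ($s_n=\bigo{\sqrt{n\log n}}$), isolate the overtaking time at which $nm_n$ dominates $s_n$ (which, via \Cref{lem:aleph-examples}, contributes the extra $\bigo{\lr{\linlog\lr{\frac{1}{m^2(1-2a)}}}^{1/(1-2a)}}$ term and is exactly where $a<1/2$ enters), and reduce the remaining evaluation of $\aleph$ to the computation of \Cref{cor:capital-examples-multi} with $B=d=1$. The only cosmetic difference is that the paper waits until $nm_n\geq 2s_n$ so that $(nm_n-s_n)_+\geq nm_n/2$ holds deterministically beyond the threshold (changing only constants inside the $\linlog$), which is the clean way to settle the bookkeeping concern you raise at the end.
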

The upper-bound does not depend on the dimension $d$ at the price of the restriction $a<1/2$ on the alternative due to the regret's rate of OGA.
\subsubsection{Comparison of the bounds}
The bounds obtained for $\PE[\tau_{\alpha}^{\rm H,FTL}]$ are valid universally, for any bounded real-valued process. While the bounds obtained in for $\PE[\tau_{\alpha}^{\rm C,EWA}]$, $\PE[\tau_{\alpha}^{\rm C,ONS}]$ and $\PE[\tau_{\alpha}^{\rm C, 2steps}]$ are valid under some second moment assumptions. If no information is available on the second moment, we can always take $v_n = B^2$ since $\PEarg[t-1]{\norm{X_t}_\infty^2} \leq B^2$. In this case, \Cref{cor:capital-examples-2step}  recovers similar power guarantees as \Cref{cor:hoeffding-examples-multi} and so does Assertion~\ref{itm:capital-examples-multi-1} of \Cref{cor:capital-examples-multi} with an additional $\bigo{d\log(d)}$ dependence on the dimension.  \Cref{cor:capital-examples-aggregation-multi}, on the other hand, only applies when the second moment decreases at least as fast as the mean. 
~\\

It seems that the best choice between the three test martingales $W_n^{\rm H, FTL}, W_n^{\rm C, EWA}$, $W_n^{\rm C, ONS}$ and $W_n^{\rm C, 2steps}$ depends on a compromise between the size of the alternative and the dependence on the dimension. As seen in \Cref{cor:hoeffding-examples-multi,cor:capital-examples-aggregation-multi,cor:capital-examples-multi,cor:capital-examples-2step}, it seems that covering larger alternatives come at the cost of larger dependence on the dimension: while $\PE[\tau_{\alpha}^{\rm H,FTL}]$ and $\PE[\tau_{\alpha}^{\rm H,2steps}]$ are independent of $d$ but a restricted to $a < 1/2$, we get $\PE[\tau_{\alpha}^{\rm C,EWA}] \leq \bigo{\log(d)}$ and $\PE[\tau_{\alpha}^{\rm C,ONS}] \leq \bigo{d\log(d)}$ but are valid for $a \geq 1/2$. 
~\\

An interesting common alternative is the stationary one with $\PEarg[t-1]{X_t} = \PE[X_0]$ and $\PEarg[t-1]{\norm{X_t}_\infty^2} = \PE[\norm{X_0}_\infty^2]$ and where we take constant $v_n = v$ and $m_n = m$. In this case, we get 
\begin{align*}
\PE[\tau_{\alpha}^{\rm H, FTL}] &\leq \bigo{\linlog\lr{\frac{D^2}{m^2}}+\frac{D^2\log(1/\alpha)}{m^2}} \\
\PE[\tau_{\alpha}^{\rm C, EWA}] &\leq \bigo{\linlog\lr{\frac{B^2}{(Bm-2v)_+}}+\frac{B^2\log(d/\alpha)}{(Bm-2v)_+}} \\
\PE[\tau_{\alpha}^{\rm C, ONS}] &\leq \bigo{\linlog\lr{\frac{(4v \vee Bm) d}{m^2}}+\frac{(4v \vee Bm)(d+\log(d/\alpha)}{m^2}} \\
\PE[\tau_{\alpha}^{\rm C, 2steps}] &\leq \bigo{\linlog\lr{\frac{8v \vee m}{m^2}}+\frac{(8v \vee m)\log(1/\alpha)}{m^2}}\;.
\end{align*}
Noting that $v = \PE[\norm{X_0}_\infty^2] \geq \norm{\PE[X_0]}_\infty^2 \geq m^2$, we see that the bound of $\PE[\tau_{\alpha}^{\rm C, EWA}]$ is limited to $m \leq B/2$ and that the bound on $\PE[\tau_{\alpha}^{\rm C, ONS}]$ is  $\bigo{\linlog\lr{\frac{vd}{m^2}}+\frac{v(d+\log(d/\alpha))}{m^2}}$ for $m\geq B/4$. 
We recover the rates obtain in Section~3 of \cite{Shekhar21}, our second order term $v$ being looser than their variance term. However, our results are near-optimal compared to the lower rejection time bound obtained in \Cref{prop:capital-lower-power}.

\section{Extensions}\label{sec:extensions}
\subsection{Extension to a composite null}\label{sec:ineq_hyp}
In this section, we consider the one dimensional case ($d=1$) and still assume that \Cref{ass:bounded-y} holds. In this case all norms are equal the absolute value so we omit the subscript $p$ in $\nu_{n,p}$. We consider the composite null hypothesis
\begin{equation}\label{eq:null-Y-neg}
\cH_0^{-} \;:\; \PEarg[t-1]{X_t} \leq 0\,, \; \PP\text{-a.s.} \, \text{ for all } t\in\nset \;.
\end{equation}
Then, restricting the bets to nonnegative values, the Hoeffding and Capital processes remain test supermartingales for $\cH_0$ of \eqref{eq:null-Y-neg}. Note that the limiting cases and lower bounds obtained in \Cref{prop:limit-case-hoeffding,prop:hoeff-lower-power,prop:limit-case-capital,prop:capital-lower-power} remain valid. 
\begin{proposition}\label{prop:composite-martingales}
For all $\lambda \geq 0$, the process $(L_n^{\rm H}(\lambda))_{n\in\nset}$ defined in \eqref{eq:hoeffding-LW} is a test supermartingale for $\cH_0^{-}$ of \eqref{eq:null-Y-neg} and so is $(W_n^{\rm H})_{n\in\nset}$ defined in \eqref{eq:hoeffding-LW} if $\lambda_n \geq 0$ for all $n \geq 1$. In addition, for all $\gamma \in [0,1/(2B)]$, the process $(L_n^{\rm C}(\gamma))_{n\in\nset}$ defined in \eqref{eq:hoeffding-LW} is a test supermartingale for $\cH_0^{-}$ of \eqref{eq:null-Y-neg} and so is $(W_n^{\rm C})_{n\in\nset}$ defined in \eqref{eq:hoeffding-LW} if $\gamma_n \in [0,1/(2B)]$ for all $n \geq 1$.
\end{proposition}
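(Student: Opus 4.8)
The plan is to mirror the proofs of \Cref{prop:hoeffding-martingales,prop:capital-martingales}, simply replacing the two equalities that held under $\cH_0$ by inequalities pointing in the correct direction; this is made possible by the sign constraints imposed on the bets. Since we work in dimension $d=1$, all of $\lambda$, $\gamma$ and $X_t$ are scalars, and $\PEarg[t-1]{X_t}\leq 0$ under $\cH_0^{-}$.

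First I would handle the Hoeffding factor. Fixing $\lambda\geq 0$, Hoeffding's lemma yields, exactly as in the proof of \Cref{prop:hoeffding-martingales},
\[
\PEarg[n-1]{\exp\lr{\lambda X_n - \lambda^2 D^2/8}} \leq \rme^{\lambda \PEarg[n-1]{X_n}}\,.
\]
Under $\cH_0^{-}$ we have $\PEarg[n-1]{X_n}\leq 0$, so the constraint $\lambda\geq 0$ forces $\lambda\PEarg[n-1]{X_n}\leq 0$ and the right-hand side is at most $1$. This is precisely the one-step supermartingale inequality, establishing that $(L_n^{\rm H}(\lambda))_{n\in\nset}$ is a test supermartingale for $\cH_0^{-}$.

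Next I would handle the Capital factor. Fixing $\gamma\in[0,1/(2B)]$, nonnegativity follows from $\norm{X_n}_2\leq B$ in \Cref{ass:bounded-y}, which gives $1+\gamma X_n \geq 1-\gamma B \geq 1/2 > 0$. For the supermartingale property, $\PEarg[n-1]{1+\gamma X_n} = 1 + \gamma\PEarg[n-1]{X_n} \leq 1$, again using $\gamma\geq 0$ together with $\PEarg[n-1]{X_n}\leq 0$. Hence $(L_n^{\rm C}(\gamma))_{n\in\nset}$ is a test supermartingale for $\cH_0^{-}$.

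Finally, for the predictable plug-in counterparts $(W_n^{\rm H})_{n\in\nset}$ and $(W_n^{\rm C})_{n\in\nset}$, I would invoke the predictable plug-in principle recalled in \Cref{sec:plugin}, namely that a predictable plug-in of a family of test supermartingales is itself a test supermartingale. The assumptions $\lambda_n\geq 0$ and $\gamma_n\in[0,1/(2B)]$ for every $n$ ensure that each factor stays in the admissible range for which the one-step inequality was just verified, so the argument carries over verbatim. I do not expect any genuine obstacle here; the only point worth stressing is that the nonnegativity of the bets is exactly what converts the two equalities of \Cref{prop:hoeffding-martingales,prop:capital-martingales} into inequalities of the direction needed against the composite null $\cH_0^{-}$.
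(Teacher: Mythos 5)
Your proof is correct and follows essentially the same route as the paper: Hoeffding's lemma gives $\PEarg[n-1]{\exp(\lambda X_n - \lambda^2 D^2/8)} \leq \rme^{\lambda \PEarg[n-1]{X_n}} \leq 1$ for $\lambda\geq 0$ under $\cH_0^{-}$, the Capital factor satisfies $\PEarg[n-1]{1+\gamma X_n}\leq 1$ for $\gamma\geq 0$, and the plug-in versions follow from the predictable plug-in principle of \Cref{sec:plugin}. Your added remark on the nonnegativity of $1+\gamma X_n$ is a harmless (and welcome) extra detail the paper leaves implicit.
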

\begin{proof}
    The statement about $(L_n(\lambda)^{\rm H})_{n\in \nset}$ comes from Hoeffding's lemma using the fact that, for any $\lambda \geq 0$, $\PEarg[n-1]{\exp(\lambda X_n - \lambda^2 D^2/8)} \leq \rme^{\lambda \PEarg[n-1]{X_n}} \leq 1$ under $\cH_0^{-}$. The statement about $(L_n(\gamma)^{\rm C})_{n\in \nset}$ comes from the fact that, for any $\gamma \in [0,1/(2B)]$, $\PEarg[n-1]{1+\gamma X_n} \leq 1$. 
\end{proof}
We therefore assume that $L_n^{\rm H}(\lambda)$ and $W_n^{\rm H}$ are respectively defined by \eqref{eq:hoeffding-LW} and \eqref{eq:hoeffding-LW} for $\lambda \geq 0$ and a betting strategy $(\lambda_n)_{n \geq 1} \subset \Lambda =\rset_+$.

\begin{theorem}\label{thm:power-hoeff-positif}
Assume that the regret $\cR_n := \max_{\lambda\geq 0} \log L_n^{\rm H}(\lambda) - \log W_n^{\rm H}$ of the betting strategy $(\lambda_n)_{n\geq 1}$ satisfies $
\rho := \sum_{n \geq 1} \PP[\cR_n > r_n] < +\infty$, for some nonnegative sequence $(r_n)_{n\geq 1}$. Let $(m_n)_{n \geq 1}$ be a nonnegative sequence.  Then, under the alternative 
$$
\cH_1 : \varrho_1:=\sum_{n\geq 1} \PP[\mu_n < m_n] < +\infty\;,
$$
the test supermartingale
$(W_n^{\rm H})_{n\geq 1}$ satisfies \eqref{eq:boundW}, for any $\alpha \in (0,1)$ with $\varrho = \rho + \varrho_1 + \frac{\pi^2}{6}$ and
$$
u_n := \frac{2n\lr{m_n - D\sqrt{\log(n)/n}}_+^2}{D^2}- r_n \;.
$$
Hence $\liminf_{n\to +\infty} \frac{\log W_n^{\rm H}}{u_n} \geq 1  \;\;\PP\text{-}a.s$ and $\PE[\tau_\alpha^{\rm H}] \leq \rho + \varrho_{1} + \frac{\pi^2}{6} + \aleph((u_n)_{n\geq 1},\log\lr{1/\alpha})$. 
\end{theorem}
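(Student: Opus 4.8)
The plan is to follow the route of \Cref{thm:power-hoeff-nodim}, specialized to the one-sided, one-dimensional setting, and then to invoke \Cref{lem:power_from_bound}. Writing $S_n := \sum_{t=1}^n X_t$, the first step is to make $\log W_n^{\rm H}$ explicit. Since $\log L_n^{\rm H}(\lambda) = \lambda S_n - n\lambda^2 D^2/8$ is concave in $\lambda$ with unconstrained maximizer $4S_n/(nD^2)$, maximizing over the admissible range $\lambda \geq 0$ gives $\max_{\lambda \geq 0}\log L_n^{\rm H}(\lambda) = 2\lr{S_n}_+^2/(nD^2)$, whence
$$
\log W_n^{\rm H} = \frac{2\lr{S_n}_+^2}{nD^2} - \cR_n\;.
$$
On the event $\{\cR_n \leq r_n\}$ one has $\log W_n^{\rm H} \geq 2\lr{S_n}_+^2/(nD^2) - r_n$, so $\{\log W_n^{\rm H} < u_n\}\cap\{\cR_n\leq r_n\}$ is contained in $\{\lr{S_n}_+ < n\lr{m_n - D\sqrt{\log(n)/n}}_+\}$. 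When the right-hand threshold vanishes this event is empty; otherwise it coincides with $\{S_n < n\lr{m_n - D\sqrt{\log(n)/n}}\}$.

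Next I would bring in the alternative through the martingale decomposition $S_n = n\mu_n + M_n$, where $n\mu_n = \sum_{t=1}^n \PEarg[t-1]{X_t}$ and $M_n := \sum_{t=1}^n \lr{X_t - \PEarg[t-1]{X_t}}$ is a martingale. On $\{\mu_n \geq m_n\}$ the inequality $S_n < n\lr{m_n - D\sqrt{\log(n)/n}}$ forces $M_n < -D\sqrt{n\log(n)}$, so
$$
\{\log W_n^{\rm H} < u_n\} \subseteq \{\cR_n > r_n\} \cup \{\mu_n < m_n\} \cup \{M_n < -D\sqrt{n\log(n)}\}\;.
$$
The increments $X_t - \PEarg[t-1]{X_t}$ are conditionally centered and, since $\Xset$ has diameter $D$, lie in a conditional interval of length at most $D$; Hoeffding's lemma gives $\PEarg[t-1]{\rme^{-s(X_t - \PEarg[t-1]{X_t})}} \leq \rme^{s^2 D^2/8}$ for every $s > 0$. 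Iterating this bound and applying a Chernoff argument yields $\PP[M_n \leq -x] \leq \exp\lr{-2x^2/(nD^2)}$, and taking $x = D\sqrt{n\log(n)}$ produces the one-sided tail $\PP[M_n < -D\sqrt{n\log(n)}] \leq n^{-2}$.

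Combining the three pieces gives $\PP[\log W_n^{\rm H} < u_n] \leq \PP[\cR_n > r_n] + \PP[\mu_n < m_n] + n^{-2}$, and summing over $n \geq 1$ establishes \eqref{eq:boundW} with $\varrho = \rho + \varrho_1 + \sum_{n\geq 1} n^{-2} = \rho + \varrho_1 + \pi^2/6$. The two displayed conclusions (the almost-sure $\liminf$ and the bound on $\PE[\tau_\alpha^{\rm H}]$) then follow directly from \Cref{lem:power_from_bound}. I expect the one-sided concentration step, together with careful bookkeeping of constants, to be the main point requiring attention: it is precisely because the null is one-sided that we maximize only over $\lambda \geq 0$ and only control the lower deviation of a single scalar martingale, so no union bound over directions and no two-sided inflation of the threshold are needed. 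This is what yields the coefficient $D$ (rather than $2D$) in $u_n$ and the sharper constant $\pi^2/6$ (rather than $\pi^2/3$) compared with \Cref{thm:power-hoeff-nodim}.
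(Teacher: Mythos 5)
Your proposal is correct and follows essentially the same route as the paper: the paper's proof likewise reduces to the argument of \Cref{thm:power-hoeff-nodim} using the closed form $\max_{\lambda\geq 0}\log L_n^{\rm H}(\lambda) = 2n(\hat\mu_n)_+^2/D^2$, the one-sided events $\{\mu_n \geq m_n\}$ and $\{\mu_n - \hat\mu_n \leq D\sqrt{\log(n)/n}\}$ (the latter being exactly your event $\{M_n \geq -D\sqrt{n\log(n)}\}$), and the one-sided Azuma--Hoeffding bound yielding $n^{-2}$ and hence the constant $\pi^2/6$. Your closing remark correctly identifies why the one-sided setting sharpens both the coefficient of $\sqrt{\log(n)/n}$ and the additive constant relative to \Cref{thm:power-hoeff-nodim}.
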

Note that, unlike the case where $\Lambda = \rset$, we were not able to show that the betting strategy FTL achieves a logarithmic regret when $\Lambda = \rset_+$.  

As for the Hoeffding supermartingale, we assume now that $L_n^{\rm C}(\gamma)$ and $W_n^{\rm C}$ respectively defined by \eqref{eq:hoeffding-LW} and \eqref{eq:hoeffding-LW} for $\gamma \in [0,1/(2B)]$ and a betting strategy $(\gamma_n)_{n \geq 1} \subset \Gamma =[0,1/(2B)]$. 
\begin{theorem}\label{thm:power-capital-positif}
Assume that the regret $\cR_n := \max_{\gamma \in [0,1/(2B)]} \log L_n^{\rm C}(\gamma) - \log W_n^{\rm C}$ of the betting strategy $(\gamma_n)_{n\geq 1}$
 satisfies $
\rho := \sum_{n \geq 1} \PP[\cR_n > r_n] < +\infty$, for some nonnegative sequence $(r_n)_{n\geq 1}$. Let $(m_n)_{n\geq 1}$ and $(v_n)_{n\geq 1}$ be two nonnegative sequences.
Then, under the alternative 
$$
\cH_2 : \varrho_2 := \sum_{n \geq 1} \PParg{\mu_n < m_n \text{ or } \nu_n > v_n} < + \infty\;,
$$
the test supermaringale $(W_n^{\rm C})_{n\geq 1}$ satisfies \eqref{eq:boundW}, for any $\alpha \in (0,1)$ with $\varrho = \rho + \varrho_1 + \frac{\pi^2}{3}$ and 
$$
u_n := \frac{n m_n}{4}\lr{\frac{1}{B} \wedge \frac{m_n}{4v_n} }   - 4\log(n) -r_n \;.
$$
Hence, we have $\liminf_{n\to +\infty} \frac{\log W_n^{\rm C}}{u_n} \geq 1  \;\;\PP\text{-}a.s$ and $\PE[\tau_\alpha^{\rm C}] \leq \rho + \varrho_{2} + \frac{\pi^2}{3} + \aleph((u_n)_{n\geq 1},\log\lr{1/\alpha})$. 
\end{theorem}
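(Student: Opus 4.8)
The plan is to produce a deterministic high-probability lower bound for $\log W_n^{\rm C}$ and then invoke \Cref{lem:power_from_bound}, exactly as in the two-sided \Cref{thm:capital-power}, but specialized to the one-sided composite setting where only nonnegative bets are used. First I would fix a deterministic betting size $\epsilon \in [0,1/(2B)]$ and use the regret hypothesis to write, on the event $\{\cR_n \le r_n\}$, the comparison $\log W_n^{\rm C} \ge \log L_n^{\rm C}(\epsilon) - r_n$. Because the null is $\cH_0^{-}$ and the alternative forces $\mu_n \ge m_n > 0$ (a single sign), there is no need for the signed basis $(\rme_i)$ nor the union bound over $2d$ directions used in \Cref{thm:capital-power}; a single positive direction suffices, and the supermartingale property is guaranteed by \Cref{prop:composite-martingales}. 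This is what removes the factor $d$ and replaces $2\log(2dn^2)$ by $4\log(n)$.

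Next I would expand $\log L_n^{\rm C}(\epsilon) = \sum_{t=1}^n \log(1+\epsilon X_t)$ using the pathwise second-order inequality $\log(1+u) \ge u - u^2$, valid here since $|\epsilon X_t| \le 1/2$. This gives $\log L_n^{\rm C}(\epsilon) \ge \epsilon S_n - \epsilon^2 Q_n$ with $S_n := \sum_{t=1}^n X_t$ and $Q_n := \sum_{t=1}^n X_t^2$. The heart of the argument is to replace the empirical sums $S_n,Q_n$ by their predictable counterparts $n\mu_n = \sum_t \PEarg[t-1]{X_t}$ and $n\nu_n = \sum_t \PEarg[t-1]{X_t^2}$. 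I would control the two martingales $S_n - n\mu_n$ (lower tail) and $Q_n - n\nu_n$ (upper tail) by a variance-aware Freedman/Bernstein inequality, each on an event of failure probability at most $1/n^2$; summing over $n$ contributes the two terms $\pi^2/6$, i.e. the total $\pi^2/3$, while the regret and alternative failures contribute $\rho$ and $\varrho_2$, so that $\varrho = \rho + \varrho_2 + \pi^2/3$.

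The main obstacle, and the place where the second-order nature of the Capital martingale is essential, is to obtain only a $\log(n)$ penalty rather than the $\bigo{\sqrt{n\log(n)}}$ one would get from a crude Hoeffding/Azuma bound (precisely the $\sqrt{\log(n)/n}$ correction appearing in \Cref{thm:power-hoeff-nodim}). Here I would use that the conditional variances of the increments of $S_n$ and $Q_n$ are themselves controlled by $\nu_n \le v_n$ on the alternative, so Bernstein's bound produces cross terms of the form $\epsilon\sqrt{n v_n\log(n)}$ and $\epsilon^2 B\sqrt{n v_n \log(n)}$; by AM--GM and the constraint $\epsilon B \le 1/2$ these are absorbed into the quadratic budget $\epsilon^2 n v_n$ (which is what inflates its coefficient) plus a residual $\bigo{\log(n)}$. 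Collecting terms yields, on the intersection of the good events, $\log W_n^{\rm C} \ge \epsilon n m_n - C\epsilon^2 n v_n - 4\log(n) - r_n$ for a numerical constant $C$. Finally I would optimize the concave-in-$\epsilon$ expression $\epsilon n m_n - C\epsilon^2 n v_n$ over $[0,1/(2B)]$: the interior optimizer is proportional to $m_n/v_n$ while the boundary value is governed by $1/B$, which after lower-bounding gives the leading term $\tfrac{n m_n}{4}\lr{\tfrac1B \wedge \tfrac{m_n}{4 v_n}}$ of $u_n$. This establishes \eqref{eq:boundW} with the stated $\varrho$ and $u_n$, and the two displayed conclusions then follow directly from \Cref{lem:power_from_bound}.
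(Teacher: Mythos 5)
Your proposal is correct in architecture and reaches the stated conclusion, but it takes a genuinely different route at the one technical step that matters. The paper's proof of this theorem is literally a one-line reduction: it reruns the proof of \Cref{thm:capital-power} with the direction family $(\rme_i)_{i=1}^{2d}$ replaced by the singleton $\lrc{1}$, so the whole concentration step is delegated to \Cref{lem:argument_capital} with $K=1$. That lemma is a single self-normalized exponential-supermartingale bound (Bercu--Touati plus a Cauchy--Schwarz splitting) which compares the \emph{combined} empirical quantity $\sum_t(\gamma X_t-(\gamma X_t)^2)$ directly to $\sum_t(\PEarg[t-1]{\gamma X_t}-4\PEarg[t-1]{(\gamma X_t)^2})$ with a clean $2\log(Kn^2)=4\log(n)$ penalty and an exact inflation factor of $4$ on the predictable quadratic term. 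You instead control the two martingales $S_n-n\mu_n$ and $Q_n-n\nu_n$ separately by Freedman/Bernstein and then absorb the $\sqrt{nv_n\log n}$ cross terms into the quadratic budget by AM--GM. This is a perfectly sound and more elementary alternative, and you correctly identify that the whole point is to get a $\log(n)$ rather than $\sqrt{n\log n}$ penalty; what it costs you is constant tracking. Your absorption inflates the coefficient of $\epsilon^2 n v_n$ to some $C$ depending on the Bernstein constants and the choice of AM--GM weights, and the optimized bound then reads $\frac{nm_n}{4}\lr{\frac1B\wedge\frac{m_n}{Cv_n}}-\bigo{\log n}-r_n$; to recover the theorem's exact $u_n$ with the $4$'s and the $4\log(n)$ you must verify $C\le 4$ and pin down the logarithmic residual, which is doable but not automatic. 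The paper's combined-supermartingale lemma buys exactly these constants for free (and extends uniformly over $K$ directions at cost $\log K$, which is why it is the workhorse for the multivariate \Cref{thm:capital-power}); your two-bound-plus-AM--GM route buys transparency and avoids the Bercu--Touati machinery. Your bookkeeping of the failure probabilities ($\rho+\varrho_2+\pi^2/3$) and the final appeal to \Cref{lem:power_from_bound} match the paper.
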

The restriction to positive bets do not deteriorate the power properties of the Capital test supermartingales that extends easily to composite null. 
\subsection{Extension to other functionals}\label{sec:supg}
In this section, we observe a sequence $(X_t)_{t\in\nset}$ valued in a set $\Xset$ and consider a set $\cG$ of functions from $\Xset$ to $[-1,1]$. We are interested in the null hypotheses
\begin{equation}\label{eq:null-functional}
\cH_0 : \PEarg[t-1]{g(X_t)} = 0 \text{ for all } t \geq 1 \text{ and } g \in\cG \;,    
\end{equation}
\begin{equation}\label{eq:null-functional-ineq}
\cH_0^{-} : \PEarg[t-1]{g(X_t)} \leq 0 \text{ for all } t \geq 1 \text{ and } g \in\cG \;.
\end{equation}
Following \cite{Shekhar21}, we consider a predictable sequence $(g_t)_{t\geq 1}$ valued in $\cG$ referred to as the \emph{prediction strategy} and denote its stochastic regret by
$$
\cS_n := \sup_{g \in \cG} \sum_{t=1}^n \PEarg[t-1]{g(X_t)} - \sum_{t=1}^n \PEarg[t-1]{g_t(X_t)}\;.
$$
Throughout this section, we assume that there exists a nonnegative sequence $(s_n)_{n\geq 1}$, such that 
$$
\varsigma := \sum_{n \geq 1} \PP[\cS_n > s_n] < +\infty  \;.
$$ 
\subsubsection{Hoeffding test supermartingale}
Given a set $\Lambda \subset \rset$, for $\lambda \in \Lambda$ and a $\Lambda$-valued betting strategy $(\lambda_n)_{n\geq 1}$, we define the Hoeffding test supermartingales as
\begin{equation}\label{eq:hoeffding-functional}
L_n^{\rm H}(\lambda) = \prod_{t=1}^n \exp\lr{\lambda g_t(X_t) - \lambda^2/2} \quad \text{and} \quad  W_n^{\rm H} = \prod_{t=1}^n \exp\lr{\lambda_t g_t(X_t) - \lambda_t^2/2} \,, \quad n \in \nset \;.
\end{equation}
The following proposition holds. 
\begin{proposition}\label{prop:hoeffding-martingales-functional}
Relation \eqref{eq:hoeffding-functional} defines two test supermartingales for $\cH_0$ of \eqref{eq:null-functional} if we take $\Lambda = \rset$ and for $\cH_0^{-}$ of \eqref{eq:null-functional-ineq} if we take $\Lambda = \rset_+$.
\end{proposition}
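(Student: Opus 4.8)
The plan is to mirror the proof of \Cref{prop:hoeffding-martingales} almost verbatim, replacing the bounded vector $X_t$ by the scalar $g_t(X_t)$, which lies in $[-1,1]$ since $\cG$ consists of $[-1,1]$-valued functions. Observe first that $W_n^{\rm H}$ in \eqref{eq:hoeffding-functional} is exactly the predictable plug-in built from the family $\set{(L_n^{\rm H}(\lambda))_{n\in\nset}}{\lambda\in\Lambda}$, so by the discussion of \Cref{sec:plugin} it suffices to show that, for each \emph{fixed} $\lambda\in\Lambda$, the process $(L_n^{\rm H}(\lambda))_{n\in\nset}$ is a test supermartingale. The normalization $L_0^{\rm H}(\lambda)=1$ and nonnegativity are immediate from the product form, so I would only have to establish the one-step inequality $\PEarg[n-1]{L_n^{\rm H}(\lambda)}\leq L_{n-1}^{\rm H}(\lambda)$; since $L_{n-1}^{\rm H}(\lambda)$ is a positive $\cF_{n-1}$-measurable factor, this reduces to
$$
\PEarg[n-1]{\exp\lr{\lambda g_n(X_n) - \lambda^2/2}} \leq 1\,.
$$

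The key estimate is the conditional form of Hoeffding's lemma. Because $(g_t)_{t\geq 1}$ is predictable, $g_n$ is $\cF_{n-1}$-measurable, and $g_n(X_n)\in[-1,1]$ with \emph{deterministic} endpoints, so the centered variable $g_n(X_n)-\PEarg[n-1]{g_n(X_n)}$ ranges in an interval of width $2$. Applying Hoeffding's lemma conditionally on $\cF_{n-1}$ then gives $\PEarg[n-1]{\exp\lr{\lambda\lr{g_n(X_n)-\PEarg[n-1]{g_n(X_n)}}}}\leq \exp\lr{\lambda^2 2^2/8}=\exp\lr{\lambda^2/2}$, which rearranges into
$$
\PEarg[n-1]{\exp\lr{\lambda g_n(X_n)-\lambda^2/2}} \leq \exp\lr{\lambda\,\PEarg[n-1]{g_n(X_n)}}\,.
$$
The constant $1/2$ in \eqref{eq:hoeffding-functional} is precisely tuned to this range, exactly as $D^2/8$ was for $X_t$ in \eqref{eq:hoeffding-LW}.

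It then remains to control the right-hand side under each null. Under $\cH_0$ of \eqref{eq:null-functional} one has $\PEarg[n-1]{g(X_n)}=0$ for every $g\in\cG$; freezing the $\cF_{n-1}$-measurable selection $g_n$ and invoking this identity yields $\PEarg[n-1]{g_n(X_n)}=0$, so the right-hand side equals $1$ for any $\lambda\in\Lambda=\rset$. Under $\cH_0^{-}$ of \eqref{eq:null-functional-ineq} the same freezing gives $\PEarg[n-1]{g_n(X_n)}\leq 0$, and since now $\lambda\in\Lambda=\rset_+$, we get $\exp\lr{\lambda\,\PEarg[n-1]{g_n(X_n)}}\leq 1$. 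In both cases the one-step inequality holds, establishing the claim.

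I expect the only genuine subtlety to be the \emph{freezing} step, namely justifying $\PEarg[n-1]{g_n(X_n)}=0$ (respectively $\leq 0$) for the predictable, hence $\cF_{n-1}$-measurable, random selection $g_n$ rather than for a fixed $g\in\cG$. This rests on the standard principle that an $\cF_{n-1}$-measurable quantity behaves as a constant under $\PEarg[n-1]{\cdot}$, so that the pointwise null $\PEarg[n-1]{g(X_n)}=0$ transfers to $g=g_n$; whatever measurable-selection bookkeeping is needed to make this fully rigorous is routine and parallels the fixed-$\lambda$ reasoning already used in \Cref{prop:hoeffding-martingales,prop:composite-martingales}.
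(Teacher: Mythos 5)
Your proposal is correct and follows essentially the same route as the paper: reduce to a fixed $\lambda$ via the plug-in argument, apply Hoeffding's lemma conditionally to $g_n(X_n)\in[-1,1]$ (width $2$, hence the constant $\lambda^2/2$), and then kill the term $\lambda\,\PEarg[n-1]{g_n(X_n)}$ under each null. The paper resolves your ``freezing'' concern exactly as you anticipate, by bounding $\lambda\,\PEarg[n-1]{g_n(X_n)}$ through $\abs{\lambda}\sup_{g\in\cG}\abs{\PEarg[n-1]{g(X_n)}}=0$ under $\cH_0$ and through $\lambda\sup_{g\in\cG}\PEarg[n-1]{g(X_n)}\leq 0$ under $\cH_0^{-}$.
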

\begin{proof}
The proof is similar to the proof of \Cref{prop:hoeffding-martingales} using the fact that for any $\lambda \in \rset$, $\lambda \PEarg[n-1]{g_n(X_n)} \leq \abs{\lambda}\sup_{g\in\cG}\abs{\PEarg[n-1]{g(X_n)}} = 0$ under $\cH_0$ and for any $\lambda \geq 0$, $\lambda \PEarg[n-1]{g_n(X_n)} \leq \lambda\sup_{g\in\cG}\PEarg[n-1]{g(X_n)} \leq 0$ under $\cH_0^{-}$.  
\end{proof}

The following result extends \Cref{thm:power-hoeff-nodim,thm:power-hoeff-positif} to other functionals.
\begin{theorem}\label{thm:hoeffding-g-bound}
Let $\Lambda = \rset$ or $\rset_+$ and assume that the regret $\cR_n := \max_{\lambda\in\Lambda} \log L_n^{\rm H}(\lambda) - \log W_n^{\rm H}$ of the betting strategy $(\lambda_n)_{n\geq 1}$ satisfies 
$
\rho := \sum_{n \geq 1} \PP[\cR_n > r_n] < +\infty$,
for some nonnegative sequence $(r_n)_{n\geq 1}$. Let $(m_n)_{n \geq 1}$ be a nonnegative sequence and consider the alternative hypothesis 
$$
 \cH_1 : \varrho_1 :=\sum_{n\geq 1} \PParg{\sup_{g\in\cG}\frac{1}{n}\sum_{t=1}^n \PEarg[t-1]{g(X_t)}  < m_n} < +\infty \;,
$$
Then, under $\cH_1$,  $(W_n^{\rm H})_{n\geq 1}$ satisfies \eqref{eq:boundW} for any $\alpha \in (0,1)$ with $\varrho = \rho + \varsigma + \varrho_1 + \frac{\pi^2}{6}$ and 
 $$
 u_n := \frac{1}{2n}\lr{nm_n-s_n-2\sqrt{n\log(n)}}_+^2 - r_n \;.
 $$
 Hence, we have $\liminf_{n\to +\infty} \frac{\log W_n^{\rm H}}{u_n} \geq 1  \;\;\PP\text{-}a.s$ and $\PE[\tau_\alpha^{\rm H}] \leq \rho + \varrho_1 + \varsigma + \frac{\pi^2}{6} + \aleph((u_n)_{n\geq 1},\log\lr{1/\alpha})$. 
\end{theorem}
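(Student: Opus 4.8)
The plan is to derive a deterministic lower bound on $\log W_n^{\rm H}$ holding off a summable family of bad events, exactly in the spirit of \Cref{thm:power-hoeff-nodim,thm:power-hoeff-positif}, and then to conclude through \Cref{lem:power_from_bound}. Write $G_n := \sum_{t=1}^n g_t(X_t)$ and $\bar G_n := \sum_{t=1}^n \PEarg[t-1]{g_t(X_t)}$. First I would exploit the regret hypothesis: on the event $\{\cR_n \leq r_n\}$ one has $\log W_n^{\rm H} \geq \max_{\lambda\in\Lambda}\log L_n^{\rm H}(\lambda) - r_n$. Since $\log L_n^{\rm H}(\lambda) = \lambda G_n - n\lambda^2/2$ is a concave quadratic in the scalar $\lambda$, its maximum over $\Lambda = \rset$ equals $G_n^2/(2n)$ and over $\Lambda = \rset_+$ equals $(G_n)_+^2/(2n)$; in both cases it dominates $(G_n)_+^2/(2n)$, so $\log W_n^{\rm H} \geq (G_n)_+^2/(2n) - r_n$ on $\{\cR_n \leq r_n\}$.

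Next I would control $G_n$ from below. The difference $G_n - \bar G_n = \sum_{t=1}^n \bigl(g_t(X_t) - \PEarg[t-1]{g_t(X_t)}\bigr)$ is a martingale whose increments lie in $[-2,2]$, because $g_t$ is predictable and $g_t(X_t) \in [-1,1]$. Applying Hoeffding's lemma conditionally (the conditional range is $2$, hence the conditional moment generating function is bounded by $\exp(\theta^2/2)$) and iterating the tower property, a one-sided Chernoff bound yields $\PP[G_n - \bar G_n \leq -x] \leq \exp(-x^2/(2n))$. Taking $x = 2\sqrt{n\log(n)}$ gives $\PP[G_n - \bar G_n < -2\sqrt{n\log(n)}] \leq n^{-2}$, and $\sum_{n\geq 1} n^{-2} = \pi^2/6$, which is the origin of that term in $\varrho$. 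Independently, the stochastic-regret hypothesis gives, on $\{\cS_n \leq s_n\}$ intersected with the complement of the $\cH_1$ event, $\bar G_n = \sup_{g\in\cG}\sum_{t=1}^n\PEarg[t-1]{g(X_t)} - \cS_n \geq nm_n - s_n$.

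Combining these on the intersection of the four good events --- $\{\cR_n \leq r_n\}$, $\{\cS_n \leq s_n\}$, $\{G_n - \bar G_n \geq -2\sqrt{n\log(n)}\}$ and $\{\sup_{g\in\cG}\frac1n\sum_{t=1}^n\PEarg[t-1]{g(X_t)} \geq m_n\}$ --- I would obtain $G_n \geq nm_n - s_n - 2\sqrt{n\log(n)}$, whence, since $s\mapsto (s)_+^2$ is nondecreasing, $(G_n)_+^2 \geq (nm_n - s_n - 2\sqrt{n\log(n)})_+^2$ and therefore $\log W_n^{\rm H} \geq u_n$. Consequently $\{\log W_n^{\rm H} < u_n\}$ is contained in the union of the four complementary bad events; a union bound followed by summation over $n$ delivers $\sum_{n\geq 1}\PP[\log W_n^{\rm H} < u_n] \leq \rho + \varsigma + \varrho_1 + \frac{\pi^2}{6} = \varrho$, which is precisely \eqref{eq:boundW}. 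The two displayed conclusions then follow verbatim from \Cref{lem:power_from_bound}.

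The argument is structurally identical to the earlier Hoeffding results, so I expect no conceptual obstacle; the single new ingredient is the insertion of the prediction-strategy regret $\cS_n$ between the empirical sum $G_n$ and the best-in-hindsight functional $\sup_{g\in\cG}\frac1n\sum_t\PEarg[t-1]{g(X_t)}$, which is what lets one scalar bet $\lambda$ handle the whole family $\cG$. The part needing the most care is the constant bookkeeping: calibrating the deviation to $2\sqrt{n\log(n)}$ so the concentration contributes exactly $n^{-2}$ (hence $\pi^2/6$), and assembling the three error sources $r_n$, $s_n$ and the martingale deviation into the stated $u_n$ without sign or positive-part errors.
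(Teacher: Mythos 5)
Your proposal is correct and follows essentially the same route as the paper's proof: the same four good events (regret, stochastic regret, Azuma--Hoeffding deviation of $\sum_t g_t(X_t)$ around its compensator, and the $\cH_1$ event), the same lower bound $\max_{\lambda\in\Lambda}\log L_n^{\rm H}(\lambda)\geq \bigl(\sum_t g_t(X_t)\bigr)_+^2/(2n)$, and the same union bound yielding $\rho+\varsigma+\varrho_1+\pi^2/6$. The only cosmetic difference is that you rederive the one-sided concentration via conditional Hoeffding plus Chernoff where the paper cites the Azuma--Hoeffding inequality directly.
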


Note that, unlike the case where $\Lambda = \rset$, we were not able to show that the betting strategy achieves a logarithmic regret when $\Lambda = \rset_+$. 

\subsubsection{Capital test supermartingale}
Given a set $\Gamma \subset \rset$, for $\gamma \in \Gamma$ and a $\Gamma$-valued betting strategy $(\gamma_n)_{n\geq 1}$, we define the Capital test supermartingales as
\begin{equation}\label{eq:capital-functional}
L_n^{\rm C}(\gamma) = \prod_{t=1}^n (1 + \gamma g_t(X_t)) \quad \text{and} \quad  W_n^{\rm C} = \prod_{t=1}^n (1 + \gamma_t g_t(X_t)) \,, \quad n \in \nset    \;.
\end{equation}
The following proposition, whose proof of similar to the one of \Cref{prop:hoeffding-martingales-functional}, holds.
\begin{proposition}\label{prop:capital-martingales-functional}
Relation \eqref{eq:capital-functional} defines two test supermartingales for $\cH_0$ of \eqref{eq:null-functional} if we take $\Gamma = [-1/2,1/2]$ and for $\cH_0^{-}$ of \eqref{eq:null-functional-ineq} if we take $\Gamma = [0,1/2]$.
\end{proposition}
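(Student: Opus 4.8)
The plan is to follow exactly the template of \Cref{prop:hoeffding-martingales-functional} and \Cref{prop:capital-martingales}. First I would invoke the predictable plug-in construction of \Cref{sec:plugin}: since $\frac{L_n^{\rm C}(\gamma)}{L_{n-1}^{\rm C}(\gamma)} = 1 + \gamma g_n(X_n)$, the process $(W_n^{\rm C})_{n\in\nset}$ in \eqref{eq:capital-functional} is precisely the predictable plug-in built from the family $\set{(L_n^{\rm C}(\gamma))_{n\in\nset}}{\gamma\in\Gamma}$ with the betting strategy $(\gamma_n)_{n\geq 1}$. Hence it suffices to prove that, for each fixed $\gamma\in\Gamma$, the process $(L_n^{\rm C}(\gamma))_{n\in\nset}$ is itself a test supermartingale; the claim for $(W_n^{\rm C})_{n\in\nset}$ then follows automatically.

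Next I would check the three requirements of \Cref{def:test-supermartingale} for $(L_n^{\rm C}(\gamma))_{n\in\nset}$. The normalization $L_0^{\rm C}(\gamma)=1$ is immediate (empty product). For nonnegativity, I would use that every prediction $g_t$ takes values in $[-1,1]$, so that $\abs{\gamma g_t(X_t)}\leq 1/2$ whenever $\gamma\in[-1/2,1/2]$ (and a fortiori whenever $\gamma\in[0,1/2]$); consequently each factor satisfies $1+\gamma g_t(X_t)\geq 1/2>0$ and the product is nonnegative. For the conditional expectation, since $L_{n-1}^{\rm C}(\gamma)$ and the predictable choice $g_n$ are $\cF_{n-1}$-measurable, I would pull the factor out and reduce to controlling $\PEarg[n-1]{1+\gamma g_n(X_n)} = 1 + \gamma\PEarg[n-1]{g_n(X_n)}$.

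The two cases are then settled exactly as in \Cref{prop:hoeffding-martingales-functional}. Under $\cH_0$ of \eqref{eq:null-functional} with $\Gamma=[-1/2,1/2]$, the key point, which is the only real subtlety, is that $g_n$ is random (merely $\cF_{n-1}$-measurable) while the null is stated pointwise in $g$; I would bridge this by the uniform bound $\abs{\PEarg[n-1]{g_n(X_n)}}\leq \sup_{g\in\cG}\abs{\PEarg[n-1]{g(X_n)}}=0$, giving $\PEarg[n-1]{L_n^{\rm C}(\gamma)}=L_{n-1}^{\rm C}(\gamma)$ (so in fact a martingale). Under $\cH_0^{-}$ of \eqref{eq:null-functional-ineq} with $\Gamma=[0,1/2]$, the same bound yields $\PEarg[n-1]{g_n(X_n)}\leq \sup_{g\in\cG}\PEarg[n-1]{g(X_n)}\leq 0$, and since $\gamma\geq 0$ this gives $\gamma\PEarg[n-1]{g_n(X_n)}\leq 0$, hence the supermartingale inequality $\PEarg[n-1]{L_n^{\rm C}(\gamma)}\leq L_{n-1}^{\rm C}(\gamma)$. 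The whole argument is routine; the only point requiring care is this passage from the pointwise null to the predictable prediction $g_n$ via the supremum over $\cG$, precisely as done for the Hoeffding supermartingale.
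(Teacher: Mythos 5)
Your proof is correct and follows essentially the same route as the paper, which simply notes that the argument is analogous to that of \Cref{prop:hoeffding-martingales-functional}: reduce to the fixed-$\gamma$ process via the predictable plug-in construction, and pass from the pointwise null to the predictable $g_n$ through the supremum over $\cG$. Your explicit verification of nonnegativity via $\abs{\gamma g_t(X_t)}\leq 1/2$ and of the conditional-expectation step matches what the paper leaves implicit.
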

The following result extends \Cref{thm:capital-power,thm:power-capital-positif} to other functionals. 
\begin{theorem}\label{thm:capital-g-bound}
Let $\Gamma = [-1/2,1/2]$ or $[0,1/2]$ and assume that the regret $\cR_n := \max_{\gamma \in \Gamma} \log L_n^{\rm C}(\gamma) - \log W_n^{\rm C}$ of the betting strategy $(\gamma_n)_{n\geq 1}$ satisfies
$
\rho := \sum_{n \geq 1} \PP[\cR_n > r_n] < +\infty
$,
for some nonnegative sequence $(r_n)_{n\geq 1}$. Let $(m_n)_{n\geq 1}$ and $(v_n)_{n\geq 1}$ be two nonnegative sequences and consider the alternative hypothesis 
$$
\cH_2 : \varrho_2 := \sum_{n \geq 1} \PParg{\sup_{g\in\cG}\frac{1}{n}\sum_{t=1}^n \PEarg[t-1]{g(X_t)} < m_n \text{ or } \sup_{g\in\cG}\frac{1}{n}\sum_{t=1}^n \PEarg[t-1]{g(X_t)^2} > v_n} < + \infty \;.
$$
Then, under $\cH_2$, $(W_n^{\rm C})_{n\geq 1}$ satisfies \eqref{eq:boundW} for any $\alpha \in (0,1)$ with $\varrho = \rho + \varrho_2 + \varsigma  + \frac{\pi^2}{3}$ and
$$
u_n := \frac{(nm_n-s_n)_+}{4}\lr{1 \wedge \frac{(nm_n-s_n)_+}{4n v_n} } - 4\log(n) - r_n \;.
$$
Hence, we have $\liminf_{n\to +\infty} \frac{\log W_n^{\rm C}}{u_n} \geq 1  \;\;\PP\text{-}a.s$ and $\PE[\tau_\alpha^{\rm C}] \leq \rho + \varrho_2 + \varsigma + \frac{\pi^2}{3} + \aleph((u_n)_{n\geq 1},\log\lr{1/\alpha})$. 
\end{theorem}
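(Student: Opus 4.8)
The plan is to follow the pattern of the proofs of \Cref{thm:capital-power} and \Cref{thm:power-capital-positif}, enriched with the stochastic-regret bookkeeping already used for \Cref{thm:capital-2step-bound}: indeed, the announced $u_n$ is exactly the one of \Cref{thm:capital-2step-bound} (equivalently, that of \Cref{thm:power-capital-positif} with $B=1$), since every $g\in\cG$ takes values in $[-1,1]$. Concretely, I would produce a deterministic lower bound $\log W_n^{\rm C}\ge u_n$ valid off an event of probability at most $\PP[\cR_n>r_n]+\PP[\cS_n>s_n]+\PParg{\text{the bad event of }\cH_2}+2/n^2$, sum these over $n$ to obtain \eqref{eq:boundW} with $\varrho=\rho+\varrho_2+\varsigma+\frac{\pi^2}{3}$, and conclude through \Cref{lem:power_from_bound}.

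First I would use the regret assumption to dispose of the betting fraction: on $\{\cR_n\le r_n\}$ one has $\log W_n^{\rm C}\ge \log L_n^{\rm C}(\epsilon)-r_n$ for every fixed $\epsilon\in\Gamma$, and since $\epsilon g_t(X_t)\in[-1/2,1/2]$ the elementary inequality $\log(1+x)\ge x-x^2$ (valid on $[-1/2,1/2]$) linearizes this into $\log W_n^{\rm C}\ge \epsilon\sum_{t=1}^n g_t(X_t)-\epsilon^2\sum_{t=1}^n g_t(X_t)^2-r_n$. It then remains to lower bound the linear sum and upper bound the quadratic sum in terms of the conditional quantities entering \eqref{eq:H2}.

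For the linear term I would split $\sum_{t=1}^n g_t(X_t)=\sum_{t=1}^n\PEarg[t-1]{g_t(X_t)}+M_n$, where $M_n$ is the martingale of the centred increments $\xi_t:=g_t(X_t)-\PEarg[t-1]{g_t(X_t)}$, with $\abs{\xi_t}\le 2$. A Ville-type bound for the exponential supermartingale $\exp\lr{-\epsilon M_n-c\,\epsilon^2\sum_{t\le n}\PEarg[t-1]{\xi_t^2}}$ gives, with probability at least $1-1/n^2$, the inequality $\epsilon M_n\ge -c\,\epsilon^2\sum_{t=1}^n\PEarg[t-1]{g_t(X_t)^2}-2\log n$, so the deviation folds into a second-order term instead of producing a spurious $\sqrt{n\log n}$ loss. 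On $\{\cS_n\le s_n\}$ together with the first part of $\cH_2$ one gets $\sum_{t=1}^n\PEarg[t-1]{g_t(X_t)}\ge \sup_{g\in\cG}\sum_{t=1}^n\PEarg[t-1]{g(X_t)}-\cS_n\ge nm_n-s_n$. For the quadratic sum I would concentrate $\sum_{t=1}^n g_t(X_t)^2$ (off a second $1/n^2$ event) around $\sum_{t=1}^n\PEarg[t-1]{g_t(X_t)^2}$, which the variance part of $\cH_2$ is meant to bound by $nv_n$. The two $1/n^2$ events are the source of the $\frac{\pi^2}{3}=2\cdot\frac{\pi^2}{6}$ contribution and of the $-4\log n$ in $u_n$.

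Collecting everything, on the good event $\log W_n^{\rm C}\ge \epsilon(nm_n-s_n)-C\epsilon^2 nv_n-4\log n-r_n$ for an absolute constant $C$, and optimizing $\epsilon$ over $\Gamma$ — the interior optimum $\epsilon\propto (nm_n-s_n)/(nv_n)$ versus the cap $\epsilon=1/2$ — yields precisely the factor $1\wedge\frac{(nm_n-s_n)_+}{4nv_n}$, hence $u_n$. The main obstacle is the penultimate step: reconciling the conditional second moment of the running bet, $\sum_t\PEarg[t-1]{g_t(X_t)^2}$, with the variance bound of $\cH_2$, which only constrains $\sup_{g\in\cG}\frac1n\sum_t\PEarg[t-1]{g(X_t)^2}$. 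This is the functional analogue of the Cauchy--Schwarz step $\PEarg[t-1]{(\eta_t^\top X_t)^2}\le\norm{\eta_t}_2^2\,\PEarg[t-1]{\norm{X_t}_2^2}$ that made \Cref{thm:capital-2step-bound} work, and arranging the concentration so that all deviations fall within the single $nv_n$ second-order budget — while preserving the clean constants in the $1\wedge$ factor — is where the computation is most delicate.
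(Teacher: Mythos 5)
Your proposal follows essentially the same route as the paper: the paper's proof linearizes via $\log(1+x)\ge x-x^2$, and its Lemma~\ref{lem:argument_capital} (applied with $K=1$ to $\gamma Z_t$, $Z_t=g_t(X_t)$) is exactly the self-normalized ``Ville-type'' bound you describe, delivering in a single inequality $\sum_t(\gamma Z_t-(\gamma Z_t)^2)\ge\sum_t\PEarg[t-1]{\gamma Z_t}-4\sum_t\PEarg[t-1]{(\gamma Z_t)^2}-4\log n$ off a $2/n^2$ event, which it then combines with $\{\cR_n\le r_n\}$, $\{\cS_n\le s_n\}$ and the $\cH_2$ events before optimizing the deterministic bet $\gamma_n^*=\frac12\wedge\frac{(nm_n-s_n)_+}{8nv_n}$. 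The ``main obstacle'' you flag is dispatched in the paper simply by bounding $\sum_{t}\PEarg[t-1]{g_t(X_t)^2}\le\sup_{g\in\cG}\sum_{t}\PEarg[t-1]{g(X_t)^2}\le nv_n$ at that step, so no separate concentration of the empirical quadratic term is needed and the constants $\tfrac14$ and $\tfrac14$ in $u_n$ come out exactly.
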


We observe the same behavior as discussed in \Cref{sec:discussion-dimension}, namely that the Hoeffding test supermartingale is restricted to alternatives where $n m_n$ is at least $\bigo{\sqrt{n\log(n)}}$ even if $s_n$ is of a lower order of magnitude.  For the Capital test supermartingale, we can hope for larger alternatives but, unlike \Cref{thm:capital-power}, we are restricted to the ones for which $n m_n$ increases at least as fast as $s_n$. Hence the performance of the prediction strategy directly impacts the size of the alternative. 

\section{Applications}\label{sec:application}
\subsection{Testing for elicitable and identifiable forecasters}\label{sec:test-forecast}
In this section, we specify a null hypothesis for  the evaluation of a forecaster and propose test supermartingales. We observe an $(\cF_t)_{t\in\nset}$-adapted process $(Y_t)_{t\in\nset}$ valued in a measurable space $(\Yset,\Ysigma)$ and consider the problem of predicting a statistical quantity $\theta_t\in\Theta$ of the distribution of $Y_t$ given $\cF_{t-1}$ where $\Theta\subset\rset^d$ for some $d\geq 1$. We assume that at each time step $t$, an expert provides a predictable forecast $\hat\theta_t$ of $\theta_t$.
We consider two cases: the \emph{identifiable} one and the \emph{elicitable} one. These cases are studied in \cite{cassgrain22-anytime} under the assumption that $\theta_t$ is constant over time. In the identifiable case, we assume that $\theta_t$ satisfies the identifiability condition
\begin{equation}\label{eq:identifiable}
\PEarg[t-1]{m(\theta_t, Y_t)} = 0\; \text{ for all } t \geq 1\,,
\end{equation}
for some known function $m: \Theta\times\Yset \to \Xset\subset \rset^d$. Hence, if $\Xset$ is bounded, this reduces to bounded mean testing studied in \Cref{sec:mean-test}  with $X_t = m(\hat\theta_t,Y_t)$. 

In the elicitable case, we assume that $\theta_t$ satisfies the elicitability condition
\begin{equation}\label{eq:elicitable}
\theta_t \in \argmin_{\theta\in\Theta} \PEarg[t-1]{\ell(\theta,Y_t)}\; \text{ for all } t \geq 1\,,
\end{equation}
for some known loss function $\ell : \Theta\times\Yset \to \rset$.  Observing that this condition is equivalent to 
$$
\PEarg[t-1]{\ell(\theta_t,Y_t) - \ell(\theta,Y_t)}  \leq 0 \; \text{ for all } \theta \in \Theta \;,
$$
we get that, if $\Yset,\Theta$ and $\ell$ are bounded, then the elicitable case lies in the setting studied in \Cref{sec:supg} for the composite null taking $X_t = (\theta_t,X_t) \in \Xset = \Theta \times \Yset$ and $\cG = \set{(\theta,y) \mapsto \tilde\ell(\theta,y) -  \tilde\ell(\xi,y)}{\xi \in \Theta}$ where $\tilde\ell$ is a scaled version of $\ell$ so that functions in $\cG$ are valued in $[-1,1]$.

In \cite{cassgrain22-anytime}, the authors consider tests for elicitable and identifiable functionals via the null hypothesis defined by their Equation~(8). This context is similar to ours if we assume that $\theta_t = \theta_0$ is constant over time.  In the case of bounded functionals, the test supermartingales proposed in their Lemmas~3.1 and 3.2 reduce to the Capital test supermartingale of \Cref{sec:supg} up to some rescaling of the functions $m$ and $\ell$. Transposing their results to the setting of \Cref{sec:supg}, Theorem~4.2 and Proposition~4.3 of \cite{cassgrain22-anytime} guarantee that $\PP[\tau_\alpha<+\infty]=1$ if there exists $g \in \cG$ and $\lambda \in \Lambda$ such that 
$$
\liminf_{n\to+\infty} \frac{1}{n}\sum_{t=1}^n\log(1+ \lambda g(X_t)) > 0 \quad \PP\text{-a.s.} \;,
$$
which is possible only if $\sup_{g\in\cG}\frac{1}{n}\sum_{t=1}^n g(X_t)$ does not converge to $0$ as $n \to +\infty$. To this extent, our \Cref{thm:hoeffding-g-bound,thm:capital-g-bound} are stronger since they include larger alternatives. In addition, \cite{cassgrain22-anytime} only show asymptotic power while we also provide bounds on the expected rejection time.

\subsection{Comparison of forecasters}\label{sec:comparison}
In this section, we extend the work of \cite{Henzi21-comparing} to non binary forecasters and provide power guarantees under the alternative on the difference of stochastic regrets. Considering two predictable sequences $(\theta_t)_{t\in\nset}$ and $(\xi_t)_{t\in\nset}$ valued in $\Theta \subset \rset^d$ and an adapted sequence $(Y_t)_{t\in \nset}$ values in $\Yset$, we want to test the null hypothesis 
$$
\cH_0 : \forall t\geq 1, \,\PEarg[t-1]{\ell(\theta_t,Y_t) - \ell(\xi_t,Y_t)} \leq 0 \quad \PP\text{-a.s.} \;,
$$
for some known loss function $\ell : \Theta\times\Yset \to \cL\subset\rset$. When $\Theta = [0,1]$ and $\Xset = \lrc{0,1}$ this corresponds to the setting of \cite{Henzi21-comparing} with the null hypothesis defined in their Equation~(4) if we take $c_t = 1$ and $h = 1$. When $\cL$ is bounded, this reduces to the composite null of \Cref{sec:ineq_hyp} with $X_t = \ell(\theta_t,Y_t) - \ell(\xi_t,Y_t)$. In this case, it should be noted that the alternatives of \Cref{thm:power-hoeff-positif,thm:power-capital-positif} imply that the stochastic regret of $(\theta_t)_{t\in\nset}$ exceeds the one of $(\xi_t)_{t\in\nset}$ by at least $nm_n$ and we have seen that \Cref{thm:power-hoeff-positif,thm:power-capital-positif} respectively allow $nm_n$ to be of the order $\bigo{\sqrt{n\log(n)}}$ and $\bigo{\log(n)}$. Hence we can discriminate two forecasters even if both achieve logarithmic stochastic regret using Capital test supermartingale.

\section{Numerical simulations}\label{sec:simulation}
\subsection{Bounded mean testing}
In this section, we compare the power of the different test procedures introduced throughout the paper on simulated examples. To do so, we generate $T$ samples of a $d$-dimensional process $X:=(X_t)_{t =1,\cdots,T}$ with non-zero mean and compute the $T$ first steps of the test supermartingale $(W_t)_{t=1,\cdots, T}$ and the truncated rejection time $\tau_\alpha \wedge T$ at level $\alpha$. Replicating this procedure multiple times provides a Monte-Carlo estimate of $\PE[\tau_\alpha \wedge T]$ that can be used to compare the testing procedures. Throughout this section, we take $\alpha=0.05$ and $T=1000$ and the expected truncated rejection times are estimated using $500$ Monte-Carlo replicates.

\subsubsection{Experiment 1: One axis mean}
In the first experiment, we consider the $d$-dimensional process
$X_t =  (m t^{-a},0,\cdots,0)^\top + t^{-b} \epsilon_t$, 
for different values of $m \in (0,1/2)$, $a\in [0,1)$, $b\in [0,1)$ and $d\geq 2$ and where $(\epsilon_t)_{t\geq 1}$ is i.i.d drawn uniformly over the $\ell^2$-ball of $\rset^d$ with radius $1/5$.
Then \Cref{ass:bounded-y} holds with $B = 0.7$ and $D = 0.9$ and we have
$\norm{\mu_n}_\infty = \norm{\mu_n}_2 = m_n := \frac{m}{n}\sum_{t=1}^n t^{-a}$ and $\nu_{n,\infty} \leq \nu_{n,2} 
\leq v_n := \frac{1}{n}\sum_{t=1}^n \lr{m^2 t^{-2a} + \frac{t^{-2b}}{25}}$. 
In the stationary case where $a=b=0$, our theoretical bounds give
\begin{align*}
\PEarg{\tau_\alpha^{\rm H, FTL}} &\leq \bigo{\linlog\lr{\frac{1}{m^2}} + \frac{\log(1/\alpha)}{m^2}} \\  
\PE[\tau_{\alpha}^{\rm C,EWA}] &\leq \bigo{\linlog\lr{\frac{1}{(\epsilon m-4\epsilon^2 v)_+}} + \frac{\log(d/\alpha)}{(\epsilon m-4\epsilon^2 v)_+}} \\
\PE[\tau_{\alpha}^{\rm C, ONS}] &\leq \bigo{\linlog\lr{\frac{d}{m}}+\frac{d+\log(d/\alpha)}{m}} \\
\PE[\tau_{\alpha}^{\rm C, 2steps}] &\leq \bigo{\linlog\lr{\frac{1}{m}}+\frac{\log(1/\alpha)}{m}} 
\end{align*}
For CapitalEWA we take $\epsilon = 1/(2B)$ (the maximal possible value) even if the bound is infinite. In practice, we observe a finite bound.
For all procedures, the dependence with $m$ and $d$ seem consistent with the experimental rejection times shown in \Cref{fig:constant_mean_ball}. In particular, we observe that the Hoeffding and Capital2steps procedures are indeed independent of $d$.
In the non-stationary case where $a,b >0$, we have finite theoretical bounds for the Hoeffding and Capital2steps procedures  if $m_{n} \geq \bigo{\sqrt{\log(n)/n}}$ i.e. $a < 1/2$. For the CapitalEWA procedure, we need $v_{n} \leq \bigo{m_{n}}$ i.e. $b \geq a/2$ and for the CapitalONS, we need  $ v_{n} \leq \bigo{\frac{nm_{n}^2}{\log(n)}}$ i.e. $b\geq (a-1/2)_+$ or $a\wedge b \geq 1/2$. Furthermore, given the expression of $v_n$, we can expect lower dependence on $b$ when $b \geq a$ or $a\wedge b \geq 1/2$. \Cref{fig:decreasing_mean_ball} gathers the experimental rejection times as functions of $a$ and $b$. We observe, indeed, that the Hoeffding procedure can reject only when $a < 1/2$ and that for $b \geq a$ or $a\wedge b \geq 1/2$ all procedures have a limited dependence on $b$. However, we also see the limitation of our theoretical bounds as the other procedures have finite rejection times even in case which are not supported by our theoretical bounds. Finally, it is interesting to note that the CapitalONS procedure exhibits a stronger dependence on $b$ than the others.

\begin{figure}[h]
    \centering
    \begin{subfigure}{\textwidth}
    \centering
    \includegraphics[width=\textwidth]{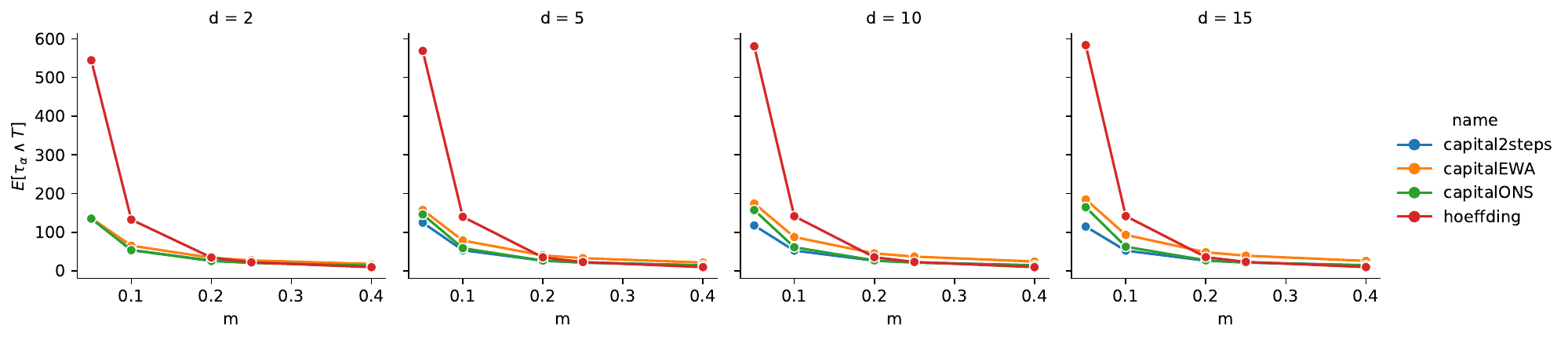}
    \caption{Evolution with $m$ for different values of $d$.}
    \label{fig:constant_mean_ball_m}
    \end{subfigure}
    \centering
    \begin{subfigure}{.75\textwidth}
    \centering
    \includegraphics[width=\linewidth]{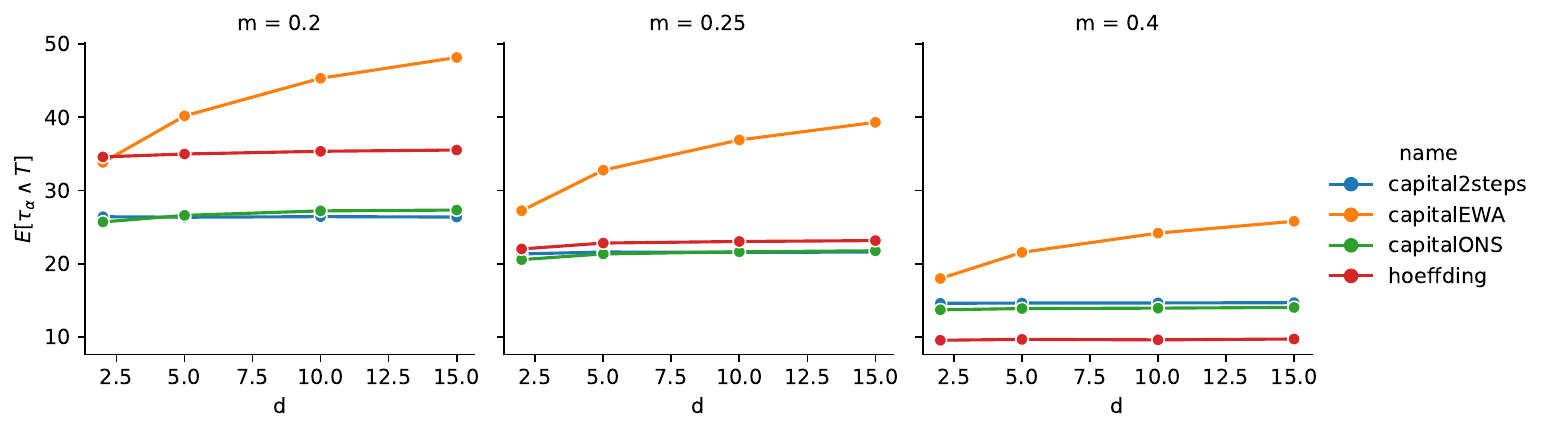}
    \caption{Evolution with $d$ for different values of $m$.}
    \label{fig:constant_mean_ball_d}
    \end{subfigure}
\caption{Truncated rejection times for Experiment 1 with $a=b=0$ (constant mean and variance).}
\label{fig:constant_mean_ball}
\end{figure}

\begin{figure}[H]
    \centering
    \begin{subfigure}{0.45\textwidth}
    \centering
    \includegraphics[width=\textwidth]{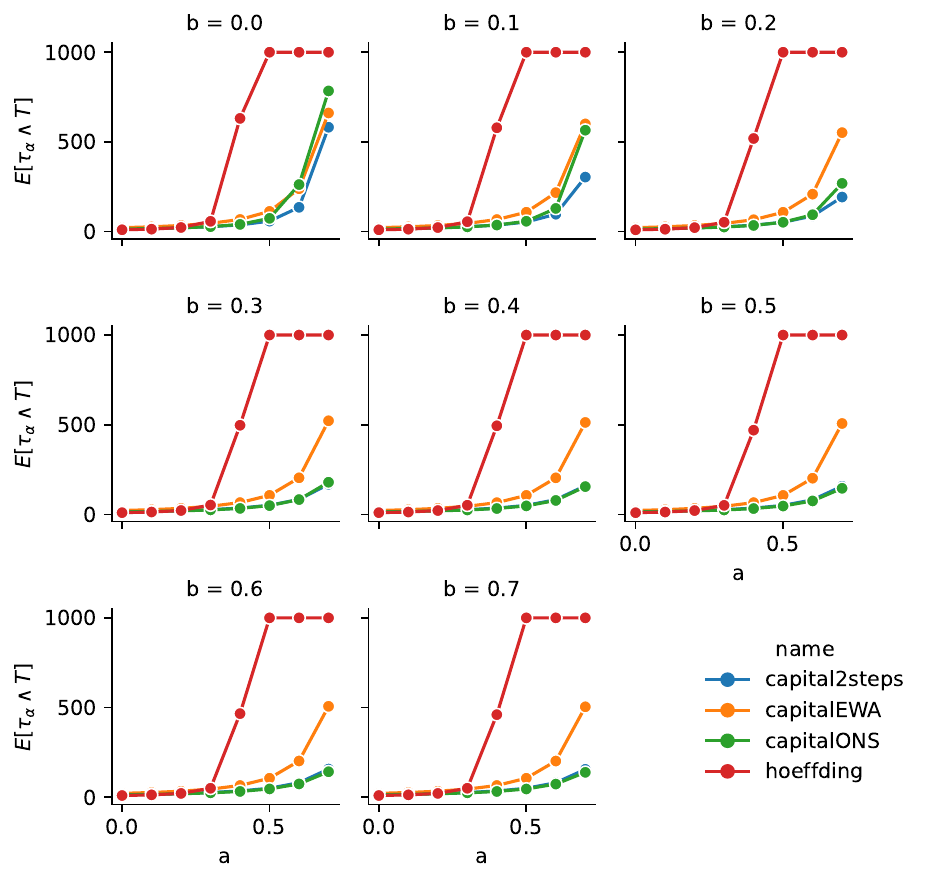}
    \caption{Evolution with $a$ for different values of $b$.}
    \label{fig:decreasing_mean_ball_a}
    \end{subfigure}
    \begin{subfigure}{0.45\textwidth}
    \centering
    \includegraphics[width=\linewidth]{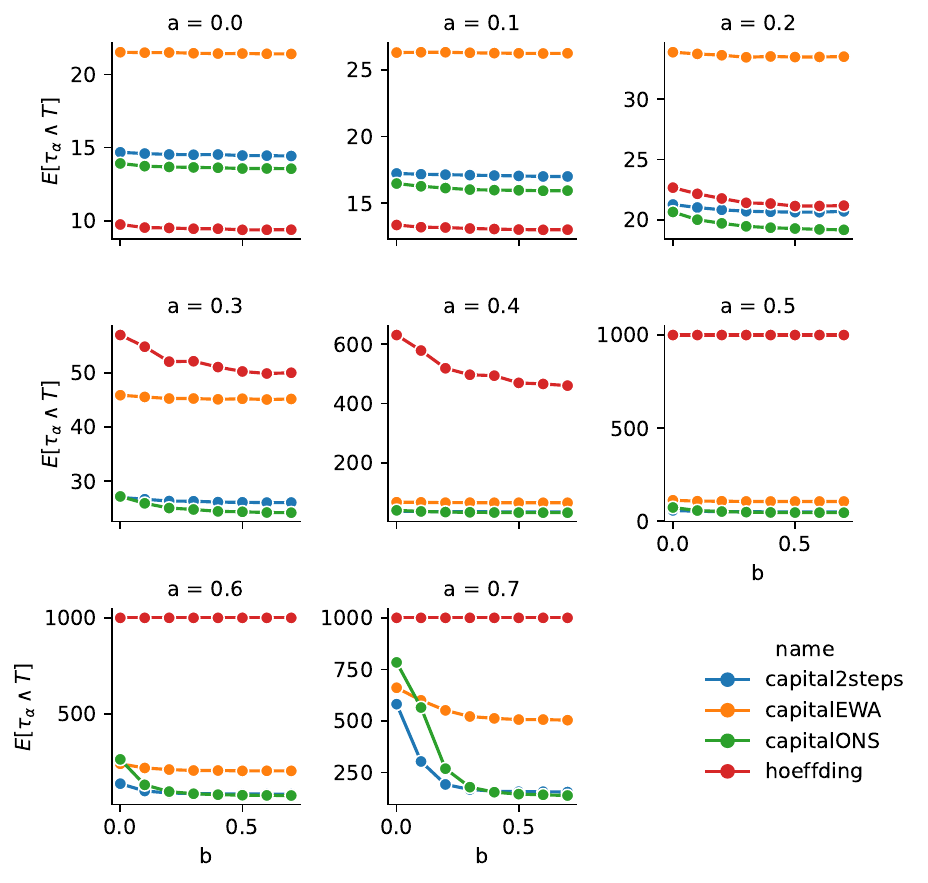}
    \caption{Evolution with $b$ for different values of $a$.}
    \label{fig:decreasing_mean_ball_b}
    \end{subfigure}
\caption{Truncated rejection times for Experiment 1 with $a\geq 0$, $b\geq 0$ (decreasing mean and variance) and $m=0.4$, $d=5$.}
\label{fig:decreasing_mean_ball}
\end{figure}

\subsubsection{Experiment 2: Spiral mean}
In the second experiment, we consider the process
$
X_t =  mt^{-a}(\cos(2\pi t/M), \sin(2\pi t/M))^\top  + t^{-2a} \epsilon_t 
$,
for $m=0.4$, with $a\in [0,1)$ and $M \in \nset^*$ and where $(\epsilon_t)_{t\geq 1}$ is i.i.d drawn uniformly over the $\ell^2$-ball of $\rset^2$ with radius $1/10$, see \Cref{fig:examples_X_simu2} for examples. 
In this case, \Cref{ass:bounded-y} holds with $B = 0.6$ and $D = 1.2$. 
As illustrated in \Cref{fig:spirale}, the lower $M$ and the higher $a$ are, the harder it is to reject the null because $\mu_n$ vanishes faster (see \Cref{fig:examples_X_simu2}).  We also observe that the CapitalONS procedure is more robust to complex cases with lower $M$ or higher $a$. In this setting, the CapitalONS procedure is a better strategy. However, this procedure necessitates to perform a projection at each step which is very time consuming compared to the other procedures. On average in this experiment, one iteration of the CapitalONS procedure takes 330ms compared to less that 0.1ms for the others on a MacBook Pro M1 with 8Go of RAM. Finally, it is interesting to note that, the rejection time does not seem to grow smoothly with $a$. On the contrary their always seems to be a breaking point below which the betting procedures reject the null very fast and above which the betting procedures fail to reject the null. 
To conclude our analysis, we propose to visualize the bets obtained by the different strategies in \Cref{fig:examples_X_simu2}. Interestingly, we observe very different behaviors. We observe that, for low values of $a$ and large values of $M$, the bets have more diverse directions for all strategies and the bets stay larger longer, hence giving more chance to reject the null. This behavior is more present for the capitalONS bets, even for larger values of $a$, thus explaining its better performances. 
\begin{figure}[p]
\centering
\begin{subfigure}{0.75\textwidth}
    \centering
    \includegraphics[width=\linewidth]{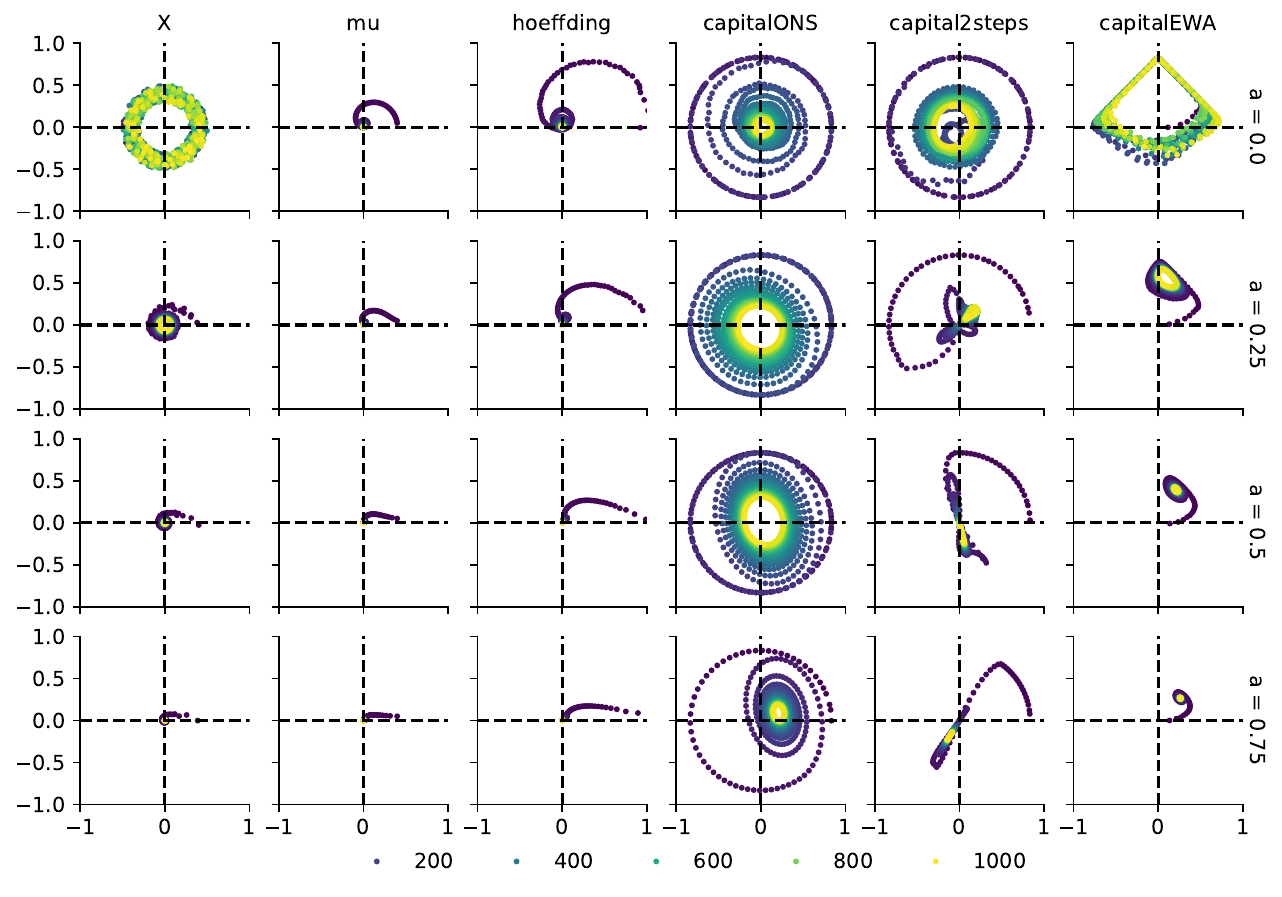}
    \caption{$M=50$}
\end{subfigure}
\begin{subfigure}{0.75\textwidth}
    \centering
    \includegraphics[width=\linewidth]{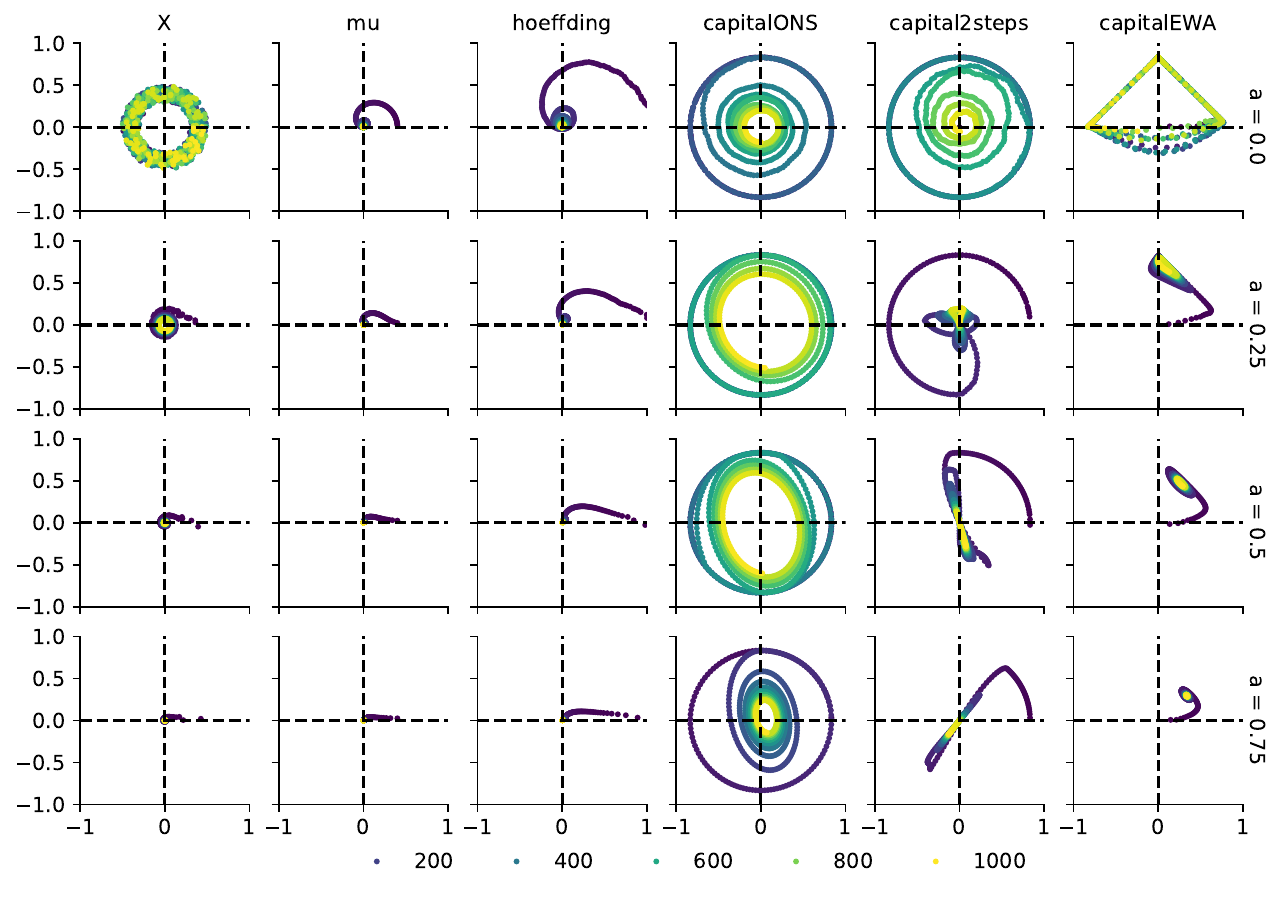}
    \caption{$M=100$}
\end{subfigure}
\caption{Examples of $X_t$ (first column),  $\mu_t$ (second column) and the bets obtained by the different strategies (other columns) in Experiment 2 for different values of $a$ (rows) and M.}
    \label{fig:examples_X_simu2}
\end{figure}
\begin{figure}[h]
    \centering
    \includegraphics[width=\linewidth]{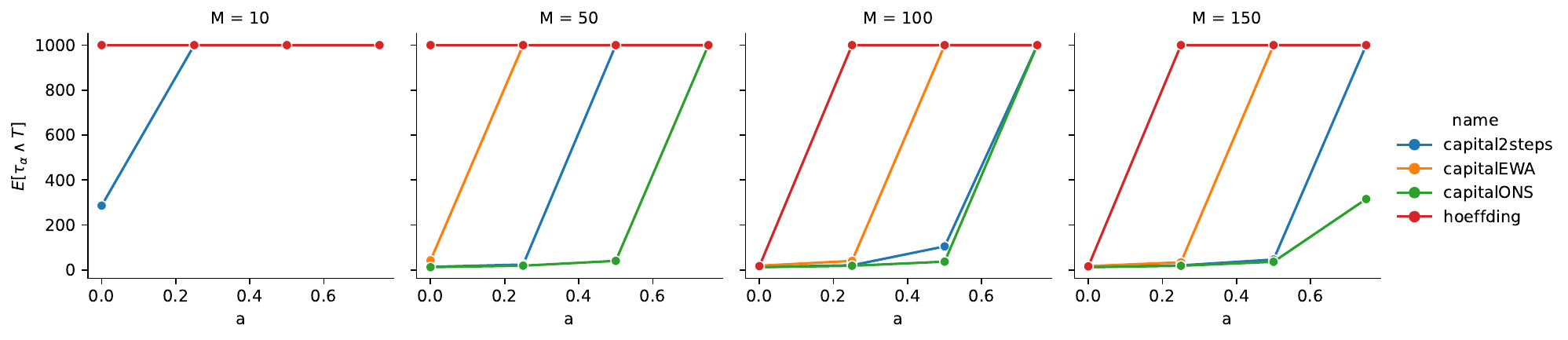}
    \caption{Evolution with $a$ of the truncated rejection time for Experiment 2 for different values of $M$.}
    \label{fig:spirale}
\end{figure}

\begin{figure}[h]
    \centering
\includegraphics[width=0.8\linewidth]{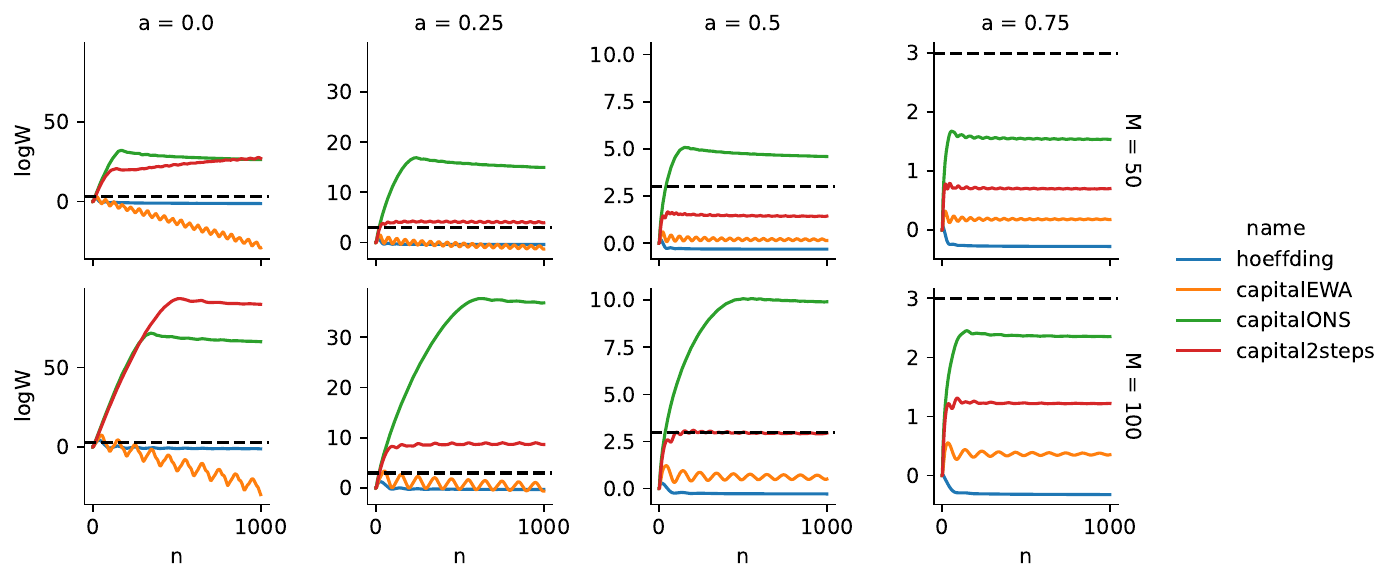}
    \caption{Examples of $\log(W_n)$ in Experiment 2 for different values of $a$ (columns) and $M$ (rows). Dashed horizontal line represents the rejection threshold $\log(1/\alpha)$. }
    \label{fig:spirale_logW}
\end{figure}

\subsection{Comparison of binary forecasters}
In this section, we reproduce the experiment of Section~4.2 in \cite{Henzi21-comparing} to compare their testing procedure with ours. We generate $Z_t = \epsilon_t + \theta \sum_{j=1}^4 \epsilon_{t-j}$ and take $Y_t = \1_{\lrc{Z_t > 0}}$. The two forecasters in competition are $p_t = \CPP{Z_t>0}{Z_{t-j},j=2,..,4}$ and $q_t = \CPP{Z_t>0}{Z_{t-j},j=1,..,4}$ so that $q_t$ outperforms $p_t$ so we expect to reject the null hypothesis
$$
\cH_0 : \forall t\geq 1, \,\PEarg[t-1]{\ell(p_t,Y_t) - \ell(q_t,Y_t)} \leq 0 \quad \PP\text{-a.s.} \;,
$$
where $\ell(p,y) = (p-y)^2$ is the Brier score. 
As seen in \Cref{sec:comparison}, this hypothesis can be tested online with the Hoeffding and Capital test supermartingales applied to $X_t = \ell(p_t,Y_t) - \ell(q_t,Y_t)$ with nonnegative bets. We propose to use the Hoeffding test supermatingale with FTL, the Capital test supermartingale with ONS and EWA, where the latter reduces to taking $\lambda = 1/2$ in the definition of $L_n^{\rm C}(\lambda)$.   
In \cite{Henzi21-comparing}, the authors introduce another supermartingale test whose betting strategy is optimized at each time step using the GRO criterion, which requires providing the distribution of $Y_t$ given $\cF_{t-1}$ under the alternative.  
In this experiment, we know the true distribution since $\pi_t:=\CPP{Y_t = 1}{\cF_{t-1}}=q_t$. However, in practice, choosing an appropriate distribution to compute the betting strategy can be challenging and the authors suggest taking a convex combination $\hat\pi_t = \beta p_t + (1-\beta) q_t$ with $\beta \in (0,1)$ where $\beta$ can be chosen using an a priori assumption on the alternative. To limit the dependence on this a priori knowledge, the authors also suggest a mixture strategy which consists in taking the mean of the supermartingales obtained for different $\beta$'s. On the contrary, the betting procedures studied in this paper do not rely on a priori on the alternative since the betting strategies optimize the GRO criterion with the empirical distribution for $\hat\pi_t$. This is a non-negligible advantage in practice. 
In \Cref{fig:comparison}, we compare the Hoeffding and Capital betting procedures with the one of \cite{Henzi21-comparing} for different values of $\beta$ and for mixture strategy obtained by taking the mean of the supermartingales obtained for these $\beta$'s. We compute the mean truncated rejection times for $500$ Monte-Carlo replicates of the experiment with maximum sample size $T=1000$. For the procedure of \cite{Henzi21-comparing}, we observe that  the best rejection time is obtained for $\beta=0$ (Henzi\_0) which is the true distribution and that the test looses power as $\beta$ grows and reaches zero power when $\beta \geq 0.5$. This shows that the choice of $\beta$ can change significantly the power of the testing procedure. The CapitalEWA and CapitalONS perform similarly to the mixture strategy of \cite{Henzi21-comparing}.  

\begin{figure}[h]
    \centering
    \includegraphics[width=.8\linewidth]{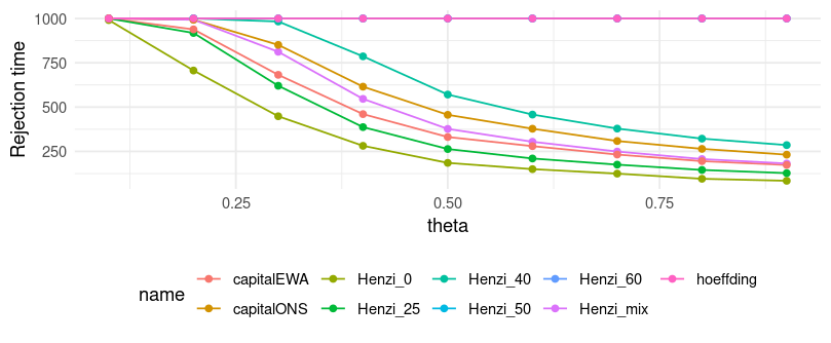}
    \caption{Truncated rejection time for comparison of forecasters. Henzi\_$x$ is the procedure of \cite{Henzi21-comparing} with $\hat\pi_t = (x/100) p_t + (1-x/100)q_t$ and Henzi\_mix is the mean of all the others.}
    \label{fig:comparison}
\end{figure}

\section{Conclusion}
In this paper, we conduct a theoretical and numerical comparison of various test martingales. We establish power properties under non-i.i.d. alternatives, extending beyond the existing literature. Notably, the Capital test supermartingale seems achieves a detection boundary of order $\bigo{\log(n)/n}$ with nearly-optimal rate.  This acceleration is attainable under specific conditions on the second-order properties of the alternative, particularly for betting strategies with low regret. 
Upper bounds on averaged stopping times and extensive numerical experiments do not yield conclusive comparisons between the EWA, ONS, and 2-steps betting strategies. In summary, ONS demonstrates the highest robustness to alternatives in high-dimensional settings, albeit at the cost of significant computational overhead. EWA, while much faster, suffers from a degradation in power properties when applied to complex multivariate alternatives. The 2-steps strategy appears to offer a balanced compromise, supporting the conclusions of \cite{Shekhar21}. 
Even in the most favorable deterministic scenarios, we demonstrate that acceleration is inherently limited due to the boundedness of the betting strategies. Consequently, we establish that our bounds are optimal in a certain sense. Key open questions remain, including the proof of power properties for the 2-steps strategy under fast-rate alternatives that we observe empirically.

\bibliographystyle{apalike}
\bibliography{biblio.bib}

\appendix

\section{Proofs of \Cref{sec:mean-test}}
Throughout this section, we define $\hat{\mu}_n := \frac{1}{n}\sum_{t=1}^n X_t$ for all $n \geq 1$.
\subsection{Preliminary results}
In this section, we provide preliminary lemmas which will be useful for the proofs of the main results.

\begin{lemma}\label{lem:pinelis}
Let $(X_t)_{t\in\nset}$ be an adapted  sequence of random variables valued in a subset of $\rset^d$  with diameter $D$. Then, for all $r > 0$ and $n\in\nset$, we have
$\PP[\norm{\hat\mu_n-\mu_n}_2 > r/n] \leq 2\exp\lr{-\frac{r^2}{2nD^2}}$.
Moreover, if $d = 1$, the same result holds with $D/2$ instead of $D$ and, if we remove the norm in the left-hand-side term, we can divide the right-and-side term by $2$. 
\end{lemma}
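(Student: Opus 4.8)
The plan is to write the centred deviation as the norm of a martingale sum and then apply a vector-valued Azuma--Hoeffding bound. Set
$S_n := \sum_{t=1}^n \lr{X_t - \PEarg[t-1]{X_t}} = n\lr{\hat\mu_n - \mu_n}$, where $\mu_n$ is the conditional mean as in \eqref{eq:def-means}, so that the event $\lrc{\norm{\hat\mu_n - \mu_n}_2 > r/n}$ coincides with $\lrc{\norm{S_n}_2 > r}$. By construction $(S_n)_{n \geq 0}$ is an $(\cF_n)$-martingale with increments $d_t := X_t - \PEarg[t-1]{X_t}$ satisfying $\PEarg[t-1]{d_t} = 0$.

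The first step is to bound the increments: since $X_t$ takes values in a set of diameter $D$, its conditional mean $\PEarg[t-1]{X_t}$ lies in the closed convex hull of that set, which has the same diameter $D$, whence $\norm{d_t}_2 \leq D$ almost surely. The second step is to invoke Pinelis' martingale inequality in the Hilbert space $\rset^d$: for a martingale whose increments satisfy $\norm{d_t}_2 \le D$, one has $\PP[\norm{S_n}_2 > r] \leq 2\exp\lr{-r^2/(2nD^2)}$, which is precisely the claimed bound. The prefactor $2$ and the absence of any dimension-dependent constant reflect that the smoothness constant of a Hilbert space equals $1$, so the Euclidean estimate matches the scalar bound up to this factor.

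For the refinement in dimension $d = 1$ I would avoid the crude magnitude bound and instead exploit that, conditionally on $\cF_{t-1}$, the variable $d_t$ is supported in an interval of length at most $D$ (it is the centred version of $X_t$, which ranges over an interval of length $\le D$). Hoeffding's lemma then gives $\PEarg[t-1]{\rme^{s d_t}} \le \exp\lr{s^2 D^2/8}$ for every $s \in \rset$. Peeling off the conditional expectations one factor at a time via the tower property and optimising the Chernoff bound $\PP[S_n > r] \le \inf_{s>0}\exp\lr{-sr + n s^2 D^2/8}$ at $s = 4r/(nD^2)$ yields $\PP[S_n > r] \le \exp\lr{-2r^2/(nD^2)} = \exp\lr{-r^2/(2n(D/2)^2)}$, which is the one-sided statement with $D/2$ in place of $D$ and no prefactor. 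A union bound with the symmetric estimate for $-S_n$ restores the factor $2$ and gives the two-sided inequality.

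The genuinely nontrivial ingredient is the vector-valued case: a naive coordinatewise application of the scalar Azuma bound would introduce a loss growing with $d$, so the sharp Hilbert-space constant in Pinelis' inequality is what makes the stated dimension-free bound possible, and I expect this to be the main point to get right. Everything else — the martingale structure, the increment bound through the convex-hull argument, and the scalar Hoeffding/Chernoff computation underlying the $d=1$ refinement — is routine.
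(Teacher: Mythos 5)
Your proof is correct and follows essentially the same route as the paper, which simply cites Pinelis' Theorem~3.5 for the $(2,1)$-smooth space $\rset^d$ applied to the martingale differences $d_j = X_j - \PEarg[j-1]{X_j}$, and the Azuma--Hoeffding inequality for the $d=1$ refinement. You have merely filled in the details (the convex-hull increment bound and the explicit Chernoff optimisation) that the paper leaves implicit.
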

\begin{proof}
Apply Theorem~3.5 of \cite{pinelis94} to the $(2,1)$-smooth Banach space $\rset^d$ with $d_j = (X_j - \CPE{X_j}{\cF_{j-1}}) \1_{\lrc{j \leq n}}$. For $d=1$, this is the Azuma-Hoeffding inequality stated in Lemma~A.7 of \cite{Cesa-Bianchi-lugosi-games}.  
\end{proof}

\begin{lemma}\label{lem:argument_capital}
Let $(X_t)_{t\geq 1}$ be an $\rset^d$-valued stochastic process and $\Gamma_t = \set{\gamma \in \rset^d}{\abs{\gamma^\top X_t} \leq 1/2}$. Then for all $n,K \geq 1$ and $\gamma_1,\cdots,\gamma_K \in \bigcap_{t=1}^n \Gamma_t$, we have, with probability at least $1-1/n^2$, for all $1\leq k \leq K$,
\begin{equation}\label{eq:argument_capital}
\sum_{t=1}^n \lr{\gamma_k^\top X_t - (\gamma_k^\top X_t)^2} \geq \sum_{t=1}^n \lr{\PEarg[t-1]{\gamma_k^\top X_t} - 4 \PEarg[t-1]{(\gamma_k^\top X_t)^2}} - 2\log(Kn^2)\;.
\end{equation}
\end{lemma}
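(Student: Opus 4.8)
The plan is to fix $n$ and $K$ and, for each index $k\in\lrc{1,\dots,K}$, exhibit a nonnegative supermartingale started at $1$ whose value at time $n$ is large exactly when \eqref{eq:argument_capital} fails for that $k$; Markov's inequality applied to each of the $K$ supermartingales, followed by a union bound, then delivers the uniform-in-$k$ conclusion. Writing $Z_t^{(k)} := \gamma_k^\top X_t$, the hypothesis $\gamma_k \in \bigcap_{t=1}^n \Gamma_t$ guarantees $\abs{Z_t^{(k)}} \le 1/2$ for every $t \le n$, which is the only structural property of the $\gamma_k$ that I would use. I define
\begin{equation*}
M_t^{(k)} := \exp\lr{\sum_{s=1}^t \lrb{\PEarg[s-1]{Z_s^{(k)}} - 4\,\PEarg[s-1]{\lr{Z_s^{(k)}}^2} - \lr{Z_s^{(k)} - \lr{Z_s^{(k)}}^2}}}\,, \qquad M_0^{(k)} := 1\,,
\end{equation*}
so that, since $\rme^{2\log(Kn^2)} = (Kn^2)^2$, the event $\{M_n^{(k)} \ge (Kn^2)^2\}$ is precisely the failure of \eqref{eq:argument_capital} for index $k$.

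The crux is to show that each $\lr{M_t^{(k)}}_{t\geq 0}$ is an $\lr{\cF_t}$-supermartingale. Since the two conditional-expectation terms in the exponent are $\cF_{s-1}$-measurable, this reduces to the one-step estimate $\PEarg[t-1]{\rme^{-(Z_t^{(k)} - (Z_t^{(k)})^2)}} \le \rme^{-\PEarg[t-1]{Z_t^{(k)}} + 4\PEarg[t-1]{(Z_t^{(k)})^2}}$. I would derive this from the deterministic inequality
\begin{equation*}
\rme^{-(z - z^2)} \le 1 - z + 4z^2 \qquad \text{for all } \abs{z} \le 1/2\,,
\end{equation*}
by taking $\PEarg[t-1]{\cdot}$ (legitimate because $\abs{Z_t^{(k)}} \le 1/2$) and then applying $1 + x \le \rme^x$ with $x = -\PEarg[t-1]{Z_t^{(k)}} + 4\PEarg[t-1]{(Z_t^{(k)})^2}$. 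Establishing this scalar inequality is the only genuinely technical point, and I expect it to be the main obstacle: the function $f(z) = 1 - z + 4z^2 - \rme^{-(z-z^2)}$ satisfies $f(0) = f'(0) = 0$ and $f''(0) = 5 > 0$, so $0$ is a strict local minimum, and one checks that $f$ remains nonnegative on $\seg{-1/2,1/2}$ (its endpoint values are positive and $f'$ does not vanish on $(0,1/2]$), which is exactly what pins down the constant $4$ in front of the second-order terms.

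With the supermartingale property in hand, $\PE[M_n^{(k)}] \le M_0^{(k)} = 1$, so Markov's inequality gives $\PP[M_n^{(k)} \ge (Kn^2)^2] \le (Kn^2)^{-2}$ for each $k$. A union bound over $k = 1,\dots,K$ then shows that the probability that \eqref{eq:argument_capital} fails for some $k \le K$ is at most $K \cdot (Kn^2)^{-2} = (Kn^4)^{-1} \le n^{-2}$, which is the claimed bound (with room to spare, the threshold $2\log(Kn^2)$ being deliberately conservative). The supermartingale construction and the Markov/union-bound step are routine once the scalar inequality above is secured.
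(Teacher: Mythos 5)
Your proof is correct, and it reaches the conclusion by a genuinely different route for the key one-step estimate. Both arguments share the standard skeleton (an exponential nonnegative supermartingale started at $1$, Markov's inequality, and a union bound over $k$), but they diverge on how the conditional-expectation bound is obtained. The paper inserts a factor $1/2$ into the exponent, i.e.\ works with $Z_{k,t} = \exp\lr{\tfrac{1}{2}\lr{\PEarg[t-1]{\gamma_k^\top X_t} - \gamma_k^\top X_t + (\gamma_k^\top X_t)^2 - 4\PEarg[t-1]{(\gamma_k^\top X_t)^2}}}$, precisely so that $\PEarg[t-1]{Z_{k,t}}\leq 1$ can be deduced by Cauchy--Schwarz from two cited exponential inequalities: the self-normalized bound of Bercu--Touati (which needs no boundedness) and Lemma~A.3 of Cesa-Bianchi--Lugosi applied to $4(\gamma_k^\top X_t)^2\in[0,1]$; the constant $4$ emerges from adding the two exponents. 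You instead keep the full exponent and prove the one-step bound directly from the scalar inequality $\rme^{-(z-z^2)}\leq 1-z+4z^2$ on $\seg{-1/2,1/2}$, followed by linearity and $1+x\leq\rme^x$. That inequality is true --- a clean way to nail it down, since your sketch for the left half of the interval is only gestured at, is to write $u=z-z^2\in[-3/4,1/4]$, use $\rme^{-u}\leq 1-u+u^2$ (valid for $u\geq -3/4$), and note $u^2=z^2(1-z)^2\leq \tfrac{9}{4}z^2\leq 3z^2$, whence $1-u+u^2\leq 1-z+4z^2$. Your route is more elementary and self-contained (no external lemmas, no Cauchy--Schwarz splitting), uses the hypothesis $\abs{\gamma_k^\top X_t}\leq 1/2$ exactly once and transparently, and incidentally yields the slightly sharper failure probability $(Kn^4)^{-1}$ instead of $n^{-2}$; the paper's route shows how the bound drops out of known self-normalized concentration tools and explains where the constant $4$ comes from in that framework. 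Either proof establishes the lemma as stated.
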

\begin{proof}
Let for all $n \geq 1$ and $k=1,\cdots,K$, 
\begin{equation}\label{eq:Bnk}
    B_{n,k} := \lrc{\sum_{t=1}^n \lr{\gamma_k^\top X_t - (\gamma_k^\top X_t)^2} \geq \sum_{t=1}^n \lr{\PEarg[t-1]{\gamma_k^\top X_t} - 4 \PEarg[t-1]{(\gamma_k^\top X_t)^2}} - 2\log(Kn^2)} \;,
\end{equation} 
so that we want to show $\PP[\bigcup_{k=1}^K B_{n,k}^c] \leq 1/n^2$.
Then, letting 
$$
Z_{k,t} := \exp\lr{\frac{\PEarg[t-1]{\gamma_k^\top X_t} - \gamma_k^\top X_t + (\gamma_k^\top X_t)^2 - 4\PEarg[t-1]{(\gamma_k^\top X_t)^2)}}{2} } \;,
$$
we have, for all $n,K \geq 1$,
\begin{align*}
\PP[\bigcup_{k=1}^K B_{n,k}^c]
= \PP[\max_{1\leq k \leq K} \prod_{t=1}^n Z_{k,t} > Kn^2] 
\leq \PP[\sum_{k=1}^K \prod_{t=1}^n Z_{k,t} > Kn^2] 
&\leq \frac{1}{Kn^2}\PE[\sum_{k=1}^K \prod_{t=1}^n Z_{k,t}] \\
&\leq \frac{1}{Kn^2} \sum_{k=1}^K \PE[\prod_{t=1}^n Z_{k,t}] \;,
\end{align*}
and the result follows if $\PEarg[t-1]{Z_{k,t}} \leq 1$ for all $t \geq 1$ and $k=1,\cdots,K$ because, in this case, $\PE[\prod_{t=1}^n Z_{k,t}] \leq \PEarg{\prod_{t=1}^{n-1} Z_{k,t} \PEarg[n-1]{Z_{k,t}}} \leq \PE[\prod_{t=1}^{n-1} Z_{k,t}]$ and recursively using this argument leads to $\PE[\prod_{t=1}^n Z_{k,t}] \leq 1$.

To conclude the proof, we now let $t \geq 1$ and $k \in \lrc{1,\cdots,K}$ and show that $\PEarg[t-1]{Z_{k,t}} \leq 1$. From Lemma~B.1 of \cite{bercu-touati-08} and using the arguments of the proof of Proposition~3.1 of \cite{wintenberger2023soco}, we have that
$$
\PEarg[t-1]{\exp\lr{s(Y_t-\PEarg[t-1]{Y_t}) - s^2 (\PEarg[t-1]{Y_t^2} + Y_t^2)}} \leq 1 \;,
$$
holds for any $s \in \rset$ and any random variable $Y_t \in \rset^\nset$. 
Applying this result to $Y_t= \gamma_k^\top X_t$ and $s = - 1$ gives 
$$
\PEarg[t-1]{\exp\lr{\PEarg[t-1]{\gamma_k^\top X_t} - \gamma_k^\top X_t - (\gamma_k^\top X_t)^2- \PEarg[t-1]{(\gamma_k^\top X_t)^2}}} \leq 1 \;.
$$
On the other hand, Applying Lemma~A.3 of \cite{Cesa-Bianchi-lugosi-games} with $s=1/2$ and $X = 4(\gamma_k^\top X_t)^2 \in [0,1]$ yields
$$
\PEarg[t-1]{\exp\lr{2 (\gamma_k^\top X_t)^2-3\PEarg[t-1]{(\gamma_k^\top X_t)^2}}} \leq 1 \;,
$$
where we have used that $4(e^{1/2}-1) \leq 3$. Hence, the Cauchy-Schwarz inequality and the inequalities of the two previous displays give
\begin{align*}
\PEarg[t-1]{Z_{k,t}} 
&\leq \sqrt{\PEarg[t-1]{\rme^{\PEarg[t-1]{\gamma_k^\top X_t} - \gamma_k^\top X_t - (\gamma_k^\top X_t)^2- \PEarg[t-1]{(\gamma_k^\top X_t)^2}}}} \sqrt{\PEarg[t-1]{\rme^{2 (\gamma_k^\top X_t)^2-3\PEarg[t-1]{(\gamma_k^\top X_t)^2}}} } \\ 
&\leq 1 \;.    
\end{align*}
This concludes the proof.
\end{proof}

\subsection{Proofs of \Cref{sec:lower-bounds}}
\begin{proof}[Proof of \Cref{prop:limit-case-hoeffding}]
Take $d = 1$ and consider a deterministic process $X_t = z_t - z_{t-1} \geq 0$ for all $t \geq 1$ where  $(z_t)_{t \in \nset}$ is a  deterministic non-decreasing sequence with $z_0 = 0$. In this case $\norm{\mu_n}_2 = \frac{z_n}{n}$ and for all $n\geq 1$ we have 
$$
\log W_n^{\rm H} = \max_{\lambda \in \rset^d} \log L_n^{\rm H}(\lambda) - \cR_n
= \frac{2 z_n^2}{n D^2} - \cR_n 
< \log(1/\alpha) - \cR_n \;,
$$
where the last inequality holds if we take $z_n = m \sqrt{n}$ with $m < D / \sqrt{2\log(1/\alpha)}$. Assertion~\ref{itm:limit-case-hoeffding-1} follows by \Cref{ass:regret-hoeffding}. Similarly, we get Assertion~\ref{itm:limit-case-hoeffding-2} by taking $z_n = n m_n = \smallo{\sqrt{n}}$.
\end{proof}

\begin{proof}[Proof of \Cref{prop:hoeff-lower-power}]
    We take the same process $(X_t)_{t\in \nset}$ as in the proof of \Cref{prop:limit-case-hoeffding} with $z_n = nm$ so that $\log W_n^{\rm H} \leq \frac{2 n m^2}{D^2} - \cR_n$ for all $n \geq 1$ and $\log W_n^{\rm H} \geq \log(1/\alpha)$ is possible only if $n \geq \frac{D^2\log(1/\alpha)}{2m^2}$. 
\end{proof}

\begin{proof}[Proof of \Cref{prop:limit-case-capital}]
Take $d = 1$ and consider a deterministic process $X_t = z_t - z_{t-1} \geq 0$ for all $t \geq 1$ where $(z_t)_{t \in \nset}$ is a  deterministic non-decreasing sequence with $z_0 = 0$. In this case $\norm{\mu_n}_\infty = \frac{z_n}{n}$ and for all $n\geq 1$ we have 
$$
\log W_n^{\rm C} = \sum_{t=1}^n \log(1+\gamma_t X_t) \leq \sum_{t=1}^n \gamma_t X_t \leq \frac{1}{2B} \sum_{t=1}^n X_t = \frac{z_n}{2B} \;.
$$
Hence, taking $z_n = m n$ and $m < 2 B \log(1/\alpha)$, we get that $\log W_n^{\rm C} < \log(1/\alpha)$ for all $n > 1$ and Assertion~\ref{itm:limit-case-capital-1} follows. Similarly, we get Assertion~\ref{itm:limit-case-capital-2} by taking $z_n = n m_n = \smallo{1}$.
\end{proof}

\begin{proof}[Proof of \Cref{prop:capital-lower-power}]
    We take the same process $(X_t)_{t\in \nset}$ as in the proof of \Cref{prop:limit-case-capital} with $z_n = nm$ so that $\log W_n^{\rm H} \leq \frac{nm}{2B}$ for all $n \geq 1$. Hence $\log W_n^{\rm H} \geq \log(1/\alpha)$ is possible only if $n \geq \frac{2B\log(1/\alpha)}{m}$. 
\end{proof}

\subsection{Proofs of \Cref{sec:power-general}}

\begin{proof}[\bf Proof of \Cref{thm:power-hoeff-nodim}]
Define 
$A_n := \lrc{\norm{\mu_n - \hat\mu_n}_2 \leq 2D\sqrt{\log(n)/n}}$ and $B_n := \lrc{\norm{\mu_n}_2 \geq m_n}$ for all $n\geq 1$.
By \Cref{lem:pinelis}, we have $\PP[A_n^c] \leq 2/n^2$ and by definition we have $\varrho = \sum_{n\geq 1} \PP[B_n^c]$. Then letting $G_n := \lrc{\cR_n\leq r_n}$, we get from the definition of $\cR_n$ and the inequality $\norm{\hat\mu_n}_2 \geq \lr{\norm{\mu_n}_2 - \norm{\mu_n-\hat\mu_n}_2}_+$, that for all $n \geq 1$, on $G_n \cap A_n \cap B_n$,
$$
\log W_n^{\rm H} \geq \frac{2n\norm{\hat\mu_n}_2^2}{D^2} - r_n  
\geq \frac{2n\lr{m_n - 2D\sqrt{\log(n)/n}}_+^2}{D^2}- r_n  = u_n \;.
$$
Hence, letting $E_n := \lrc{\log W_n^{\rm H} \geq u_n}$, we have $\PP[E_n^c \cap G_n \cap A_n \cap B_n] = 0$ for all $n\geq 1$, and therefore
$$
\sum_{n\geq 1} \PP[E_n^c] 
\leq \rho + \varrho + \sum_{n \geq 1} \PP[E_n^c \cap G_n \cap B_n] 
\leq \rho + \varrho + \sum_{n \geq 1} \PP[A_n^c]  
\leq \rho + \varrho + \pi^2/3 \;.
$$
\end{proof}

\begin{proof}[\bf Proof of \Cref{thm:capital-power}] 
Using the fact that  $\log(1+x) \geq x-x^2$ for any $x \geq  -1/2$, we get that for all $n \geq 1$  and $\gamma \in \Gamma$, 
\begin{equation}\label{eq:bound-W-multi}
\log(W_n) \geq \max_{\gamma \in \Gamma} \lrc{\sum_{t=1}^n \gamma^\top X_t -  \sum_{t=1}^n (\gamma^\top X_t)^2}  - \cR_n \;.  
\end{equation}
Fix $n \geq 1$ and define for all $k=1,\cdots, 2d$, 
$C_n := \lrc{\nu_{n,\infty} \leq v_n}$, 
$D_{n,k}:= \lrc{\rme_k^\top \mu_n \geq m_n}$ and $G_n := \lrc{\cR_n \leq r_n}$.
Let also $D_n := \bigcup_{k=1}^{2d} D_n(\rme_k)$, so that $\lrc{\norm{\mu_n}_{\infty} \geq m_n} \subset D_n$ and $\sum_{n\geq 1} \PP[(C_n \cap D_n)^c] \leq \varrho$.  
Take now $\epsilon_n \in \Gamma$ so that for all $k=1,\cdots,2d$, $\gamma_{n,k} := \epsilon_n \rme_k \in \Gamma$ and define $B_{n,k}$ by \eqref{eq:Bnk}. Then, \Cref{lem:argument_capital} implies that $\PP[\bigcup_{k=1}^{2d} B_{n,k}^c] \leq 1/n^2$ and \eqref{eq:bound-W-multi} gives that, on $G_n \cap B_{n,k}\cap C_n \cap D_{n,k}$, we have
\begin{align*}
\log(W_n) 
&\geq \sum_{t=1}^n (\PEarg[t-1]{\gamma_{n,k}^\top X_t} - 4 \PEarg[t-1]{(\gamma_{n,k}^\top X_t)^2}) - r_n - 2\log(2dn^2) \\
&\geq \gamma_{n,k}^\top \lr{\sum_{t=1}^n \PEarg[t-1]{X_t}} - 4 \norm{\gamma_{n,k}}_1^2 \sum_{t=1}^n \PEarg[t-1]{\norm{X_t}_\infty^2} - r_n -2\log(2dn^2) \\
&\geq n\epsilon_n m_n - 4n\epsilon_n^2 v_n - r_n - 2\log(2dn^2) = u_n\;.
\end{align*}
Hence letting $E_n := \lrc{\log W_n \geq u_n}$, we have shown that $\PP[E_n^c \cap G_n \cap B_{n,k}\cap C_n \cap D_{n,k}] = 0$, for all $n \geq 1$ and $k\in \lrc{1,\cdots,2d}$. Finally, we get
\begin{align*}
\sum_{n\geq 1} \PP[E_n^c] 
\leq \rho + \varrho + \sum_{n\geq 1} \PP[E_n^c \cap G_n \cap C_n \cap D_n] 
&\leq \rho + \varrho + \sum_{n\geq 1} \PP[\bigcup_{k=1}^{2d} E_n^c \cap G_n \cap C_n \cap D_{n,k}] \\
&\leq \rho + \varrho + \sum_{n\geq 1}  \PP[\bigcup_{k=1}^{2d} B_{n,k}^c]\;.
\end{align*}
This concludes the proof of the first par since $\sum_{n\geq 1}  \PP[\bigcup_{k=1}^{2d} B_{n,k}^c] \leq \pi^2/6$. Then the values of $u_n$ defined in \eqref{eq:un-capital-aggregation} and \eqref{eq:un-capital} are respectively obtained  by taking $\epsilon_n=\epsilon$ and taking $\epsilon_n = \frac{1}{2B} \wedge \frac{m_n}{8v_n}$ and using the fact that $m_n - 4\epsilon_n v_n \geq m_n/2$.  

\end{proof}

\begin{proof}[\bf Proof of \Cref{thm:capital-2step-bound}]
    This result is obtained by taking $\cG = \set{x \mapsto u^\top x}{u \in \cB_{1/B}^d}$  in the setting of \Cref{sec:supg} and 
 \Cref{thm:capital-g-bound} which we prove in \Cref{sec:proofs_extensions}.
\end{proof}

\subsection{Proofs of \Cref{sec:explicit-bounds-d}}

We start by proving all the regret bounds.
\begin{proof}[Proof of \Cref{lem:regret-ftl}]
Let $f_t(\lambda) = \frac{\norm{\lambda}_2^2D^2}{8} - \lambda^\top Y_t$, then since $\lambda_{t+1} = \frac{4\hat\mu_t}{D^2} = \frac{4 Y_t}{D^2 t} + \lr{1-\frac{1}{t}}\lambda_t$, we get that
\begin{align*}
     f_t(\lambda_t) - f_t(\lambda_{t+1})
= \frac{D^2}{8}\lr{\norm{\lambda_t - \frac{4 Y_t}{D^2}}_2^2 - \norm{\lambda_{t+1} - \frac{4Y_t}{D^2}}_2^2} 
&= \frac{D^2}{8}\norm{\lambda_t - \frac{4Y_t}{D^2}}_2^2 \lr{1-\lr{1-\frac{1}{t}}^2} \\
&\leq \frac{4}{D^2 t}\norm{\hat\mu_{t-1} - Y_t}_2^2 \\ 
&\leq \frac{4}{t} \;,
\end{align*}
where the last inequality comes from the fact that $\hat\mu_{t-1}$ and $Y_t$ are in the same subset of diameter $D$ by \Cref{ass:bounded-y}. 
Hence, by Lemma~3.1 of \cite{Cesa-Bianchi-lugosi-games}) we have 
$$
\max_{\lambda\in\rset^d} \log L_n^{\rm H}(\lambda) - \log W_n^{\rm H, FTL} \leq \sum_{t=1}^n \lr{f_t(\lambda_t) - f_t(\lambda_{t+1})}
\leq 4 \sum_{t=1}^n \frac{1}{t} \leq 4 (1 + \log(n)) \;.
$$
\end{proof}
\begin{proof}[Proof of \Cref{lem:regret-ewa}]
Apply Proposition~3.1 of \cite{Cesa-Bianchi-lugosi-games} to $\ell(\gamma,x) =  -\log(1+\gamma^\top x)$ which is $1$-exp-concave in its first argument.  
\end{proof}
\begin{proof}[Proof of \Cref{lem:regret-ons}]
 Lemma~17 of \cite{cutkosky2018blackbox} gives that 
 $$
 \max_{\norm{\gamma}_2\leq 1/(2B)} \log L_n^{\rm C}(\gamma) - \log W_n^{\rm C} \leq d\lr{\frac{\beta}{8} + \frac{2}{\beta}\log(1+4n)} \;,
 $$ with $\beta = \frac{2-\log(3)}{2}$ and we conclude using the fact that $\log(1+4n) \leq \log(5n) = \log(5) + \log(n)$ and evaluating the constants. For the case where $d = 1$, $\Gamma = [0,1/(2B)]$ and $S = [0,1/2]$, the proof of  Lemma~17 of \cite{cutkosky2018blackbox} can be easily translated. 
\end{proof}

\begin{proof}[Proof of \Cref{lem:oga-stoch-regret}]
For all $\eta \in \cB_{1/2B}^d$, we have 
$$
\sum_{t=1}^n \PEarg[t-1]{\eta^\top X_t} - \sum_{t=1}^n \PEarg[t-1]{\eta_t^\top X_t}  = \sum_{t=1}^n (\eta-\eta_t)^\top X_t + \sum_{t=1}^n (\eta - \eta_t)^\top (\PEarg[t-1]{X_t} - X_t)\;,
$$
where the first sum is the regret of the OGA algorithm and is bounded by $\sqrt{n}$ (see \cite[Section~A.4]{Shekhar21}) and the second is a martingale with bounded differences and is therefore bounded by $2\sqrt{n\log(n)}$ with probability at least $1-1/n^2$ by  \cite[Lemma~A.7]{Cesa-Bianchi-lugosi-games}
\end{proof}

Now, note that all the betting strategies used in \Cref{cor:hoeffding-examples-multi,cor:capital-examples-aggregation-multi,cor:capital-examples-multi} achieve regrets bounded by $r\log(n) + r'$ where $r=r'=4$ for \Cref{cor:hoeffding-examples-multi}, $r=0$ and $r'=\log(2d)$ for \Cref{cor:capital-examples-aggregation-multi} and $r=4.5d$ and $r'=7.2d$ for \Cref{cor:capital-examples-multi}. 
With this common form of regret, the proofs of \Cref{cor:hoeffding-examples-multi,cor:capital-examples-aggregation-multi,cor:capital-examples-multi} now reduce to bounding $\aleph((u_n)_{n\geq 1}, x)$ with $r_n = r\log(n)$ and $x=\log(1/\alpha)+r'$ which we do for each corollary. 
The proofs rely on the following lemma
\begin{lemma}\label{lem:aleph-examples}
The following assertions hold.
\begin{enumerate}
    \item\label{itm:aleph-comparison} Assume that $u_n \geq u_n^{(1)} \1_{\lrc{n<n_0}} + u_n^{(2)} \1_{\lrc{n\geq n_0}}$. Then for all $x\in\rset$,
    $$
    \aleph\lr{(u_n)_{n\geq 1},x} \leq \lr{n_0\wedge \aleph\lr{(u_n^{(1)})_{n\geq 1},x}} \vee \aleph\lr{(u_n^{(2)})_{n\geq 1},x}\;.
    $$
\item\label{itm:aleph-nbeta-log} For all $z \in \rset$, define $\cL(z)$ is the unique solution of $\log(y)/y = z$ on $[\rme, +\infty)$ when $z \leq 1/e$ and equals $0$ otherwise. Then, for any $a,b, \beta > 0$ and $x\in \rset$ we have 
$$
\aleph((an^{\beta} - b\log(n))_{n\geq 1},x) = \ceil{\lr{\rme^{-\beta x /b} \cL\lr{\frac{a\beta}{b}\rme^{-\beta x/b}}}^{1/\beta}} \leq \ceil{2^{1/\beta}\lr{\linlog\lr{\frac{b}{a\beta}} + \frac{x}{a}}^{1/\beta}} \;. $$
\end{enumerate}
\end{lemma}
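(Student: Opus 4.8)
The plan is to handle the two assertions by completely different means: the first by a short case analysis directly from the definition of $\aleph$, and the second by interpolating the sequence with a smooth unimodal function and then applying a Young-type inequality.

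For Assertion~\ref{itm:aleph-comparison} I would set $N_1 := \aleph((u_n^{(1)})_{n\geq 1},x)$, $N_2 := \aleph((u_n^{(2)})_{n\geq 1},x)$ and $N := (n_0\wedge N_1)\vee N_2$, and check that $\inf_{k\geq N}u_k \geq x$, which gives $\aleph((u_n)_{n\geq 1},x)\leq N$ by definition of $\aleph$. Fixing $k \geq N$, if $k\geq n_0$ then the hypothesis gives $u_k \geq u_k^{(2)}$, and since $k \geq N\geq N_2$ the definition of $N_2$ yields $u_k^{(2)}\geq x$; if $k<n_0$ then $k\geq N\geq n_0\wedge N_1$ forces $n_0\wedge N_1 = N_1$ (otherwise $k\geq n_0$, a contradiction), so $k\geq N_1$ and the hypothesis together with the definition of $N_1$ gives $u_k\geq u_k^{(1)}\geq x$. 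This is routine; the only subtlety is spotting that the other branch of the minimum leads to a vacuous subcase.

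For Assertion~\ref{itm:aleph-nbeta-log} I would introduce the continuous interpolation $f(t):=at^\beta - b\log t$ on $[1,+\infty)$, so that $u_n=f(n)$. From $f'(t)=(a\beta t^\beta - b)/t$ one sees that $f$ is strictly decreasing on $[1,t^\ast]$ and strictly increasing on $[t^\ast,+\infty)$ with $t^\ast:=(b/(a\beta))^{1/\beta}$, and $f(t)\to+\infty$; hence $f$ is unimodal and $\lrc{t\geq 1 : f(t)<x}$ is an interval. In the nontrivial regime $x>f(t^\ast)$ its right endpoint is the unique root $t_x>t^\ast$ of $f(t)=x$ on the increasing branch, the largest index $k$ with $u_k<x$ is the largest integer below $t_x$, and therefore $\aleph((u_n)_{n\geq 1},x)=\ceil{t_x}$; the complementary regime $x\leq f(t^\ast)$ produces no sub-threshold index, so $\aleph=1$, which is consistent with the claimed estimate. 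To identify $t_x$ I set $y:=t_x^\beta$, so that $at_x^\beta-b\log t_x=x$ becomes $ay-(b/\beta)\log y=x$; substituting $y=\rme^{-\beta x/b}w$ turns this into $\log(w)/w=(a\beta/b)\rme^{-\beta x/b}$, which is exactly the defining equation of $\cL$, giving $y=\rme^{-\beta x/b}\cL((a\beta/b)\rme^{-\beta x/b})$ and hence the stated equality. For the upper bound I would rewrite the root equation as $y=x/a+(b/(a\beta))\log y$ and apply the elementary inequality $\log y\leq y/(2c)+\log(2c)-1$ (valid for all $y>0$, coming from $\log u\leq u-1$) with $c:=b/(a\beta)$; since $\log 2-1<0$ this collapses to $y/2\leq x/a+c\log c=x/a+\linlog(b/(a\beta))$, so $t_x=y^{1/\beta}\leq 2^{1/\beta}\lr{\linlog(b/(a\beta))+x/a}^{1/\beta}$, and monotonicity of $\ceil{\cdot}$ finishes the proof.

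The main obstacle I anticipate is the bookkeeping in Assertion~\ref{itm:aleph-nbeta-log} needed to pin down $\aleph$ as exactly $\ceil{t_x}$: one must use unimodality to argue that the last sub-threshold index lies on the increasing branch and that $\inf_{k\geq n}u_k$ coincides with $u_n$ once $n$ passes the minimizer $t^\ast$, and one must carefully excise the degenerate regime $z>1/e$ where $\cL=0$ and $\aleph=1$ so that the equality and the subsequent bound (whose right-hand side must have nonnegative base) remain meaningful. By comparison, the algebraic identification with $\cL$ and the Young-type estimate are straightforward.
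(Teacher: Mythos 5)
Your proof is correct. Assertion~\ref{itm:aleph-comparison} is argued exactly as in the paper (the paper splits on whether $\aleph\lr{(u_n^{(1)})_{n\geq 1},x}\geq n_0$, you split on whether $k\geq n_0$; the two case analyses are equivalent), and for Assertion~\ref{itm:aleph-nbeta-log} your substitution reducing the root equation to $\log(w)/w=\frac{a\beta}{b}\rme^{-\beta x/b}$ is precisely the paper's reduction, phrased through the continuous interpolant $f$ rather than directly on the discrete condition $an^\beta-b\log(n)\geq x$. The only genuine divergence is the final estimate: the paper bounds $\cL(z)$ intrinsically, first deriving $\cL(z)\leq 1/z^2$ from $\log u\leq\sqrt{u}$ and then bootstrapping via $\cL(z)=\log(\cL(z))/z\leq 2\log(1/z)/z$, whereas you apply the tangent-line inequality $\log y\leq y/(2c)+\log(2c)-1$ directly to the fixed-point equation $y=x/a+c\log y$; both yield the same factor $2$ and the same bound, and your route has the small advantage of never leaving the variable $y$ in which the answer is expressed. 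One shared caveat (not a defect of your argument relative to the paper's): the claimed \emph{equality} $\aleph=\ceil{t_x}$ silently assumes that some integer actually lies in the sub-threshold interval near $t_x$; in the degenerate situation where none does, one only gets $\aleph\leq\ceil{t_x}$, which is all that is used downstream — you correctly flag this bookkeeping, and the paper glosses over it in the same way.
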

\begin{proof}
To prove Assertion~\ref{itm:aleph-comparison}, let $\aleph_i = \aleph\lr{(u_n^{(i)})_{n\geq 1},x}$ so that we need to prove that $u_n \geq x$  for all $n \geq n_1 := (n_0 \wedge \aleph_1) \vee \aleph_2$. First assume that $\aleph_1 \geq n_0$. Then $n_1 = n_0 \vee \aleph_2$ and for all $n \geq n_1$, we have $u_n \geq u_n^{(2)} \geq x$. Now, assume that $\aleph_1 \leq n_0$. Then $n_1 = \aleph_1 \vee \aleph_2$ and for all $n \geq n_1$, we have $u_n \geq u_n^{(1)} \wedge u_n^{(2)} \geq x$. This concludes the proof of Assertion~\ref{itm:aleph-comparison}. 
To prove Assertion~\ref{itm:aleph-nbeta-log}, note that $an^\beta - b \log(n) \geq x$ if and only if $\frac{\log(n^\beta \rme^{\beta x/b})}{n^\beta \rme^{\beta x/b}} \leq \frac{a\beta}{b}\rme^{-\beta x/b}$ and the first equality follows. The second inequality follows from the fact that, for any $z \leq \rme^{-1}$, we have $z = \frac{\log(\cL(z))}{\cL(z)} \leq \frac{1}{\sqrt{\cL(z)}}$ which implies that  $\cL(z)\leq \frac{1}{z^2}$ and finally $\cL(z) = \frac{\log(\cL(z))}{z} \leq \frac{2 \log(1/z)}{z}$.
\end{proof}

\begin{proof}[\bf Proof of \Cref{cor:hoeffding-examples-multi}]  For Assertion~\ref{itm:hoeffding-examples-multi-1}, we have $u_n =  \frac{2n}{D^2}\lr{mn^{-a} - 2D\sqrt{\log(n)/n}}_+^2- r \log(n)$ which is greater than $\frac{m^2}{2D^2}n^{1-2a} - r \log(n)$ if $n \geq \aleph\lr{\lr{m^2 n^{1-2a}-16 D^2\log(n)}_{n\geq 1},0}$. Hence, by \Cref{lem:aleph-examples}, we get
    $$
    \aleph((u_n)_{n\geq 1}, x) \leq \ceil{\lr{2\linlog\lr{\frac{16D^2}{m^2(1-2a)}}}^{1/(1-2a)}} \vee \ceil{\lr{2\linlog\lr{\frac{2rD^2}{m^2(1-2a)}} + \frac{4D^2x}{m^2}}^{1/(1-2a)}} \;.
    $$
    For Assertion~\ref{itm:hoeffding-examples-multi-2}, we have $u_n = \lr{2(m/D - 2)^2- r}\log(n)$ and the result follows easily. 
\end{proof}

\begin{proof}[\bf Proof of \Cref{cor:capital-examples-aggregation-multi}] 
We bound $\aleph := \aleph((u_n)_{n\geq 1}, x)$   for each case. 
\begin{itemize}
    \item If $v_n=vn^{-a}$, we get $u_n \geq \epsilon (m-4\epsilon v)n^{1-a} - 2\log(2d) - (r+4)\log(n)$ and \Cref{lem:aleph-examples} gives that, if $\epsilon < \frac{m}{4v}$,
    $$
\aleph \leq 
 \ceil{\lr{2\linlog\lr{\frac{r+4}{\epsilon(m-4\epsilon v)(1-a)}} + \frac{2(x+2\log(2d))}{\epsilon(m-4\epsilon v)}}^{1/(1-a)}} \;.
$$
\item If $v_n = vn^{-2b}$ with $a/2 < b < 1/2$, then for all $n \geq \ceil{\lr{\frac{8\epsilon v}{m}}^{1/(2b-a)}}$ we have $u_n \geq \frac{\epsilon m n^{1-a}}{2} -2\log(2d) - (r+4)\log(n)$ and \Cref{lem:aleph-examples} gives
$$
\aleph \leq 
 \ceil{\lr{\frac{8\epsilon v}{m}}^{1/(2b-a)}} \vee \ceil{\lr{2\linlog\lr{\frac{2(r+4)}{\epsilon m(1-a)}} + \frac{4(x+2\log(2d))}{\epsilon m}}^{1/(1-a)}} \;.
$$
\item If $v_n = vn^{-1}$, then $u_n \geq \epsilon mn^{1-a} - 4\epsilon^2 v - 2\log(2d) - (r+4)\log(n)$ and \Cref{lem:aleph-examples} gives 
$$
\aleph \leq 
\ceil{\lr{2\linlog\lr{\frac{r+4}{\epsilon m(1-a)}} + \frac{2\lr{x + 2\log(2d) + 4\epsilon^2 v}}{\epsilon m}}^{1/(1-a)}} \;.
$$
\item If $v_n = v\log(n)/n$, then $u_n \geq \epsilon m n^{1-a} - \lr{4\epsilon^2 v+r+4}\log(n) - 2\log(2d)$ and \Cref{lem:aleph-examples} gives
$$
\aleph \leq 
\ceil{\lr{2\linlog\lr{\frac{(r+4+4\epsilon^2 v)}{\epsilon m(1-a)}} + \frac{2(x+2\log(2d))}{\epsilon m}}^{1/(1-a)}} \;.
$$
\end{itemize}
\end{proof}

\begin{proof}[\bf Proof of \Cref{cor:capital-examples-multi}]
    We have $u_n \geq u_n^{(1)} \wedge u_n^{(2)}$ with $u_n^{(1)} = \frac{mn^{1-a}}{4B} - (r+4)\log(n)-2\log(2d)$ and $u_n^{(2)} = \frac{m^2n^{1-2a}}{16v_n}  - (r+4)\log(n)-2\log(2d)$. Let $\aleph_1 = \aleph\lr{(u_n^{(1)})_{n\geq 1}, x}$, $\aleph_2 = \aleph\lr{(u_n^{(2)})_{n\geq 1}, x}$ and $n_1 = \inf\set{n \geq 1}{\forall k \geq n, u_k^{(1)} \geq u_k^{(2)}}$, $n_2 = \inf\set{n \geq 1}{\forall k \geq n, u_k^{(2)} \geq u_k^{(1)}}$. Then \Cref{lem:aleph-examples} gives 
            $$
\aleph\lr{(u_n)_{n\geq 1}, x} = 
\begin{cases}
    \aleph_1 \vee (n_1 \wedge \aleph_{2}) & \text{if } n_1 < +\infty \\
    \aleph_2 \vee (n_2 \wedge \aleph_{1}) & \text{if } n_2 < +\infty 
\end{cases} \;.
$$
By \Cref{lem:aleph-examples}, we have 
$
\aleph_1 \leq \ceil{\lr{2\linlog\lr{\frac{4B(r+4)}{m(1-a)}} + \frac{8B(x+2\log(2d))}{m}}^{1/(1-a)}} \;.
$
We compute the other terms for the different values of $v_n$.
\begin{itemize}
\item If $v_n = vn^{-2b}$, we have 
$u_n^{(2)} = \frac{m^2}{16v}n^{1-2(a-b)}  - 2\log(2d) - (r+4)\log(n)$ and \Cref{lem:aleph-examples} gives that, if $b > a-1/2$, 
$
\aleph_2 \leq \ceil{\lr{2\linlog\lr{\frac{16v(r+4)}{m^2(1-2(a-b))}} + \frac{32v(x+2\log(2d))}{m^2}}^{\frac{1}{1-2(a-b)}}}\;. 
$
We also have 
$
n_1 = \ceil{\lr{\frac{Bm}{4v}}^{\frac{1}{(a-2b)_+}}}$ and $n_2 = \ceil{\lr{\frac{4v}{Bm}}^{\frac{1}{(2b-a)_+}}}$. 
\item If $v_n = v\log(n)/n$, we have, for all $n\geq 3$, since  $\log(n) \leq \log(n)^2$,
$$
u_n^{(2)} + 2\log(2d)
= \frac{m^2n^{2(1-a)}}{16v\log(n)}  - (r+4)\log(n) 
\geq \frac{1}{\log(n)^2}\lr{{\frac{m^2n^{2(1-a)}}{16v}  - (r+4)\log(n)^2}} \;.
$$
Hence
\begin{align*}
\aleph_2 
&\leq 3 \vee \aleph\lr{\lr{\frac{m^2n^{2(1-a)}}{16v}-(r+4+x+2\log(2d))\log(n)^2}_{n\geq 1}, 0}    \\
&= 3 \vee \aleph\lr{\lr{\frac{mn^{1-a}}{4\sqrt{v}}-\sqrt{r+4+x+2\log(2d)}\log(n)}_{n\geq 1}, 0}    \\
&\leq 3 \vee \ceil{\lr{2\linlog\lr{\frac{2\sqrt{v(r+4+x+2\log(2d))}}{m(1-a)}}}^{1/(1-a)}} \;,
\end{align*}
by \Cref{lem:aleph-examples}. We also 
have $n_2 = \ceil{\lr{2\linlog\lr{\frac{4v}{Bm(1-a)}}}^{1/(1-a)}}$.
\end{itemize}
\end{proof}

\begin{proof}[\bf Proof of \Cref{cor:capital-examples-2step}]   
\Cref{lem:oga-stoch-regret} gives that we can take $s_n = 5\sqrt{n\log(n)}$ for $n \geq 3$. Then, for all $n \geq \aleph_0 := 3\vee \aleph\lr{\lr{nm_n-10\sqrt{n\log(n)}}_{n\geq 1},0} = \ceil{\lr{2\linlog\lr{\frac{100}{m^2(1-2a)}}}^{1/(1-2a)}}$, we have $u_n \geq u_n'$  with $u_n' := \frac{mn^{1-a}}{8} \lr{1 \vee \frac{m n^{1-a}}{8nv_n}} - r\log(n)-r'$. Hence,
$\aleph\lr{(u_n)_{n\geq 1}, \log(1/\alpha)} \leq \aleph_0 \vee \aleph\lr{(u_n')_{n\geq 1}, \log(1/\alpha)}$,
where the second term is computed as in the proof of \Cref{cor:capital-examples-multi} with different constants
\end{proof}

\section{Proofs of \Cref{sec:extensions}}\label{sec:proofs_extensions}
\begin{proof}[Proof of \Cref{thm:power-hoeff-positif}]
The proof follows the same steps as the proof of \Cref{thm:power-hoeff-nodim} with $A_n := \lrc{\mu_n - \hat\mu_n \leq D\sqrt{\log(n)/n}}$  and $B_n := \lrc{\mu_n \geq m_n}$ and using \cite[Lemma~A.7]{Cesa-Bianchi-lugosi-games} instead of \Cref{lem:pinelis} and the fact that $\max_{\lambda \geq 0} \log L_n^{\rm H}(\lambda) = \frac{2n(\hat{\mu}_n)_+^2}{D^2}$. 
\end{proof}

\begin{proof}[Proof of \Cref{thm:power-capital-positif}]
   The proof follows the same steps as the proof of \Cref{thm:capital-power} where we replace the family $(\rme_i)_{i=1}^{2d}$ by $\lrc{1}$.
\end{proof}

\begin{proof}[\bf Proof of \Cref{thm:hoeffding-g-bound}]
Define for all $n \geq 1$, 
$A_n := \lrc{\sum_{t=1}^n g_t(X_t) \geq \sum_{t=1}^n \PEarg[t-1]{g_t(X_t)} - 2\sqrt{n\log(n)}}$
and $B_n := \lrc{\sup_{g\in\cG}\frac{1}{n}\sum_{t=1}^n \PEarg[t-1]{g(X_t)} \geq m_n}
$, so that $\PP[A_n^c] \leq 1/n^2$ by \cite[Lemma~A.7]{Cesa-Bianchi-lugosi-games}, and, by definition, $\varrho = \sum_{n\geq 1} \PP[B_n^c]$.  
Then, letting $G_n := \lrc{\cR_n \leq r_n}\cap\lrc{\cS_n \leq s_n}$, we easily get that, for any $n\geq 1$, we have, on $G_n \cap A_n \cap B_n$,
\begin{align*}
\log W_n^{\rm H} 
\geq \max_{\lambda \in \rset^d} \log L_n^{\rm H}(\lambda) - r_n 
\geq \frac{1}{2n}\lr{\sum_{t=1}^n g_t(X_t)}_+^2 - r_n 
&\geq \frac{1}{2n}\lr{\sum_{t=1}^n \PEarg[t-1]{g_t(X_t)}-2\sqrt{n\log(n)}}_+^2 - r_n \\
&\geq \frac{1}{2n}\lr{nm_n-s_n-2\sqrt{n\log(n)}}_+^2 - r_n \;.
\end{align*}
Hence, letting $E_n := \lrc{\log W_n^{\rm H} \geq u_n}$, we have $\PP[E_n^c\cap G_n \cap A_n \cap B_n] = 0$ for all $n\geq 1$, and therefore
$$
\sum_{n\geq 1} \PP[E_n^c] 
\leq \rho + \varsigma + \varrho + \sum_{n \geq 1} \PP[E_n^c \cap G_n \cap B_n] 
\leq \rho + \varsigma + \varrho + \sum_{n \geq 1} \PP[A_n^c] 
\leq \rho + \varsigma + \varrho + \frac{\pi^2}{6} \;,
$$
which  concludes the proof.
\end{proof}

\begin{proof}[\bf Proof of \Cref{thm:capital-g-bound}]
Let us denote $Z_t = g_t(X_t)$. Using the fact that $\log(1+x) \geq x-x^2$ for any $x \geq  -1/2$, we have, for all $n\geq 1$,
$\log(W_n) \geq \max_{\gamma \in \Gamma} \lr{\sum_{t=1}^n \gamma Z_t - \sum_{t=1}^n (\gamma Z_t)^2}  - \cR_n $.
Define for all $n\geq 1$ and $\gamma \in \Gamma$,
\begin{align*}
B_n(\gamma)&:= \lrc{\sum_{t=1}^n (\gamma Z_t - (\gamma Z_t)^2) \geq \sum_{t=1}^n \PEarg[t-1]{\gamma Z_t} - 4\sum_{t=1}^n \PEarg[t-1]{(\gamma Z_t)^2} - 4 \log(n)}\;,\\
C_n &:= \lrc{\sup_{g\in\cG}\frac{1}{n}\sum_{t=1}^n \PEarg[t-1]{g(X_t)^2} \leq v_n}\;, \\ 
D_n &:= \lrc{\sup_{g\in\cG}\frac{1}{n}\sum_{t=1}^n \PEarg[t-1]{g(X_t)} \geq m_n}\;,
\end{align*}
so that, by \Cref{lem:argument_capital} we have $\PP[B_n(\gamma)^c] \leq 2/n^2$ and $\varrho = \sum_{n\geq 1} \PP[(C_n\cap D_n)^c]$ for all $\gamma \in \Gamma$ and $n \geq 1$. Then, letting $G_n := \lrc{\cR_n \leq r_n}\cap\lrc{\cS_n \leq s_n}$, we have,  for any $\gamma \in [0,1/2] \subset \Gamma$ and $n\geq 1$, on  $G_n\cap C_n\cap D_n \cap  B_n(\gamma)$, 
\begin{align*}
\log(W_n) 
&\geq \gamma \sum_{t=1}^n \PEarg[t-1]{Z_t} - 4\gamma^2 \sum_{t=1}^n \PEarg[t-1]{Z_t^2}  - r_n - 4 \log(n) \\
&\geq \gamma \lr{\sup_{g\in\cG}\sum_{t=1}^n \PEarg[t-1]{g(X_t)} - s_n} - 4\gamma^2 \sup_{g\in\cG}\sum_{t=1}^n \PEarg[t-1]{g(X_t)^2}  - r_n - 4\log(n)  \\
&\geq \gamma  (nm_n - s_n) - 4\gamma^2 n v_n - r_n - 4\log(n) \;.
\end{align*}
Letting $\gamma_n^* := \frac{1}{2} \wedge \frac{(nm_n-s_n)_+}{8nv_n}$, we get that, on $G_n \cap C_n \cap D_n \cap B_n(\gamma_n^*)$, 
$$
\log(W_n) \geq 
\frac{(nm_n-s_n)_+}{4}\lr{1 \wedge \frac{(nm_n-s_n)_+}{4 n v_n} } - r_n  - 4\log(n) = u_n \;.
$$
Hence, letting $E_n := \lrc{\log W_n \geq u_n}$, we have $\PP[E_n^c \cap G_n \cap B_n(\gamma_n^*) \cap C_n \cap D_n] = 0$, for all $n \geq 1$ and therefore
$$
\sum_{n\geq 1} \PP[E_n^c] 
\leq \rho + \varrho + \varsigma + \sum_{n\geq 1} \PP[E_n^c \cap G_n \cap C_n \cap D_n] 
\leq \rho + \varrho + \varsigma  + \sum_{n\geq 1}  \PP[B_n(\gamma_n^*)^c] \leq 
\rho + \varrho + \varsigma + \pi^2/3 \;,
$$
which concludes the proof. 
\end{proof}
\end{document}